\author{Jaehyeok Lee, Jae-Suk Park$^*$}
\thanks{
The authors are grateful to Emily Riehl
and anonymous reviewers for their comments, suggestions and careful reading of our paper.
This paper is supported by Basic Science Research Institute Fund, whose NRF grant number is 2021R1A6A1A10042944.
}
\address{Department of Mathematics, POSTECH\\
77 Cheongam-Ro, Nam-Gu, Pohang, Gyeongbuk, Korea 37673
}
\title{Enriched Morita theory of monoids in a closed symmetric monoidal category}
\keywords{Morita theory, Eilenberg-Watts theorem, enriched category, tensorial strength}
\DeclareFontFamily{U}{dutchcal}{\skewchar\font=45 }
\DeclareFontShape{U}{dutchcal}{m}{n}{<-> s*[1.0] dutchcal-r}{}
\DeclareFontShape{U}{dutchcal}{b}{n}{<-> s*[1.0] dutchcal-b}{}
\DeclareMathAlphabet{\mathlcal}{U}{dutchcal}{m}{n}
\SetMathAlphabet{\mathlcal}{bold}{U}{dutchcal}{b}{n}
\DeclareSymbolFont{cmcal}{OMS}{cmsy}{m}{n}
\DeclareSymbolFontAlphabet{\mathcal}{cmcal} 
\DeclareMathAlphabet{\mathpzc}{OT1}{pzc}{m}{it}
\DeclareMathOperator{\I}{\mathbb{I}}
\DeclareMathOperator{\Obj}{Obj}
\DeclareMathOperator{\op}{op}
\DeclareMathOperator{\pr}{\prime}
\newtheorem{theorem}{Theorem}[section]
\newtheorem{lemma}[theorem]{Lemma}
\newtheorem{corollary}[theorem]{Corollary}
\newtheorem{proposition}[theorem]{Proposition}
\newtheorem{definition}[theorem]{Definition}
\newtheorem{example}[theorem]{Example}
\newtheorem{remark}[theorem]{Remark}
\numberwithin{equation}{section}
\begin{document}

\maketitle

\begin{abstract}
We develop Morita theory
of monoids in a closed symmetric monoidal category,
in the context of enriched category theory.
\end{abstract}

\section{Introduction}
Let $R$, $R'$ be rings.
The Eilenberg-Watts theorem \cite{Eilenberg1960}, \cite{Watts1960}
states that every cocontinuous functor
$\mathcal{F}:\mathit{Mod}_R\rightarrow\mathit{Mod}_{R'}$
between the categories of right modules
is naturally isomorphic to the functor
$-\otimes_R\text{ }\!\!_RM_{R'}$
of taking tensor product over $R$
for some $(R,R')$-bimodule $\text{ }\!\!_RM_{R'}$.
We say $R$, $R'$ are Morita equivalent if
we have an equivalence of categories between
$\mathit{Mod}_R$ and $\mathit{Mod}_{R'}$.
The main theorem of Morita theory \cite{Morita1958}
states that the following are equivalent:
\begin{itemize}
  \item
  Rings $R$, $R'$ are Morita equivalent;
  
  \item
  There exists a finitely generated projective generator
  $P_{R'}$ in $\mathit{Mod}_{R'}$
  together with an isomorphism of rings
  $R\cong \mathit{End}_{R'}(P_{R'})$;

  \item
  There exists an $(R,R')$-bimodule
  $\text{ }\!\!_RM_{R'}$
  and an $(R',R)$-bimodule
  $\text{ }\!\!_{R'}N_R$
  together with isomorphisms of bimodules
  $\text{ }\!\!_RM_{R'}\otimes_{R'}\text{ }\!\!_{R'}N_R\cong \text{ }\!\!_RR_R$
  and
  $\text{ }\!\!_{R'}N_R\otimes_R\text{ }\!\!_RM_{R'}
  \cong \text{ }\!\!_{R'}R'\text{ }\!\!\!\!_{R'}$.
\end{itemize}

We generalize these results
in the context of enriched category theory.
We begin by establishing the Eilenberg-Watts theorem in an enriched context.
We follow the approach introduced by Mark Hovey in \cite[\textsection 1-2]{Hovey2009}
using tensorial strengths of enriched functors between tensored enriched categories.
After establishing the Eilenberg-Watts theorem,
we provide a theorem which characterizes when an enriched category is equivalent to
the enriched category of right modules over the given monoid of the base category.
As a corollary, we obtain the main result of Morita in enriched context.

The base category that we consider in this paper is a closed symmetric monoidal category
$\mathlcal{C}=(\mathcal{C},\otimes,c,[-,-])$
whose underlying category $\mathcal{C}$ is finitely complete and finitely cocomplete.
Some examples are the closed symmetric monoidal categories
$\mathlcal{Set}/\mathlcal{fSet}/\mathlcal{sSet}$ of small sets/finite sets/simplicial sets,
$\mathlcal{Cat}$ of small categories,
$\mathlcal{Ab}/\mathlcal{fAb}$ of abelian groups/finitely generated abelian groups,
$\mathlcal{Vec}_K/\mathlcal{fVec}_K$ of vector spaces/finite dimensional vector spaces over a field $K$,
$\mathlcal{Mod}_R$/$\widehat{\mathlcal{Mod}}_R$/$\mathlcal{dgMod}_R$
of modules/$L$-complete modules/dg-modules over a commutative ring $R$,
$\mathlcal{CGT}$/$\mathlcal{CGT}_{\!\!*}$ of unbased/based compactly generated topological spaces, 
$\mathlcal{S\!p}^{\Sigma}_{\mathit{CGT}_{\!*}}$ of topological symmetric spectra,
$\mathlcal{CGWH}$/$\mathlcal{CGWH}_{\!\!*}$ of unbased/based compactly generated weakly Hausdorff spaces,
$\mathlcal{Ban}$ of Banach spaces with linear contractions.
Every elementary topos is also an example.

We explain our main ideas and results.
A monoid in $\mathlcal{C}$ is a triple $\mathfrak{b}=(b,u_b,m_b)$
where $b$ is an object in $\mathcal{C}$ and
$u_b$, $m_b$ are the unit, product morphisms in $\mathcal{C}$.
For each monoid $\mathfrak{b}$ in $\mathlcal{C}$,
we denote $\mathpzc{Mod}\!_{\mathfrak{b}}$
as the $\mathlcal{C}$-enriched category of right $\mathfrak{b}$-modules.
We can see $b$ as a right $\mathfrak{b}$-module
which we denote as $b_{\mathfrak{b}}$.
Let $\mathcal{D}$ be a tensored $\mathlcal{C}$-enriched category
whose underlying category $\mathcal{D}_0$ has coequalizers.
For each $\mathlcal{C}$-enriched functor
$\mathcal{F}:\mathpzc{Mod}\!_{\mathfrak{b}}\rightarrow \mathcal{D}$,
the object $\mathcal{F}(b_{\mathfrak{b}})$ in $\mathcal{D}$
is equipped with a left action of $\mathfrak{b}$,
and we have the $\mathlcal{C}$-enriched left adjoint functor
\begin{equation*}
  -\circledast_{\mathfrak{b}}\! \text{ }\!\!_{\mathfrak{b}}\mathcal{F}(b_{\mathfrak{b}})
  :
  \mathpzc{Mod}\!_{\mathfrak{b}}\rightarrow \mathcal{D}
\end{equation*}
of taking tensor product over $\mathfrak{b}$.
We show that there is a canonical $\mathlcal{C}$-enriched natural transformation
\begin{equation} \label{eq Intro lambdaF}
  \lambda^{\mathcal{F}}:
  \xymatrix@C=20pt{
    -\!\circledast_{\mathfrak{b}}\! \text{ }\!\!_{\mathfrak{b}}\mathcal{F}(b_{\mathfrak{b}})
    \ar@2{->}[r]
    &\mathcal{F}:\mathpzc{Mod}\!_{\mathfrak{b}}\rightarrow \mathcal{D}
  }
\end{equation}
associated to $\mathcal{F}:\mathpzc{Mod}\!_{\mathfrak{b}}\rightarrow\mathcal{D}$
(Lemma~\ref{lem EWthm lambdaF}).
This was defined in \cite[Proposition 1.1]{Hovey2009} as an ordinary natural transformation
when $\mathcal{D}=\mathpzc{Mod}\!_{\mathfrak{b}'}$ for another monoid $\mathfrak{b}'$ in $\mathlcal{C}$.
Moreover, we show that the following are equivalent
(Proposition~\ref{prop EWthm lambdaF}):
  \begin{itemize}
    \item
    $\mathcal{F}:\mathpzc{Mod}\!_{\mathfrak{b}}\rightarrow\mathcal{D}$ is a $\mathlcal{C}$-enriched left adjoint;

    \item
    $\mathcal{F}:\mathpzc{Mod}\!_{\mathfrak{b}}\rightarrow\mathcal{D}$
    is $\mathlcal{C}$-enriched cocontinuous;

    \item
    $\mathcal{F}:\mathpzc{Mod}\!_{\mathfrak{b}}\rightarrow\mathcal{D}$ 
    preserves $\mathlcal{C}$-tensors,
    i.e. its tensorial strength is invertible,
    and the underlying functor $\mathcal{F}_0$
    preserves coequalizers;

    \item
    The $\mathlcal{C}$-enriched natural transformation
    $\lambda^{\mathcal{F}}:-\circledast_{\mathfrak{b}}\text{ }\!\!_{\mathfrak{b}}\mathcal{F}(b_{\mathfrak{b}})\Rightarrow \mathcal{F}:\mathpzc{Mod}\!_{\mathfrak{b}}\rightarrow \mathcal{D}$
  in (\ref{eq Intro lambdaF})
    is invertible.
  \end{itemize}
Using this result, we prove the following generalization of the Eilenberg-Watts theorem.
Left $\mathfrak{b}$-module objects in $\mathcal{D}$
are introduced in \textsection\!~\ref{subsec LeftmoduleObj}.

\begin{theorem} \label{thm Intro EWthm}
  Let $\mathfrak{b}$ be a monoid in $\mathlcal{C}$
  and let $\mathcal{D}$ be a tensored $\mathlcal{C}$-enriched category
  whose underlying category $\mathcal{D}_0$ has coequalizers.
  We have a fully faithful left adjoint functor
  \begin{equation}\label{eq Intro EWthm}
    \xymatrix@C=20pt{
      \text{ }\!\!_{\mathfrak{b}}\mathcal{D}
      \ar[r]
      &{\mathlcal{C}\text{-}\mathit{Funct}(\mathpzc{Mod}\!_{\mathfrak{b}},\mathcal{D})}
    }
  \end{equation}
  from the category
  of left $\mathfrak{b}$-modules objects in $\mathcal{D}$
  to the category of $\mathlcal{C}$-enriched functors
  $\mathpzc{Mod}\!_{\mathfrak{b}}\rightarrow \mathcal{D}$.
  The essential image of the left adjoint functor (\ref{eq Intro EWthm}) is the coreflective full subcategory
  $\mathlcal{C}\text{-}\mathit{Funct}_{\mathit{cocon}}(\mathpzc{Mod}\!_{\mathfrak{b}},\mathcal{D})$
  of cocontinuous $\mathlcal{C}$-enriched functors
  $\mathpzc{Mod}\!_{\mathfrak{b}}\rightarrow \mathcal{D}$,
  and we have an adjoint equivalence of categories
  \begin{equation*}
    \xymatrix@C=30pt{
      \text{ }\!\!_{\mathfrak{b}}\mathcal{D}
      \ar@<0.5ex>[r]^-{\simeq}
      &{\mathlcal{C}\text{-}\mathit{Funct}_{\mathit{cocon}}(\mathpzc{Mod}\!_{\mathfrak{b}},\mathcal{D})}
      .
      \ar@<0.5ex>[l]^-{\simeq}
    }      
  \end{equation*}
  The coreflection of a $\mathlcal{C}$-enriched functor
  $\mathcal{F}:\mathpzc{Mod}\!_{\mathfrak{b}}\rightarrow\mathcal{D}$
  is the associated $\mathlcal{C}$-enriched natural transformation
  $\lambda^{\mathcal{F}}$
  in (\ref{eq Intro lambdaF}).
\end{theorem}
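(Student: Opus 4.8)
The plan is to identify the functor~(\ref{eq Intro EWthm}) with the assignment
$\Phi\colon\text{ }\!\!_{\mathfrak{b}}M\mapsto\bigl(-\circledast_{\mathfrak{b}}\text{ }\!\!_{\mathfrak{b}}M\bigr)$,
sending a left $\mathfrak{b}$-module object in $\mathcal{D}$ to the $\mathlcal{C}$-enriched functor of tensoring over $\mathfrak{b}$, and to exhibit as its right adjoint the evaluation functor $\mathrm{ev}\colon\mathcal{F}\mapsto\mathcal{F}(b_{\mathfrak{b}})$, where $\mathcal{F}(b_{\mathfrak{b}})$ carries the left $\mathfrak{b}$-action produced in Lemma~\ref{lem EWthm lambdaF}. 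Functoriality of $\Phi$ on morphisms is the functoriality of $-\circledast_{\mathfrak{b}}-$ in its second variable; functoriality of $\mathrm{ev}$ amounts to checking that the $b_{\mathfrak{b}}$-component of a $\mathlcal{C}$-enriched natural transformation $\mathcal{F}\Rightarrow\mathcal{F}'$ intertwines the two left $\mathfrak{b}$-actions, which is the naturality in $\mathcal{F}$ of the construction in Lemma~\ref{lem EWthm lambdaF}.

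Next I would establish the adjunction $\Phi\dashv\mathrm{ev}$. I take the counit at $\mathcal{F}$ to be $\lambda^{\mathcal{F}}$ of~(\ref{eq Intro lambdaF}), which is legitimate since $\Phi\bigl(\mathrm{ev}(\mathcal{F})\bigr)=-\circledast_{\mathfrak{b}}\text{ }\!\!_{\mathfrak{b}}\mathcal{F}(b_{\mathfrak{b}})$, and the unit at $\text{ }\!\!_{\mathfrak{b}}M$ to be the canonical isomorphism $\text{ }\!\!_{\mathfrak{b}}M\cong b_{\mathfrak{b}}\circledast_{\mathfrak{b}}\text{ }\!\!_{\mathfrak{b}}M=\mathrm{ev}\bigl(\Phi(\text{ }\!\!_{\mathfrak{b}}M)\bigr)$ expressing that $b_{\mathfrak{b}}$ is a unit for the tensor product over $\mathfrak{b}$; one then verifies the two triangle identities, the only nontrivial point being that the $b_{\mathfrak{b}}$-component of $\lambda^{\mathcal{F}}$ is the identity of $\mathcal{F}(b_{\mathfrak{b}})$ under this identification, which is read off from the construction of $\lambda^{\mathcal{F}}$. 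More conceptually, one recognizes $-\circledast_{\mathfrak{b}}\text{ }\!\!_{\mathfrak{b}}M$ as the $\mathlcal{C}$-enriched left Kan extension, along the inclusion of the full sub-$\mathlcal{C}$-category of $\mathpzc{Mod}\!_{\mathfrak{b}}$ on the single object $b_{\mathfrak{b}}$, of the left $\mathfrak{b}$-module object $\text{ }\!\!_{\mathfrak{b}}M$; the adjunction is then the Kan-extension/restriction adjunction, with counit $\lambda^{\mathcal{F}}$. Concretely this works because every right $\mathfrak{b}$-module is a reflexive coequalizer of $\mathlcal{C}$-tensors of $b_{\mathfrak{b}}$ (its bar presentation), so a $\mathlcal{C}$-enriched natural transformation out of the $\mathlcal{C}$-tensor-preserving functor $-\circledast_{\mathfrak{b}}\text{ }\!\!_{\mathfrak{b}}M$ is determined by its $b_{\mathfrak{b}}$-component, which may be prescribed to be an arbitrary morphism of left $\mathfrak{b}$-module objects $\text{ }\!\!_{\mathfrak{b}}M\to\mathcal{F}(b_{\mathfrak{b}})$. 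I expect this step, matching the explicit descriptions of $\lambda^{\mathcal{F}}$ and of $-\circledast_{\mathfrak{b}}-$ against this universal property, to be where the real work lies; everything else is formal.

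Since the unit of $\Phi\dashv\mathrm{ev}$ is a natural isomorphism, $\Phi$ is fully faithful. To identify its essential image, note first that each $-\circledast_{\mathfrak{b}}\text{ }\!\!_{\mathfrak{b}}M$ is a $\mathlcal{C}$-enriched left adjoint, hence $\mathlcal{C}$-enriched cocontinuous by Proposition~\ref{prop EWthm lambdaF}, so the essential image is contained in $\mathlcal{C}\text{-}\mathit{Funct}_{\mathit{cocon}}(\mathpzc{Mod}\!_{\mathfrak{b}},\mathcal{D})$. Conversely, if $\mathcal{F}$ is $\mathlcal{C}$-enriched cocontinuous then $\lambda^{\mathcal{F}}$ is invertible by Proposition~\ref{prop EWthm lambdaF}; since $\lambda^{\mathcal{F}}$ is the counit of $\Phi\dashv\mathrm{ev}$, the object $\mathcal{F}$ is isomorphic to $\Phi\bigl(\mathrm{ev}(\mathcal{F})\bigr)$ and hence lies in the essential image. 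Therefore the essential image of $\Phi$ is exactly $\mathlcal{C}\text{-}\mathit{Funct}_{\mathit{cocon}}(\mathpzc{Mod}\!_{\mathfrak{b}},\mathcal{D})$.

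Finally, the essential image of a fully faithful left adjoint is always a coreflective full subcategory, the coreflector being the right adjoint $\mathrm{ev}$ followed by the corestriction of $\Phi$, and the coreflection of $\mathcal{F}$ being the counit component at $\mathcal{F}$, which here is $\lambda^{\mathcal{F}}$; this yields the coreflective subcategory $\mathlcal{C}\text{-}\mathit{Funct}_{\mathit{cocon}}(\mathpzc{Mod}\!_{\mathfrak{b}},\mathcal{D})$ together with the stated description of its coreflections. Restricting $\Phi\dashv\mathrm{ev}$ along the inclusion $\mathlcal{C}\text{-}\mathit{Funct}_{\mathit{cocon}}(\mathpzc{Mod}\!_{\mathfrak{b}},\mathcal{D})\hookrightarrow\mathlcal{C}\text{-}\mathit{Funct}(\mathpzc{Mod}\!_{\mathfrak{b}},\mathcal{D})$, the functor $\Phi$ becomes essentially surjective and remains fully faithful, hence an equivalence of categories; being the left adjoint of an adjunction whose unit and counit are now both invertible, it is an adjoint equivalence with pseudo-inverse the restricted evaluation functor $\mathrm{ev}$. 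This is the asserted adjoint equivalence.
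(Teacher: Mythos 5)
Your proposal is correct and coincides with the paper's own (second, ``direct'') argument: the adjunction $\Phi\dashv\mathrm{ev}$ with unit $(\imath^{\mathfrak{b}}_{\text{ }\!\!_{\mathfrak{b}}X})^{-1}$ and counit $\lambda^{\mathcal{F}}$, the triangle identities via $\lambda^{\mathcal{F}}_{b_{\mathfrak{b}}}=\imath^{\mathfrak{b}}_{\text{ }\!\!_{\mathfrak{b}}\mathcal{F}(b_{\mathfrak{b}})}$, and the identification of the essential image through Proposition~\ref{prop EWthm lambdaF}. The left-Kan-extension remark is a nice conceptual gloss but is not needed, and the rest matches the paper step for step.
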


Let us explain why Theorem~\ref{thm Intro EWthm}
can be seen as a generalization of the Eilenberg-Watts theorem.
Given another monoid $\mathfrak{b}'$ in $\mathlcal{C}$,
we define a $(\mathfrak{b},\mathfrak{b}')$-bimodule
as a left $\mathfrak{b}$-module object in $\mathpzc{Mod}\!_{\mathfrak{b}'}$
(Definition~\ref{def LeftModuleObj bb'bimodule}).
After substituting $\mathcal{D}=\mathpzc{Mod}\!_{\mathfrak{b}'}$ in Theorem~\ref{thm Intro EWthm},
we obtain the following corollary.

\begin{corollary} \label{cor Intro EWthm}
  Let $\mathfrak{b}$, $\mathfrak{b}'$ be monoids in $\mathlcal{C}$.
  We have an adjoint equivalence of categories
  \begin{equation*}
    \xymatrix@C=30pt{
      \text{ }\!\!_{\mathfrak{b}}\mathpzc{Mod}\!_{\mathfrak{b}'}
      \ar@<0.5ex>[r]^-{\simeq}
      &{\mathlcal{C}\text{-}\mathit{Funct}_{\mathit{cocon}}(\mathpzc{Mod}\!_{\mathfrak{b}},\mathpzc{Mod}\!_{\mathfrak{b}'})}
      \ar@<0.5ex>[l]^-{\simeq}
    }
  \end{equation*}
  from the category
  of $(\mathfrak{b},\mathfrak{b}')$-bimodules
  to the category of cocontinuous $\mathlcal{C}$-enriched functors 
  $\mathpzc{Mod}\!_{\mathfrak{b}}\rightarrow\mathpzc{Mod}\!_{\mathfrak{b}'}$.
\end{corollary}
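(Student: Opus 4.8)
The plan is to obtain Corollary~\ref{cor Intro EWthm} directly from Theorem~\ref{thm Intro EWthm} by specializing $\mathcal{D}=\mathpzc{Mod}\!_{\mathfrak{b}'}$. The only work is to verify that this $\mathcal{D}$ satisfies the two hypotheses of the theorem: that $\mathpzc{Mod}\!_{\mathfrak{b}'}$ is a tensored $\mathlcal{C}$-enriched category, and that its underlying category $(\mathpzc{Mod}\!_{\mathfrak{b}'})_0$ has coequalizers. For tensoredness I would recall, from the construction of $\mathpzc{Mod}\!_{\mathfrak{b}'}$, that for an object $c$ of $\mathcal{C}$ and a right $\mathfrak{b}'$-module $M$ the tensor $c\circledast M$ is the object $c\otimes M$ equipped with the right $\mathfrak{b}'$-action obtained by tensoring the action of $M$ with $\mathrm{id}_c$, and that the required isomorphism $\mathpzc{Mod}\!_{\mathfrak{b}'}(c\circledast M,N)\cong[c,\mathpzc{Mod}\!_{\mathfrak{b}'}(M,N)]$ in $\mathcal{C}$ follows from the closed symmetric monoidal structure of $\mathlcal{C}$ together with the description of hom-objects of $\mathpzc{Mod}\!_{\mathfrak{b}'}$ as equalizers in $\mathcal{C}$.

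For the existence of coequalizers in $(\mathpzc{Mod}\!_{\mathfrak{b}'})_0$, I would argue that the forgetful functor $(\mathpzc{Mod}\!_{\mathfrak{b}'})_0\rightarrow\mathcal{C}$ creates them. Given a parallel pair of $\mathfrak{b}'$-module morphisms, form its coequalizer $q$ in $\mathcal{C}$, which exists since $\mathcal{C}$ is finitely cocomplete; because $\mathlcal{C}$ is closed, the endofunctor $-\otimes b'$ is a left adjoint and hence preserves this coequalizer, so $q\otimes b'$ is again a coequalizer, and a routine diagram chase then produces a unique right $\mathfrak{b}'$-action on the codomain of $q$ making $q$ a morphism of right $\mathfrak{b}'$-modules and a coequalizer there. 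With both hypotheses checked, Theorem~\ref{thm Intro EWthm} yields a fully faithful left adjoint $\text{ }\!\!_{\mathfrak{b}}\mathpzc{Mod}\!_{\mathfrak{b}'}\rightarrow\mathlcal{C}\text{-}\mathit{Funct}(\mathpzc{Mod}\!_{\mathfrak{b}},\mathpzc{Mod}\!_{\mathfrak{b}'})$ whose essential image is the coreflective full subcategory of cocontinuous $\mathlcal{C}$-enriched functors, hence an adjoint equivalence onto $\mathlcal{C}\text{-}\mathit{Funct}_{\mathit{cocon}}(\mathpzc{Mod}\!_{\mathfrak{b}},\mathpzc{Mod}\!_{\mathfrak{b}'})$.

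Finally I would unravel the identification of the source category: by Definition~\ref{def LeftModuleObj bb'bimodule} a $(\mathfrak{b},\mathfrak{b}')$-bimodule is precisely a left $\mathfrak{b}$-module object in $\mathpzc{Mod}\!_{\mathfrak{b}'}$, so the category $\text{ }\!\!_{\mathfrak{b}}\mathcal{D}$ appearing in Theorem~\ref{thm Intro EWthm} with $\mathcal{D}=\mathpzc{Mod}\!_{\mathfrak{b}'}$ is literally $\text{ }\!\!_{\mathfrak{b}}\mathpzc{Mod}\!_{\mathfrak{b}'}$, and substituting gives the stated equivalence. I do not expect any real obstacle here; the only mildly delicate point is the verification that coequalizers in $\mathpzc{Mod}\!_{\mathfrak{b}'}$ are created by the forgetful functor, and even that reduces to the fact that $-\otimes b'$ preserves colimits. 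If tensoredness of $\mathpzc{Mod}\!_{\mathfrak{b}'}$ and cocompleteness of its underlying category have already been recorded when $\mathpzc{Mod}\!_{\mathfrak{b}}$ was introduced, the corollary is immediate.
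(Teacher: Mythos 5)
Your proposal is correct and follows exactly the paper's route: specialize Theorem~\ref{thm Intro EWthm} to $\mathcal{D}=\mathpzc{Mod}\!_{\mathfrak{b}'}$ and identify left $\mathfrak{b}$-module objects there with $(\mathfrak{b},\mathfrak{b}')$-bimodules via Definition~\ref{def LeftModuleObj bb'bimodule}. The hypothesis checks you carry out (tensoredness and existence of coequalizers in $(\mathpzc{Mod}\!_{\mathfrak{b}'})_0$, the latter created by the forgetful functor since $-\otimes b'$ is a left adjoint) are already recorded in \textsection\ref{subsec TenEnCat}, so the paper's proof is a one-line substitution.
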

The original Eilenberg-Watts theorem \cite{Eilenberg1960}, \cite{Watts1960}
states that the functor from left to right in Corollary~\ref{cor Intro EWthm}
is essentially surjective when $\mathlcal{C}=\mathlcal{Ab}$.
This has been generalized to the situation where the target category is a
general tensored $\mathlcal{Ab}$-enriched category by Nyman and Smith \cite{Nyman2008}.
The main result of their article is precisely our Theorem~\ref{thm Intro EWthm}
in the special case $\mathlcal{C}=\mathlcal{Ab}$.
We mention that Corollary~\ref{cor Intro EWthm}
has been discussed online
when $\mathlcal{C}$ is a B\'{e}nabou cosmos.
\footnote{
See
https://mathoverflow.net/questions/159735/in-what-generality-does-eilenberg-watts-hold
and
https://ncatlab.org/nlab/show/Eilenberg-Watts+theorem
for the discussions of Corollary~\ref{cor Intro EWthm}
over a B\'{e}nabou cosmos $\mathlcal{C}$.
}

In the original Eilenberg-Watts theorem, we only assume the cocontinuity of the underlying functor
(i.e., preservation of sums and coequalizers).
In a general $\mathlcal{C}$-enriched setting this is not enough, and we use preservation of $\mathlcal{C}$-tensors which is a more restrictive condition than preservation of sums.
The reason why the weaker assumption is enough in the case of $\mathlcal{C}=\mathlcal{Ab}$
is the following special property of abelian module categories:
any natural transformation between cocontinuous functors out of an abelian module category is invertible
as soon as it is invertible at a projective generator.

Next, we characterize when a $\mathlcal{C}$-enriched category $\mathcal{D}$
is equivalent to $\mathpzc{Mod}\!_{\mathfrak{b}}$.
We say an object $X$ in a $\mathlcal{C}$-enriched category $\mathcal{D}$
is a \emph{$\mathlcal{C}$-enriched compact generator} if 
the $\mathlcal{C}$-enriched Hom functor $\mathcal{D}(X,-):\mathcal{D}\rightarrow\mathcal{C}$
is conservative, preserves $\mathlcal{C}$-tensors
and the underlying functor $\mathcal{D}(X,-)_0$ preserves coequalizers
(Definition~\ref{def Morita Enrichedcompactgen}).

\begin{theorem} \label{thm Intro CharacterizingMb}
  Let $\mathfrak{b}$ be a monoid in $\mathlcal{C}$,
  and let $\mathcal{D}$ be a tensored $\mathlcal{C}$-enriched category
  whose underlying category $\mathcal{D}_0$ has coequalizers.
  Then $\mathcal{D}$ is equivalent to $\mathpzc{Mod}\!_{\mathfrak{b}}$
  as $\mathlcal{C}$-enriched categories if and only if
  there exists a $\mathlcal{C}$-enriched compact generator $X$ in $\mathcal{D}$
  inducing an isomorphism of monoids $f:\mathfrak{b}\cong \mathit{End}_{\mathcal{D}}(X)$ in $\mathlcal{C}$.
\end{theorem}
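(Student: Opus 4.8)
The plan is to treat the two implications separately; the forward one is routine and the reverse one carries the content. For the forward direction, suppose $\Phi\colon\mathpzc{Mod}\!_{\mathfrak{b}}\to\mathcal{D}$ is an equivalence of $\mathlcal{C}$-enriched categories and put $X:=\Phi(b_{\mathfrak{b}})$. Then $\mathcal{D}(X,-)$ is $\mathlcal{C}$-naturally isomorphic to $\mathpzc{Mod}\!_{\mathfrak{b}}(b_{\mathfrak{b}},-)\circ\Phi^{-1}$, and $\mathit{End}_{\mathcal{D}}(X)\cong\mathit{End}_{\mathpzc{Mod}\!_{\mathfrak{b}}}(b_{\mathfrak{b}})$ as monoids. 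It therefore suffices to recall two standard facts about the free right module $b_{\mathfrak{b}}$: first, $\mathpzc{Mod}\!_{\mathfrak{b}}(b_{\mathfrak{b}},-)$ is isomorphic to the underlying-object functor $\mathpzc{Mod}\!_{\mathfrak{b}}\to\mathlcal{C}$, which is conservative, preserves $\mathlcal{C}$-tensors, and whose underlying functor preserves coequalizers, so that $X$ is a $\mathlcal{C}$-enriched compact generator of $\mathcal{D}$; second, $\mathit{End}_{\mathpzc{Mod}\!_{\mathfrak{b}}}(b_{\mathfrak{b}})\cong\mathfrak{b}$ as monoids, which provides the required $f$.

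For the reverse direction, let $X$ be a $\mathlcal{C}$-enriched compact generator of $\mathcal{D}$ together with a monoid isomorphism $f\colon\mathfrak{b}\cong\mathit{End}_{\mathcal{D}}(X)$. Since $\mathcal{D}$ is tensored over $\mathlcal{C}$, a left $\mathfrak{b}$-action on $X$ is precisely a monoid morphism $\mathfrak{b}\to\mathit{End}_{\mathcal{D}}(X)$; I take the one given by $f$, making $X$ into a left $\mathfrak{b}$-module object $\text{ }\!\!_{\mathfrak{b}}X$ in $\mathcal{D}$. By Theorem~\ref{thm Intro EWthm}, $\text{ }\!\!_{\mathfrak{b}}X$ corresponds to a cocontinuous $\mathlcal{C}$-enriched functor $\mathcal{F}:=-\circledast_{\mathfrak{b}}\text{ }\!\!_{\mathfrak{b}}X\colon\mathpzc{Mod}\!_{\mathfrak{b}}\to\mathcal{D}$ with $\mathcal{F}(b_{\mathfrak{b}})\cong\text{ }\!\!_{\mathfrak{b}}X$; being cocontinuous it is a $\mathlcal{C}$-enriched left adjoint (Proposition~\ref{prop EWthm lambdaF}), and its $\mathlcal{C}$-enriched right adjoint is identified, via the tensor--hom adjunction over $\mathfrak{b}$, with $G:=\mathcal{D}(X,-)\colon\mathcal{D}\to\mathpzc{Mod}\!_{\mathfrak{b}}$, where $\mathcal{D}(X,Y)$ carries the right $\mathfrak{b}$-action induced by $f$. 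I claim that the unit $\eta\colon\mathrm{id}\Rightarrow G\mathcal{F}$ and the counit $\varepsilon\colon\mathcal{F}G\Rightarrow\mathrm{id}$ of $\mathcal{F}\dashv G$ are invertible; then $\mathcal{F}$ is the desired equivalence $\mathpzc{Mod}\!_{\mathfrak{b}}\simeq\mathcal{D}$.

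The invertibility of $\eta$ is the main point and the only place the hypothesis on $f$ enters. Both $\mathrm{id}$ and $G\mathcal{F}$ are $\mathlcal{C}$-enriched endofunctors of $\mathpzc{Mod}\!_{\mathfrak{b}}$, and $G\mathcal{F}$ is cocontinuous: it preserves $\mathlcal{C}$-tensors and $(G\mathcal{F})_0$ preserves coequalizers, since $\mathcal{F}$ does by cocontinuity and $G$ does by the compact-generator hypothesis (using that $\mathlcal{C}$-tensors and coequalizers in $\mathpzc{Mod}\!_{\mathfrak{b}}$ are created from $\mathlcal{C}$), so Proposition~\ref{prop EWthm lambdaF} applies to both. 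Evaluating at $b_{\mathfrak{b}}$ gives $G\mathcal{F}(b_{\mathfrak{b}})\cong\mathcal{D}(X,X)=\mathit{End}_{\mathcal{D}}(X)$, and because $\mathpzc{Mod}\!_{\mathfrak{b}}(b_{\mathfrak{b}},-)$ is the underlying-object functor, $\eta_{b_{\mathfrak{b}}}$ is the unique right $\mathfrak{b}$-module morphism $b_{\mathfrak{b}}\to\mathit{End}_{\mathcal{D}}(X)$ carrying the unit of $\mathfrak{b}$ to $\mathrm{id}_X$, which is exactly the underlying morphism of $f$ and hence an isomorphism. By Theorem~\ref{thm Intro EWthm}, evaluation at $b_{\mathfrak{b}}$ is an equivalence from $\mathlcal{C}\text{-}\mathit{Funct}_{\mathit{cocon}}(\mathpzc{Mod}\!_{\mathfrak{b}},\mathpzc{Mod}\!_{\mathfrak{b}})$ to the category of left $\mathfrak{b}$-module objects in $\mathpzc{Mod}\!_{\mathfrak{b}}$; an equivalence reflects isomorphisms, and a morphism of left $\mathfrak{b}$-module objects is invertible as soon as its underlying morphism is, so $\eta$ is invertible.

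Finally, the counit follows formally. For each $Y\in\mathcal{D}$ the triangle identity $G(\varepsilon_Y)\circ\eta_{G(Y)}=\mathrm{id}_{G(Y)}$ and the invertibility of $\eta_{G(Y)}$ show that $G(\varepsilon_Y)$ is invertible; its underlying morphism $\mathcal{D}(X,\varepsilon_Y)$ is then an isomorphism in $\mathlcal{C}$, and conservativity of $\mathcal{D}(X,-)$ forces $\varepsilon_Y$ to be an isomorphism in $\mathcal{D}$. Hence $\varepsilon$ is invertible, so $\mathcal{F}\dashv G$ is an adjoint equivalence and $\mathcal{D}\simeq\mathpzc{Mod}\!_{\mathfrak{b}}$ as $\mathlcal{C}$-enriched categories. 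The hard part is thus the invertibility of the unit, where one must combine the Eilenberg--Watts rigidity of Theorem~\ref{thm Intro EWthm} with the identification $\eta_{b_{\mathfrak{b}}}=f$; the counit, and with it the essential surjectivity and full faithfulness of $\mathcal{F}$, is then a purely formal consequence of conservativity. (Alternatively, the reverse direction can be packaged as an enriched monadicity statement for $\mathcal{D}(X,-)$.)
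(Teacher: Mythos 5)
Your proposal is correct and follows essentially the same route as the paper: the forward direction via the fact that $b_{\mathfrak{b}}$ is a $\mathlcal{C}$-enriched compact generator of $\mathpzc{Mod}\!_{\mathfrak{b}}$ with $\mathit{End}_{\mathpzc{Mod}\!_{\mathfrak{b}}}(b_{\mathfrak{b}})\cong\mathfrak{b}$, and the reverse direction by equipping $X$ with the left $\mathfrak{b}$-action induced by $f$, forming the adjunction $-\circledast_{\mathfrak{b}}\text{ }\!\!_{\mathfrak{b}}X\dashv\mathcal{D}(\text{ }\!\!_{\mathfrak{b}}X,-)$, reducing invertibility of the unit to its component at $b_{\mathfrak{b}}$ via the Eilenberg--Watts equivalence, and deducing invertibility of the counit from the triangle identities and conservativity of $\mathcal{D}(X,-)$. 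The only cosmetic difference is that you identify $\eta_{b_{\mathfrak{b}}}$ with $f$ by a freeness/uniqueness argument where the paper performs an explicit diagram chase showing $\eta_{b_{\mathfrak{b}}}=(\imath^{\mathfrak{b}}_{\text{ }\!\!_{\mathfrak{b}}X})^{-1}_{\star}\circ f$.
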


Using Theorem~\ref{thm Intro EWthm} and Theorem~\ref{thm Intro CharacterizingMb},
we establish the main theorem of Morita theory in enriched context.
We say monoids $\mathfrak{b}$ and $\mathfrak{b}'$ in $\mathlcal{C}$ are \emph{Morita equivalent}
if $\mathpzc{Mod}\!_{\mathfrak{b}}$ and $\mathpzc{Mod}\!_{\mathfrak{b}'}$
are equivalent as $\mathlcal{C}$-enriched categories.

\begin{corollary} \label{cor Intro CosMorita}
  Let $\mathfrak{b}$, $\mathfrak{b}'$ be monoids in $\mathlcal{C}$.
  The following are equivalent:
  \begin{itemize}
    \item[(i)]
    Monoids $\mathfrak{b}$, $\mathfrak{b}'$ in $\mathlcal{C}$ are Morita equivalent;

    \item[(ii)]
    There exists a $\mathlcal{C}$-enriched compact generator $x_{\mathfrak{b}'}$ in $\mathpzc{Mod}\!_{\mathfrak{b}'}$
    together with an isomorphism of monoids $\text{$\mathfrak{b}$}\cong \mathit{End}_{\mathpzc{Mod}\!_{\mathfrak{b}'}}(x_{\mathfrak{b}'})$
    in $\mathlcal{C}$;

    \item[(iii)]
    There exists a $(\mathfrak{b},\mathfrak{b}')$-bimodule $\text{ }\!\!_{\mathfrak{b}}x_{\mathfrak{b}'}$
    and a $(\mathfrak{b}',\mathfrak{b})$-bimodule $\text{ }\!\!_{\mathfrak{b}'}y_{\mathfrak{b}}$
    together with isomorphisms of bimodules
    $\text{ }\!\!_{\mathfrak{b}}x_{\mathfrak{b}'}\circledast_{\mathfrak{b}'}\text{ }\!\!_{\mathfrak{b}'}y_{\mathfrak{b}}
    \cong\text{ }\!\!_{\mathfrak{b}}b_{\mathfrak{b}}$
    and
    $\text{ }\!\!_{\mathfrak{b}'}y_{\mathfrak{b}}\circledast_{\mathfrak{b}}\text{ }\!\!_{\mathfrak{b}}x_{\mathfrak{b}'}
    \cong\text{ }\!\!_{\mathfrak{b}'}b'\text{ }\!\!\!\!_{\mathfrak{b}'}$.
  \end{itemize}
\end{corollary}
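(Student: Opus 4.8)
The plan is to deduce all three equivalences from the results already in hand. The equivalence $(i)\Leftrightarrow(ii)$ is nothing but Theorem~\ref{thm Intro CharacterizingMb} applied to $\mathcal{D}=\mathpzc{Mod}\!_{\mathfrak{b}'}$: this category is tensored and its underlying category has coequalizers, so the theorem applies and says that $\mathpzc{Mod}\!_{\mathfrak{b}'}$ is equivalent to $\mathpzc{Mod}\!_{\mathfrak{b}}$ as $\mathlcal{C}$-enriched categories --- which by definition is the assertion that $\mathfrak{b}$ and $\mathfrak{b}'$ are Morita equivalent --- if and only if there is a $\mathlcal{C}$-enriched compact generator $x_{\mathfrak{b}'}$ in $\mathpzc{Mod}\!_{\mathfrak{b}'}$ inducing an isomorphism of monoids $\mathfrak{b}\cong\mathit{End}_{\mathpzc{Mod}\!_{\mathfrak{b}'}}(x_{\mathfrak{b}'})$ in $\mathlcal{C}$. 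It then remains to establish $(i)\Leftrightarrow(iii)$, which I will run through the enriched Eilenberg--Watts equivalence of Corollary~\ref{cor Intro EWthm}, recalling that the functor of Theorem~\ref{thm Intro EWthm} sends a left $\mathfrak{b}$-module object $\text{ }\!\!_{\mathfrak{b}}M$ in $\mathcal{D}$ to the cocontinuous functor $-\circledast_{\mathfrak{b}}\text{ }\!\!_{\mathfrak{b}}M$.

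For $(i)\Rightarrow(iii)$: let $\mathcal{F}:\mathpzc{Mod}\!_{\mathfrak{b}}\rightarrow\mathpzc{Mod}\!_{\mathfrak{b}'}$ be a $\mathlcal{C}$-enriched equivalence with quasi-inverse $\mathcal{G}:\mathpzc{Mod}\!_{\mathfrak{b}'}\rightarrow\mathpzc{Mod}\!_{\mathfrak{b}}$. Both functors are $\mathlcal{C}$-enriched left adjoints, hence cocontinuous, so Corollary~\ref{cor Intro EWthm} produces a $(\mathfrak{b},\mathfrak{b}')$-bimodule $\text{ }\!\!_{\mathfrak{b}}x_{\mathfrak{b}'}:=\text{ }\!\!_{\mathfrak{b}}\mathcal{F}(b_{\mathfrak{b}})$ and a $(\mathfrak{b}',\mathfrak{b})$-bimodule $\text{ }\!\!_{\mathfrak{b}'}y_{\mathfrak{b}}:=\text{ }\!\!_{\mathfrak{b}'}\mathcal{G}(b'_{\mathfrak{b}'})$ with $\mathlcal{C}$-enriched natural isomorphisms $\mathcal{F}\cong-\circledast_{\mathfrak{b}}\text{ }\!\!_{\mathfrak{b}}x_{\mathfrak{b}'}$ and $\mathcal{G}\cong-\circledast_{\mathfrak{b}'}\text{ }\!\!_{\mathfrak{b}'}y_{\mathfrak{b}}$. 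Composing and using the associativity of the tensor product over a monoid, $\mathcal{G}\mathcal{F}\cong(-\circledast_{\mathfrak{b}}\text{ }\!\!_{\mathfrak{b}}x_{\mathfrak{b}'})\circledast_{\mathfrak{b}'}\text{ }\!\!_{\mathfrak{b}'}y_{\mathfrak{b}}\cong-\circledast_{\mathfrak{b}}(\text{ }\!\!_{\mathfrak{b}}x_{\mathfrak{b}'}\circledast_{\mathfrak{b}'}\text{ }\!\!_{\mathfrak{b}'}y_{\mathfrak{b}})$ as $\mathlcal{C}$-enriched functors $\mathpzc{Mod}\!_{\mathfrak{b}}\rightarrow\mathpzc{Mod}\!_{\mathfrak{b}}$, while $\mathrm{Id}_{\mathpzc{Mod}\!_{\mathfrak{b}}}\cong-\circledast_{\mathfrak{b}}\text{ }\!\!_{\mathfrak{b}}b_{\mathfrak{b}}$ by the unit law. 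Since $\mathcal{G}\mathcal{F}\cong\mathrm{Id}_{\mathpzc{Mod}\!_{\mathfrak{b}}}$ and both of these functors lie in the essential image of the fully faithful functor of Theorem~\ref{thm Intro EWthm} applied with $\mathcal{D}=\mathpzc{Mod}\!_{\mathfrak{b}}$ (whose left $\mathfrak{b}$-module objects are exactly the $(\mathfrak{b},\mathfrak{b})$-bimodules), full faithfulness upgrades this to an isomorphism of bimodules $\text{ }\!\!_{\mathfrak{b}}x_{\mathfrak{b}'}\circledast_{\mathfrak{b}'}\text{ }\!\!_{\mathfrak{b}'}y_{\mathfrak{b}}\cong\text{ }\!\!_{\mathfrak{b}}b_{\mathfrak{b}}$. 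The same argument applied to $\mathcal{F}\mathcal{G}\cong\mathrm{Id}_{\mathpzc{Mod}\!_{\mathfrak{b}'}}$ yields $\text{ }\!\!_{\mathfrak{b}'}y_{\mathfrak{b}}\circledast_{\mathfrak{b}}\text{ }\!\!_{\mathfrak{b}}x_{\mathfrak{b}'}\cong\text{ }\!\!_{\mathfrak{b}'}b'_{\mathfrak{b}'}$, which is $(iii)$.

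For $(iii)\Rightarrow(i)$: given bimodules $\text{ }\!\!_{\mathfrak{b}}x_{\mathfrak{b}'}$, $\text{ }\!\!_{\mathfrak{b}'}y_{\mathfrak{b}}$ and the two isomorphisms of $(iii)$, the functors $-\circledast_{\mathfrak{b}}\text{ }\!\!_{\mathfrak{b}}x_{\mathfrak{b}'}:\mathpzc{Mod}\!_{\mathfrak{b}}\rightarrow\mathpzc{Mod}\!_{\mathfrak{b}'}$ and $-\circledast_{\mathfrak{b}'}\text{ }\!\!_{\mathfrak{b}'}y_{\mathfrak{b}}:\mathpzc{Mod}\!_{\mathfrak{b}'}\rightarrow\mathpzc{Mod}\!_{\mathfrak{b}}$ are $\mathlcal{C}$-enriched left adjoints, and combining associativity of the tensor product over a monoid, the unit law $-\circledast_{\mathfrak{b}}\text{ }\!\!_{\mathfrak{b}}b_{\mathfrak{b}}\cong\mathrm{Id}_{\mathpzc{Mod}\!_{\mathfrak{b}}}$, and the isomorphisms of $(iii)$, one obtains $\mathlcal{C}$-enriched natural isomorphisms $(-\circledast_{\mathfrak{b}}\text{ }\!\!_{\mathfrak{b}}x_{\mathfrak{b}'})\circledast_{\mathfrak{b}'}\text{ }\!\!_{\mathfrak{b}'}y_{\mathfrak{b}}\cong\mathrm{Id}_{\mathpzc{Mod}\!_{\mathfrak{b}}}$ and, symmetrically, $(-\circledast_{\mathfrak{b}'}\text{ }\!\!_{\mathfrak{b}'}y_{\mathfrak{b}})\circledast_{\mathfrak{b}}\text{ }\!\!_{\mathfrak{b}}x_{\mathfrak{b}'}\cong\mathrm{Id}_{\mathpzc{Mod}\!_{\mathfrak{b}'}}$. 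Hence these two functors constitute a $\mathlcal{C}$-enriched adjoint equivalence, so $\mathpzc{Mod}\!_{\mathfrak{b}}$ and $\mathpzc{Mod}\!_{\mathfrak{b}'}$ are equivalent as $\mathlcal{C}$-enriched categories, i.e. $(i)$ holds.

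The part of this that requires genuine work is the coherence of the tensor product over a monoid in the enriched setting: the unit isomorphism $-\circledast_{\mathfrak{b}}\text{ }\!\!_{\mathfrak{b}}b_{\mathfrak{b}}\cong\mathrm{Id}_{\mathpzc{Mod}\!_{\mathfrak{b}}}$ and, for a $(\mathfrak{b},\mathfrak{b}')$-bimodule $M$ and a $(\mathfrak{b}',\mathfrak{b}'')$-bimodule $N$, the associativity isomorphism $(-\circledast_{\mathfrak{b}}M)\circledast_{\mathfrak{b}'}N\cong-\circledast_{\mathfrak{b}}(M\circledast_{\mathfrak{b}'}N)$ of $\mathlcal{C}$-enriched functors, natural in the argument and compatible with the canonical transformations $\lambda$ of $(\ref{eq Intro lambdaF})$. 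These are the expected coherences, proved by the usual computations with the defining coequalizers once the induced bimodule structures are written down; the only bookkeeping that is new relative to the classical case is the tracking of tensorial strengths, which is supplied by the machinery established before Theorem~\ref{thm Intro EWthm}. Everything else in the proof is formal.
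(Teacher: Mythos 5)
Your proposal is correct and follows essentially the same route as the paper: (i)$\Leftrightarrow$(ii) by specializing Theorem~\ref{thm Intro CharacterizingMb} to $\mathcal{D}=\mathpzc{Mod}\!_{\mathfrak{b}'}$, and (i)$\Leftrightarrow$(iii) by transporting the equivalence through Corollary~\ref{cor Intro EWthm} together with the unit and associativity isomorphisms for $\circledast_{\mathfrak{b}}$ (which the paper constructs explicitly as (\ref{eq Morita jmathb}) and (\ref{eq Morita associatorisom}) just before its proof, exactly the coherences you defer to routine coequalizer computations). Your explicit appeal to full faithfulness to upgrade the natural isomorphism $\mathcal{G}\mathcal{F}\cong\mathrm{Id}$ to an isomorphism of bimodules is precisely what the paper's invocation of Corollary~\ref{cor Intro EWthm} as an equivalence of categories accomplishes implicitly.
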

If we consider $\mathlcal{C}=\mathlcal{Ab}$
in Corollary~\ref{cor Intro CosMorita},
we recover the original result of Morita.

\section{Enriched Categories}\label{sec EnCat}
We fix a closed symmetric monoidal category
$\mathlcal{C}=(\mathcal{C},\otimes,c,[-,-])$
whose underlying category $\mathcal{C}$ is finitely complete and finitely cocomplete.
We denote objects in $\mathcal{C}$ with small letters.
Let $z$, $x$, $y$ be objects in $\mathcal{C}$.
We have the functor $\otimes:\mathcal{C}\times\mathcal{C}\rightarrow \mathcal{C}$
and the unit object $c$ in $\mathcal{C}$,
together with coherence isomorphisms
\begin{equation}\label{eq EnCat asij}
  \begin{aligned}
    a_{z,x,y}
    &:
    \xymatrix@C=18pt{
      z\!\otimes\! (x\!\otimes\! y)
      \ar[r]^-{\cong}
      &(z\!\otimes\! x)\!\otimes\! y      
      ,
    }
    \\
    s_{x,y}
    &:
    \xymatrix@C=18pt{
      x\!\otimes\! y
      \ar[r]^-{\cong}
      &y\!\otimes\! x
      ,      
    }
  \end{aligned}
  \qquad\quad
  \begin{aligned}
    \imath_x
    &:
    \xymatrix@C=18pt{
      c\!\otimes\! x
      \ar[r]^-{\cong}
      &x
      ,      
    }
    \\
    \jmath_x
    &:
    \xymatrix@C=18pt{
      x\!\otimes\! c
      \ar[r]^-{\cong}
      &x
    }
  \end{aligned}
\end{equation}
in $\mathcal{C}$ that are natural in variables $z$, $x$, $y$.
For each object $x$ in $\mathcal{C}$,
the functor $-\otimes x:\mathcal{C}\rightarrow\mathcal{C}$
admits a right adjoint $[x,-]:\mathcal{C}\rightarrow\mathcal{C}$
and we have a functor $[-,-]:\mathcal{C}^{\op}\times\mathcal{C}\rightarrow\mathcal{C}$.

We refer \cite{Borceux1994}, \cite{Kelly2005} for the basics of enriched category theory.
Let $\mathcal{D}$ be a $\mathlcal{C}$-enriched category and let $X$, $Y$, $Z$ be objects in $\mathcal{D}$.
We denote $\mathcal{D}(X,Y)$ as the Hom object and
  $\I_X:c\rightarrow \mathcal{D}(X,X)$,
  $\mu_{X,Y,Z}:\mathcal{D}(Y,Z)\otimes \mathcal{D}(X,Y)\rightarrow \mathcal{D}(X,Z)$
as the identity, composition morphisms in $\mathcal{C}$.
We denote $\mathcal{D}_0$ as the underlying category of $\mathcal{D}$.
A morphism $X\rightarrow Y$ in $\mathcal{D}$ means
a morphism from $X$ to $Y$ in the underlying category $\mathcal{D}_0$ of $\mathcal{D}$.
We denote $\I_X:X\xrightarrow{\cong}X$
as the identity morphism $\I_X:c\rightarrow \mathcal{D}(X,X)$ of $X$ in $\mathcal{D}$.
For each morphism $l:X\rightarrow Y$ in $\mathcal{D}$,
we have morphisms
$l_{\star}:\mathcal{D}(Z,X)\rightarrow\mathcal{D}(Z,Y)$
and
$l^{\star}:\mathcal{D}(Y,Z)\rightarrow\mathcal{D}(X,Z)$
in $\mathcal{C}$.

The category $\mathcal{C}$ has a canonical $\mathlcal{C}$-enriched category structure
whose Hom objects are given by
$\mathcal{C}(x,y)=[x,y]$.
We identify the underlying category of the $\mathlcal{C}$-enriched category $\mathcal{C}$
with the original category $\mathcal{C}$.

Let $\mathcal{D}'$ be another $\mathlcal{C}$-enriched category.
For each $\mathlcal{C}$-enriched functor
$\alpha:\mathcal{D}\rightarrow\mathcal{D}'$,
we have the underlying functor
$\alpha_0:\mathcal{D}_0\to \mathcal{D}'_0$
and we denote 
$\alpha_{X,Y}:\mathcal{D}(X,Y)\rightarrow \mathcal{D}'(\alpha(X),\alpha(Y))$
as the morphism between Hom objects.
We write $I_{\mathcal{D}}:\mathcal{D}\rightarrow\mathcal{D}$ as the identity $\mathlcal{C}$-enriched functor of $\mathcal{D}$.
Let $\beta:\mathcal{D}\rightarrow\mathcal{D}'$
be another $\mathlcal{C}$-enriched functor from $\mathcal{D}$ to $\mathcal{D}'$.
For each $\mathlcal{C}$-enriched natural transformation
$\xi:\alpha\Rightarrow\beta:\mathcal{D}\rightarrow\mathcal{D}'$,
we have the underlying natural transformation
$\xi_0:\alpha_0\Rightarrow \beta_0:\mathcal{D}_0\rightarrow \mathcal{D}_0'$
whose component at each object $X$ in $\mathcal{D}$ is
$(\xi_0)_X=\xi_X:\alpha(X)\to \beta(X)$.
We denote
$\mathlcal{C}\text{-}\mathit{Funct}(\mathcal{D},\mathcal{D}')$
as the category of $\mathlcal{C}$-enriched functors
from $\mathcal{D}$ to $\mathcal{D}'$.

\subsection{Tensored enriched categories and tensorial strengths}
\label{subsec TenEnCat}
We say a $\mathlcal{C}$-enriched category $\mathcal{D}$ is \emph{tensored} if for each
object $X$ in $\mathcal{D}$,
the $\mathlcal{C}$-enriched Hom functor $\mathcal{D}(X,-):\mathcal{D}\rightarrow\mathcal{C}$
admits a left adjoint $\mathlcal{C}$-enriched functor
$-\circledast X:\mathcal{C}\rightarrow\mathcal{D}$.
We denote the components of the unit, counit
of the $\mathlcal{C}$-enriched adjunction
$-\circledast X\dashv \mathcal{D}(X,-)$
at $z\in \Obj(\mathcal{C})$, $Y\in \Obj(\mathcal{D})$ as
\begin{equation*}
  \xymatrix@C=30pt{
    \mathcal{C}
    \ar@<0.5ex>@/^0.8pc/[r]^-{-\text{ }\!\circledast X}
    &\mathcal{D}
    \ar@<0.5ex>@/^0.8pc/[l]^-{\mathcal{D}(X,-)}
  }
  \qquad\quad
  \mathit{Cv}_{z,X}:
  \xymatrix@C=13pt{
    z
    \ar[r]
    &\mathcal{D}(X,z\!\circledast\! X)        
    ,
  }
  \quad
  \mathit{Ev}_{X,Y}:
  \xymatrix@C=13pt{
    \mathcal{D}(X,Y)\!\circledast\! Y
    \ar[r]
    &X    
    .
  }
\end{equation*}
For each morphism $l:z\circledast X\rightarrow Y$ in $\mathcal{D}$,
we denote the corresponding morphism in $\mathcal{C}$ as
$\bar{l}:z\rightarrow \mathcal{D}(X,Y)$
and call it as the \emph{right adjunct} of $l$.
We have a unique isomorphism
$\imath_X:c\circledast X\xrightarrow[]{\cong}X$ in $\mathcal{D}$
whose right adjunct is the morphism
$\bar{\imath}_X=\I_X:c\rightarrow \mathcal{D}(X,X)$ in $\mathcal{C}$.

Let $\mathcal{D}$, $\mathcal{D}'$ be 
tensored $\mathlcal{C}$-enriched categories
and let $z\in\Obj(\mathcal{C})$, $X\in\Obj(\mathcal{D})$.
For each $\mathlcal{C}$-enriched functor
$\beta:\mathcal{D}\rightarrow\mathcal{D}'$,
the \emph{tensorial strength associated to $\beta$ at $z$, $X$}
is a morphism 
$t^{\beta}_{z,X}:
z\circledast \beta(X)
\rightarrow
\beta(z\circledast X)$
in $\mathcal{D}'$
defined as follows:
\begin{equation*}
  \begin{aligned}
    &t^{\beta}_{z,X}
    :=
    \mathit{Ev}_{\beta(X),\beta(z\circledast X)}
    \circ
    (\beta_{X,z\circledast X}\circledast \I_{\beta(X)})
    \circ
    (\mathit{Cv}_{z,X}\circledast\I_{\beta(X)})
    \\
    &:\!\!
    \xymatrix@C=15pt{
      z\!\circledast\! \beta(X) 
      \ar[r]
      &\mathcal{D}(X,z\!\circledast\! X)\!\circledast\! \beta(X)
      \ar[r]
      &\mathcal{D}'\bigl(\beta(X),\beta(z\!\circledast\! X)\bigr)\!\circledast\! \beta(X)
      \ar[r]
      &\beta(z\!\circledast\! X)
      .
    }
  \end{aligned}
\end{equation*}
We say the $\mathlcal{C}$-functor
$\beta:\mathcal{D}\rightarrow\mathcal{D}'$ \emph{preserves $\mathlcal{C}$-tensors}
if the tensorial strength
$t^{\beta}_{z,X}$
associated to $\beta$
is an isomorphism in $\mathcal{D}'$ for every pair
$z$, $X$.

\begin{example}
  The $\mathlcal{C}$-enriched category $\mathcal{C}$ is tensored.
  Let $z$, $x$, $y\in\Obj(\mathcal{C})$.
  The tensored object of $x$, $y$ in $\mathcal{C}$
  is $x\circledast y=x\otimes y$.
  Moreover,
  \begin{itemize}
    \item 
    the coherence isomorphism $\imath_x:c\otimes x\xrightarrow[]{\cong}x$ in (\ref{eq EnCat asij})
    corresponds to the unique isomorphism $\imath_x:c\circledast x\xrightarrow[]{\cong}x$ in $\mathcal{C}$;
      
    \item 
    the coherence isomorphism
    $a_{z,x,y}:z\otimes(x\otimes y)\xrightarrow[]{\cong}(z\otimes x)\otimes y$
    in (\ref{eq EnCat asij})
    corresponds to the tensorial strength
    $t^{-\circledast y}_{z,x}:z\circledast(x\circledast y)\xrightarrow[]{\cong}(z\circledast x)\circledast y$
    associated to the $\mathlcal{C}$-enriched functor $-\circledast y:\mathcal{C}\rightarrow\mathcal{C}$
    at $z$, $x$.
  \end{itemize}  
\end{example}

Let $x$, $y$ be objects in $\mathcal{C}$.
Throughout this paper, we identify the object $x\otimes y$ in $\mathcal{C}$
with the tensored object $x\circledast y$ in $\mathcal{C}$.
For instance, given a monoid $\mathfrak{b}=(b,u_b,m_b)$ in $\mathlcal{C}$,
we denote the product morphism as
$m_b:b\circledast b\rightarrow b$.

Let $\mathcal{D}$ be a tensored $\mathlcal{C}$-enriched category.
For each object $X$ in $\mathcal{D}$,
the $\mathlcal{C}$-enriched functor
$-\circledast X:\mathcal{C}\rightarrow\mathcal{D}$ preserves $\mathlcal{C}$-tensors.
We denote the associated tensorial strength as
\begin{equation*}
  a_{w,z,X}:=t^{-\circledast X}_{w,z}:
  \xymatrix@C=18pt{
    w\!\circledast\! (z\!\circledast\! X)
    \ar[r]^-{\cong}
    &(w\!\circledast\! z)\!\circledast\! X
    ,
  }
  \qquad
  \forall
  w,
  z \in \Obj(\mathcal{C})
  .
\end{equation*}
We often omit this isomorphism
and simply denote $w\circledast z\circledast X\in\Obj(\mathcal{D})$.

\begin{example}
  Let $\mathfrak{b}=(b, u_b, m_b)$ be a monoid in $\mathlcal{C}$.
  We explain the tensored $\mathlcal{C}$-enriched category
  $\mathpzc{Mod}\!_{\mathfrak{b}}$
  of right $\mathfrak{b}$-modules.
  A \emph{right $\mathfrak{b}$-module}
  is a pair
    $z_{\mathfrak{b}}=
    \bigl(
      z,
      \xymatrix@C=15pt{
        z\circledast b
        \ar[r]^-{\gamma_z}
        &z        
      }\!
    \bigr)$
  of an object $z$ in $\mathcal{C}$,
  and a morphism 
  $\gamma_z:z\circledast b\rightarrow z$ in $\mathcal{C}$
  satisfying the right $\mathfrak{b}$-action relations.
  For instance, we have the right $\mathfrak{b}$-module
    $b_{\mathfrak{b}}
    :=
    \bigl(
      b,
      \xymatrix@C=30pt{
        b\circledast b
        \ar[r]^-{\gamma_b=m_b}
        &b
      }\!
    \bigr)$.
  The Hom object between right $\mathfrak{b}$-modules
  $z_{\mathfrak{b}}=(z,\gamma_z)$ and
  $\tilde{z}_{\mathfrak{b}}=(\tilde{z},\gamma_{\tilde{z}})$
  is given by the equalizer
  \begin{equation}\label{eq EnCat Mbdef}
    \xymatrix@C=60pt{
      \mathpzc{Mod}\!_{\mathfrak{b}}(z_{\mathfrak{b}},\tilde{z}_{\mathfrak{b}})
      \ar@{^{(}->}[r]^-{\mathcal{U}_{z_{\mathfrak{b}},\tilde{z}_{\mathfrak{b}}}}
      &\mathcal{C}(z,\tilde{z})
      \ar@<0.7ex>[r]^-{(\gamma_z)^{\star}}
      \ar@<-0.7ex>[r]_-{(\gamma_{\tilde{z}})_{\star}\circ (-\circledast b)_{z,\tilde{z}}}
      &\mathcal{C}(z\!\circledast\! b,\tilde{z})
      .
    }
  \end{equation}
  The tensored object of $w\in\Obj(\mathcal{C})$ and
  $z_{\mathfrak{b}}\in\Obj(\mathpzc{Mod}\!_{\mathfrak{b}})$
  is the right $\mathfrak{b}$-module
  \begin{equation*}
    w\circledast z_{\mathfrak{b}}=(w\circledast z,\text{ }\gamma_{w\circledast z})    
    ,
    \qquad
    \gamma_{w\circledast z}=\I_w\circledast\gamma_z:
    \xymatrix@C=16pt{
      w\circledast z\!\circledast\! b 
      \ar[r]
      &w\circledast z
      .
    }
  \end{equation*}
  For each right $\mathfrak{b}$-module $z_{\mathfrak{b}}=(z,\gamma_z)$,
  the morphism $\gamma_z:z\circledast b\rightarrow z$ in $\mathcal{C}$
  becomes a morphism
  $\gamma_{z_{\mathfrak{b}}}:z\circledast b_{\mathfrak{b}}\rightarrow z_{\mathfrak{b}}$
  in $\mathpzc{Mod}\!_{\mathfrak{b}}$.
  For instance,
  the morphism $\gamma_b=m_b:b\circledast b\rightarrow b$ in $\mathcal{C}$
  becomes a morphism
  $\gamma_{b_{\mathfrak{b}}}:b\circledast b_{\mathfrak{b}}\rightarrow b_{\mathfrak{b}}$
  in $\mathpzc{Mod}\!_{\mathfrak{b}}$.
  The underlying category $(\mathpzc{Mod}\!_{\mathfrak{b}})_0$ of $\mathpzc{Mod}\!_{\mathfrak{b}}$
  has coequalizers.
  For each right $\mathfrak{b}$-module $z_{\mathfrak{b}}=(z,\gamma_z)$,
  we have the following coequalizer diagram in $(\mathpzc{Mod}\!_{\mathfrak{b}})_0$.
  \begin{equation}\label{eq TenEnCat gammazcoequalizer}
    \xymatrix@C=50pt{
      z\!\circledast\! b\!\circledast\! b_{\mathfrak{b}}
      \ar@<0.5ex>[r]^-{\gamma_z\circledast\I_{b_{\mathfrak{b}}}}
      \ar@<-0.5ex>[r]_-{\I_z\circledast \gamma_{b_{\mathfrak{b}}}}
      &z\!\circledast\! b_{\mathfrak{b}}
      \ar[r]^-{\gamma_{z_{\mathfrak{b}}}}
      &z_{\mathfrak{b}}
    }
  \end{equation}    
\end{example}

Let $\mathfrak{b}$ be a monoid in $\mathlcal{C}$.
We have the forgetful $\mathlcal{C}$-enriched functor
  $\mathcal{U}:
    \mathpzc{Mod}\!_{\mathfrak{b}}\rightarrow \mathcal{C}$
whose morphism on Hom objects is given by the equalizer 
$\mathcal{U}_{z_{\mathfrak{b}},\tilde{z}_{\mathfrak{b}}}
:\mathpzc{Mod}\!_{\mathfrak{b}}(z,\tilde{z})\hookrightarrow \mathcal{C}(z,\tilde{z})$
defined in (\ref{eq EnCat Mbdef}).
The forgetful $\mathlcal{C}$-enriched functor
$\mathcal{U}:\mathpzc{Mod}\!_{\mathfrak{b}}\rightarrow\mathcal{C}$
preserves $\mathlcal{C}$-tensors,
as the associated tensorial strength
at $w\in\Obj(\mathcal{C})$, $z_{\mathfrak{b}}=(z,\gamma_z)\in\Obj(\mathpzc{Mod}\!_{\mathfrak{b}})$
is the identity morphism
$w\circledast z=w\circledast z$ in $\mathcal{C}$.

We introduce basic properties of tensorial strengths without proof.
See \cite[\textsection 3]{SegrtRatkovic2013} for detailed explanations.
\begin{enumerate}
  \item
  Let $\mathcal{D}$, $\mathcal{D}'$ be tensored $\mathlcal{C}$-enriched categories
  and let $w$, $z\in\Obj(\mathcal{C})$, $X\in \Obj(\mathcal{D})$.
  For each $\mathlcal{C}$-enriched functor $\beta:\mathcal{D}\rightarrow\mathcal{D}'$,
  the tensorial strength associated to $\beta$ satisfies the following relations.
  \begin{equation}\label{eq TenEnCat strength funct}
    \hspace*{-0.7cm}
    \vcenter{\hbox{
      \xymatrix@R=20pt@C=15pt{
        c\!\circledast\! \beta(X)
        \ar[r]^-{t^{\beta}_{c,X}}
        \ar@/_1pc/[dr]_-{\imath_{\beta(X)}}^-{\cong}
        &\beta(c\!\circledast\! X)
        \ar[d]^-{\beta(\imath_X)}_-{\cong}
        \\
        \text{ }
        &\beta(X)
      }  
    }}
    \quad
    \vcenter{\hbox{
      \xymatrix@R=15pt@C=28pt{
        w\!\circledast\! \bigl(z\!\circledast\! \beta(X)\bigr)
        \ar[d]_-{a_{w,z,\beta(X)}}^-{\cong}
        \ar[r]^-{\I_w\circledast t^{\beta}_{z,X}}
        &w\!\circledast\! \beta(z\!\circledast\! X)
        \ar[r]^-{t^{\beta}_{w,z\circledast X}}
        &\beta\bigl(w\!\circledast\! (z\!\circledast\! X)\bigr)
        \ar[d]^-{\beta(a_{w,z,X})}_-{\cong}
        \\
        (w\!\circledast\! z)\!\circledast\! \beta(X)
        \ar[rr]^-{t^{\beta}_{w\circledast z,X}}
        &\text{ }
        &\beta\bigl((w\!\circledast\! z)\!\circledast\! X\bigr)
      }  
    }}
  \end{equation}
  Conversely, suppose we have a functor
  $\mathcal{F}_0:\mathcal{D}_0\to \mathcal{D}'_0$
  between the underlying categories of $\mathcal{D}$, $\mathcal{D}'$
  together with a collection of morphisms in $\mathcal{D}'$
  \begin{equation*}
    \bigl\{\text{ }
    t_{z,X}:z\!\circledast\! \mathcal{F}_0(X)\rightarrow \mathcal{F}_0(z\!\circledast\! X)
    \text{ }\big|\text{ }
    z\in\Obj(\mathcal{C})
    ,\text{ }
    X\in\Obj(\mathcal{D})
    \text{ }\bigr\}
  \end{equation*}
  which is natural in variables $z$, $X$ and satisfies the relations (\ref{eq TenEnCat strength funct}).
  Then we have a unique $\mathlcal{C}$-enriched functor $\beta:\mathcal{D}\rightarrow\mathcal{D}'$
  whose underlying functor  $\beta_0$
  is equal to $\mathcal{F}_0$
  and $t^{\beta}_{z,X}=t_{z,X}$ holds for every pair $z$, $X$.

  \item 
  Let $\alpha$, $\beta:\mathcal{D}\rightarrow\mathcal{D}'$ be $\mathlcal{C}$-enriched
  functors between tensored $\mathlcal{C}$-enriched categories $\mathcal{D}$, $\mathcal{D}'$
  and let $z\in\Obj(\mathcal{C})$, $X\in\Obj(\mathcal{D})$.
  For each $\mathlcal{C}$-enriched natural transformation $\xi:\alpha\Rightarrow\beta:\mathcal{D}\rightarrow\mathcal{D}'$,
  we have the following relation.
  \begin{equation}\label{eq TenEnCat strength nat}
    \vcenter{\hbox{
      \xymatrix@R=15pt@C=30pt{
        z\!\circledast\! \alpha(X)
        \ar[d]_-{\I_z\circledast \xi_X}
        \ar[r]^-{t^{\alpha}_{z,X}}
        &\alpha\bigl(z\!\circledast\! X\bigr)
        \ar[d]^-{\xi_{z\circledast X}}
        \\
        z\!\circledast\! \beta(X)
        \ar[r]^-{t^{\beta}_{z,X}}
        &\beta\bigl(z\!\circledast\! X\bigr)
      }
    }}
  \end{equation}
  Conversely, suppose we are given a natural transformation
  $\xi_0:\alpha_0\Rightarrow \beta_0:\mathcal{D}_0\rightarrow \mathcal{D}'_0$
  between the underlying functors $\alpha_0$, $\beta_0$.
  Then $\xi_0$ becomes a $\mathlcal{C}$-enriched natural transformation
  $\xi:\alpha\Rightarrow\beta:\mathcal{D}\rightarrow\mathcal{D}'$
  if and only if it satisfies the relation (\ref{eq TenEnCat strength nat})
  for every pair $z$, $X$.
  This is precisely the correspondence between $\mathlcal{C}$-enriched natural transformations
  and strong natural transformations,
  first introduced by Anders Kock in \cite{Kock1970}.
  It is also explained in \cite{Berger2019}.

  \item 
  Let $\mathcal{D}$, $\mathcal{D}'$, $\mathcal{D}''$ be tensored $\mathlcal{C}$-enriched categories
  and let $\mathcal{D}\xrightarrow{\beta}\mathcal{D}'\xrightarrow{\beta'}\mathcal{D}''$
  be $\mathlcal{C}$-enriched functors.
  The tensorial strength of
  the composition $\beta'\beta:\mathcal{D}\rightarrow\mathcal{D}''$
  at $z\in\Obj(\mathcal{C})$, $X\in\Obj(\mathcal{D})$ is given by
  \begin{equation*}
    t^{\beta'\beta}_{z,X}
    =\beta'(t^{\beta}_{z,X})\circ t^{\beta'}_{z,\beta(X)}
    :
    \xymatrix@C=15pt{
      z\!\circledast\! \beta'\!\beta(X)
      \ar[r]
      &\beta'\bigl(z\!\circledast\! \beta(X)\bigr)
      \ar[r]
      &\beta'\!\beta(z\!\circledast\! X)
      .
    }
  \end{equation*}
\end{enumerate}

\subsection{Left module objects}
\label{subsec LeftmoduleObj}
For the rest of this section,
$\mathfrak{b}=(b,u_b,m_b)$ is a monoid in $\mathlcal{C}$.

\begin{definition}
  Let $\mathcal{D}$ be a tensored $\mathlcal{C}$-enriched category.
  A \emph{left $\mathfrak{b}$-module object in $\mathcal{D}$}
  is a pair
    $\text{ }\!\!_{\mathfrak{b}}X=
    \bigl(
      X,
      \xymatrix@C=18pt{
        b\!\circledast\! X
        \ar[r]^-{\rho_X}
        &X
      }
      \!
    \bigr)$
  of an object $X$ in $\mathcal{D}$,
  and a morphism $\rho_X:b\circledast X\rightarrow X$ in $\mathcal{D}$
  satisfying the left $\mathfrak{b}$-action relations.
  A morphism $\text{ }\!\!_{\mathfrak{b}}X\rightarrow\text{ }\!\!_{\mathfrak{b}}\tilde{X}$
  of left $\mathfrak{b}$-module objects in $\mathcal{D}$
  is a morphism $X\rightarrow \tilde{X}$ in $\mathcal{D}$
  which is compatible with the left $\mathfrak{b}$-action morphisms $\rho_X$, $\rho_{\tilde{X}}$.
  We denote 
  \begin{equation*}
    \text{ }\!\!_{\mathfrak{b}}\mathcal{D}
  \end{equation*}
  as the category of left $\mathfrak{b}$-module objects in $\mathcal{D}$.
  We do not treat $\text{ }\!\!_{\mathfrak{b}}\mathcal{D}$
  as a $\mathlcal{C}$-enriched category.
\end{definition}

Let $X$ be an object in a  tensored $\mathlcal{C}$-enriched category $\mathcal{D}$.
Then the triple
  $\mathit{End}_{\mathcal{D}}(X)
  :=(
    \mathcal{D}(X,X)
    ,\text{ }
    \I_X
    ,\text{ }
    \mu_{X,X,X}
  )$
is a monoid in $\mathlcal{C}$.
For each morphism $\rho_X:b\circledast X\rightarrow X$ in $\mathcal{D}$,
the pair $(X,\rho_X)$ is a left $\mathfrak{b}$-module object in $\mathcal{D}$
if and only if the right adjunct
$\bar{\rho}_X:b\rightarrow \mathcal{D}(X,X)$
of $\rho_X$ becomes a morphism
$\bar{\rho}_X:\mathfrak{b}\rightarrow\mathit{End}_{\mathcal{D}}(X)$
of monoids in $\mathlcal{C}$.

Let $\mathcal{D}$ be a tensored $\mathlcal{C}$-enriched category
and let 
$\text{ }\!\!_{\mathfrak{b}}X=(X,\rho_X)$
be a left $\mathfrak{b}$-module object in $\mathcal{D}$.
Then the $\mathlcal{C}$-enriched Hom functor 
$\mathcal{D}(X,-):\mathcal{D}\rightarrow\mathcal{C}$
factors through the forgetful $\mathlcal{C}$-enriched functor 
$\mathcal{U}:\mathpzc{Mod}\!_{\mathfrak{b}}\rightarrow\mathcal{C}$.
We have a $\mathlcal{C}$-enriched functor
\begin{equation} \label{eq LeftModuleObj T(bX,-)}
  \vcenter{\hbox{
    \xymatrix@R=18pt@C=40pt{
      \mathcal{D}
      \ar[r]^-{\mathcal{D}(\text{ }\!\!_{\mathfrak{b}}X,-)}
      \ar@/_1pc/[dr]_(0.4){\mathcal{D}(X,-)}
      &\mathpzc{Mod}\!_{\mathfrak{b}}
      \ar[d]^-{\mathcal{U}}
      \\
      \text{ }
      &\mathcal{C}
    }  
  }}
  \qquad\qquad
  \mathcal{D}(\text{ }\!\!_{\mathfrak{b}}X,-):
    \mathcal{D}
    \rightarrow
    \mathpzc{Mod}\!_{\mathfrak{b}}   
\end{equation}
which sends each object $Y$ in $\mathcal{D}$
to the right $\mathfrak{b}$-module
$\mathcal{D}(\text{ }\!\!_{\mathfrak{b}}X,Y)=(\mathcal{D}(X,Y),\gamma_{\mathcal{D}(X,Y)})$
whose right $\mathfrak{b}$-action
is given by
  $\gamma_{\mathcal{D}(X,Y)}
  :\!\!
  \xymatrix@C=20pt{
    \mathcal{D}(X,Y)\!\circledast\! b
    \ar[rr]^-{\I_{\mathcal{D}(X,Y)}\circledast\bar{\rho}_X}
    &\text{ }
    &\mathcal{D}(X,Y)\!\circledast\! \mathcal{D}(X,X)
    \ar[r]^-{\mu_{X,X,Y}}
    &\mathcal{D}(X,Y)
    .
  }$

\begin{definition}
  \label{def LeftModuleObj bb'bimodule}
  Let $\mathfrak{b}'=(b',u_{b'},m_{b'})$ be another monoid in $\mathlcal{C}$.
  We define a \emph{$(\mathfrak{b},\mathfrak{b}')$-bimodule}
  $\text{ }\!\!_{\mathfrak{b}}x_{\mathfrak{b}'}$
  as a left $\mathfrak{b}$-module object in 
  the tensored $\mathlcal{C}$-enriched category
  $\mathpzc{Mod}\!_{\mathfrak{b}'}$
  of right $\mathfrak{b}'$-modules.
  Equivalently, it is a pair
    $\text{ }\!\!_{\mathfrak{b}}x_{\mathfrak{b}'}
    =
    \bigl(
      x_{\mathfrak{b}'},
      \xymatrix@C=15pt{
        b\!\circledast\! x_{\mathfrak{b}'}
        \ar[r]^-{\rho_{x_{\mathfrak{b}'}}}
        &x_{\mathfrak{b}'}
      }\!
    \bigr)$
  of a right $\mathfrak{b}'$-module
    $x_{\mathfrak{b}'}=
    \bigl(
      x,
      \xymatrix@C=12pt{
        x\!\circledast\! b'
        \ar[r]^-{\gamma'_x}
        &x
      }
    \bigr)$
  and a morphism
  $\rho_{x_{\mathfrak{b}'}}:b\circledast x_{\mathfrak{b}'}\rightarrow x_{\mathfrak{b}'}$ in $\mathpzc{Mod}\!_{\mathfrak{b}'}$
  satisfying the left $\mathfrak{b}$-action relations.
  We denote
  \begin{equation*}
    \text{ }\!\!_{\mathfrak{b}}\mathpzc{Mod}\!_{\mathfrak{b}'}
  \end{equation*}
  as
  the category of $(\mathfrak{b},\mathfrak{b}')$-bimodules.
  We do not treat $\text{ }\!\!_{\mathfrak{b}}\mathpzc{Mod}\!_{\mathfrak{b}'}$
  as a $\mathlcal{C}$-enriched category.
  Note that we have the $(\mathfrak{b},\mathfrak{b})$-bimodule
    $\text{ }\!\!_{\mathfrak{b}}b_{\mathfrak{b}}
    :=
    (
      b_{\mathfrak{b}}
      ,\text{ }
      \gamma_{b_{\mathfrak{b}}}:
      \!\!
      \xymatrix@C=15pt{
        b\!\circledast\! b_{\mathfrak{b}}\rightarrow b_{\mathfrak{b}}
      }
      \!\!
    )$.
\end{definition}

\begin{example}
  We explain what
  $\mathpzc{Mod}\!_{\mathfrak{b}}$ and
  $\text{ }\!\!_{\mathfrak{b}}\mathpzc{Mod}\!_{\mathfrak{b}'}$
  are in each example of the base category $\mathlcal{C}$.
  \begin{enumerate}
  \item
  Let $\mathlcal{C}=\mathlcal{Ab}$ be the closed symmetric monoidal category of abelian groups.
  \begin{itemize}
    \item 
     Monoids $\mathfrak{b}$, $\mathfrak{b}'$ in $\mathlcal{C}$ are rings;
    
    \item
     $\mathpzc{Mod}\!_{\mathfrak{b}}$ is the preadditive category of right modules over the ring $\mathfrak{b}$;
    
    \item
    $\text{ }\!\!_{\mathfrak{b}}\mathpzc{Mod}\!_{\mathfrak{b}'}$
    is the category of $(\mathfrak{b},\mathfrak{b}')$-bimodules.    
  \end{itemize}

  \item
  Let $\mathlcal{C}=\mathlcal{fAb}$ be the closed symmetric monoidal category of finitely generated abelian groups.
  \begin{itemize}
    \item 
    Monoids $\mathfrak{b}$, $\mathfrak{b}'$ in $\mathlcal{C}$ are rings finitely generated as abelian groups;
    
    \item 
    $\mathpzc{Mod}\!_{\mathfrak{b}}$ is the preadditive category of
    right modules over the ring $\mathfrak{b}$
    which are finitely generated as abelian groups;

    \item 
    $\text{ }\!\!_{\mathfrak{b}}\mathpzc{Mod}\!_{\mathfrak{b}'}$
    is the category of $(\mathfrak{b},\mathfrak{b}')$-bimodules
    which are finitely generated as abelian groups.
  \end{itemize}

  \item 
  Let $\mathlcal{C}=\mathlcal{sSet}$ be the closed symmetric monoidal category of simplicial sets.
  \begin{itemize}
    \item 
    Monoids $\mathfrak{b}$, $\mathfrak{b}'$ in $\mathlcal{C}$ are simplicial monoids;

    \item 
    $\mathpzc{Mod}\!_{\mathfrak{b}}$ is the simplicially enriched category of
    simplicial sets equipped with a right action of the simplicial monoid $\mathfrak{b}$;

    \item
    $\text{ }\!\!_{\mathfrak{b}}\mathpzc{Mod}\!_{\mathfrak{b}'}$
    is the category of simplicial sets equipped with a bi-action of the simplicial monoids $\mathfrak{b}$, $\mathfrak{b}'$.
  
  \end{itemize}

  \item
  Let $\mathlcal{C}=\mathlcal{Ban}$ be the closed symmetric monoidal category of Banach spaces
  and linear contractions between them, equipped with the projective tensor product.
  \begin{itemize}
    \item 
    Monoids $\mathfrak{b}$, $\mathfrak{b}'$ in $\mathlcal{C}$ are associative unital Banach algebras;

    \item
    $\mathpzc{Mod}\!_{\mathfrak{b}}$ is the $\mathlcal{Ban}$-enriched category
    of Banach spaces equipped with a right action of the Banach algebra $\mathfrak{b}$;

    \item
    $\text{ }\!\!_{\mathfrak{b}}\mathpzc{Mod}\!_{\mathfrak{b}'}$
    is the category of Banach spaces equipped with a bi-action of the Banach algebras $\mathfrak{b}$, $\mathfrak{b}'$.
  \end{itemize}

  \item
  Let $\mathlcal{C}=\mathlcal{S\!p}^{\Sigma}_{\mathit{CGT}_{\!*}}$
  be the closed symmetric monoidal category of topological symmetric spectra.
  \begin{itemize}
    \item 
    Monoids $\mathfrak{b}$, $\mathfrak{b}'$ in $\mathlcal{C}$ are symmetric ring spectra;

    \item
    $\mathpzc{Mod}\!_{\mathfrak{b}}$ is the $\mathlcal{S\!p}^{\Sigma}_{\mathit{CGT}_{\!*}}$-enriched category
    of symmetric spectra equipped with a right action of the symmetric ring spectrum $\mathfrak{b}$;

    \item
    $\text{ }\!\!_{\mathfrak{b}}\mathpzc{Mod}\!_{\mathfrak{b}'}$
    is the category of symmetric spectra
    equipped with a bi-action of the symmetric ring spectra $\mathfrak{b}$, $\mathfrak{b}'$.
  \end{itemize}
  \end{enumerate}
\end{example}

\begin{definition}\label{def LeftModuleObj MbbTtoT}
  Let $\mathcal{D}$ be a tensored $\mathlcal{C}$-enriched category
  whose underlying category $\mathcal{D}_0$ has coequalizers.
  We define the functor
  \begin{equation*}
  -\circledast_{\mathfrak{b}}-:
    (\mathpzc{Mod}\!_{\mathfrak{b}})_0\times
    \text{ }\!\!_{\mathfrak{b}}\mathcal{D}
    \rightarrow
   \mathcal{D}_0
  \end{equation*}
  as follows.
  The functor
  sends each pair of a right $\mathfrak{b}$-module $z_{\mathfrak{b}}=(z,\gamma_z)$
  and an object $\text{ }\!\!_{\mathfrak{b}}X=(X,\rho_X)$ in $\text{ }\!\!_{\mathfrak{b}}\mathcal{D}$
  to the following coequalizer in $\mathcal{D}_0$.
  \begin{equation*}
    \xymatrix@C=40pt{
      z\!\circledast\! b\!\circledast\! X
      \ar@<0.5ex>[r]^-{\gamma_z\circledast\I_X}
      \ar@<-0.5ex>[r]_-{\I_z\circledast \rho_X}
      &z\!\circledast\! X
      \ar@{->>}[r]^-{\mathit{cq}_{z_{\mathfrak{b}},\text{ }\!\!_{\mathfrak{b}}\!X}}
      &z_{\mathfrak{b}}\!\circledast_{\mathfrak{b}}\! \text{ }\!\!_{\mathfrak{b}}X
    }
  \end{equation*}
  The functor
  sends each pair of
  a morphism $l:z_{\mathfrak{b}}\rightarrow\tilde{z}_{\mathfrak{b}}$ in $\mathpzc{Mod}\!_{\mathfrak{b}}$
  and a morphism $\tilde{l}:\text{ }\!\!_{\mathfrak{b}}X\rightarrow \text{ }\!\!_{\mathfrak{b}}\tilde{X}$
  in $\text{ }\!\!_{\mathfrak{b}}\mathcal{D}$ to the unique morphism
  $l\circledast_{\mathfrak{b}}\tilde{l}
  :z_{\mathfrak{b}}\circledast_{\mathfrak{b}}\text{ }\!\!_{\mathfrak{b}}X
  \rightarrow \tilde{z}_{\mathfrak{b}}\circledast_{\mathfrak{b}}\text{ }\!\!_{\mathfrak{b}}\tilde{X}$
  in $\mathcal{D}$
  satisfying the relation 
  \begin{equation*}
    \vcenter{\hbox{
      \xymatrix@R=15pt@C=45pt{
        z\!\circledast\! X
        \ar@{->>}[d]_-{\mathit{cq}_{z_{\mathfrak{b}},\text{ }\!\!_{\mathfrak{b}}X}}
        \ar[r]^-{l\circledast \tilde{l}}
        &\tilde{z}\!\circledast\! \tilde{X}
        \ar@{->>}[d]^(0.51){\mathit{cq}_{\tilde{z}_{\mathfrak{b}},\text{ }\!\!_{\mathfrak{b}}\tilde{X}}}
        \\
        z_{\mathfrak{b}}\!\circledast_{\mathfrak{b}}\! \text{ }\!\!_{\mathfrak{b}}X
        \ar@{.>}[r]^-{\exists!\text{ }l\circledast_{\mathfrak{b}}\tilde{l}}
        &\tilde{z}_{\mathfrak{b}}\!\circledast_{\mathfrak{b}}\! \text{ }\!\!_{\mathfrak{b}}\tilde{X}
      }  
    }}
    \qquad
    \mathit{cq}_{\tilde{z}_{\mathfrak{b}},\text{ }\!\!_{\mathfrak{b}}\tilde{X}}
    \circ
    (l\circledast \tilde{l})
    =
    (l\circledast_{\mathfrak{b}} \tilde{l})
    \circ
    \mathit{cq}_{z_{\mathfrak{b}},\text{ }\!\!_{\mathfrak{b}}X}
    .
  \end{equation*}
\end{definition}

Let $\mathcal{D}$ be a tensored $\mathlcal{C}$-enriched category
whose underlying category $\mathcal{D}_0$ has coequalizers.
For each object 
$\text{ }\!\!_{\mathfrak{b}}X=(X,\rho_X)$ in $\text{ }\!\!_{\mathfrak{b}}\mathcal{D}$,
we have a unique isomorphism
$\imath^{\mathfrak{b}}_{\text{ }\!\!_{\mathfrak{b}}X}
:b_{\mathfrak{b}}\circledast_{\mathfrak{b}}\text{ }\!\!_{\mathfrak{b}} X\xrightarrow[]{\cong} X$
in $\mathcal{D}$ which satisfies the relation
$\rho_X=
\imath^{\mathfrak{b}}_{\text{ }\!\!_{\mathfrak{b}}X}
\circ
\mathit{cq}_{b_{\mathfrak{b}},\text{ }\!\!_{\mathfrak{b}}X}$.
The inverse of $\imath^{\mathfrak{b}}_{\text{ }\!\!_{\mathfrak{b}}X}$ is given by
\begin{equation} \label{eq LeftModuleObj imathbbX}
  \vcenter{\hbox{
    \xymatrix@R=15pt@C=50pt{
      b\!\circledast\! X
      \ar@{->>}[d]_-{\mathit{cq}_{b_{\mathfrak{b}},\text{ }\!\!_{\mathfrak{b}}X}}
      \ar@/^1pc/[dr]^-{\rho_X}
      \\
      b_{\mathfrak{b}}\!\circledast_{\mathfrak{b}}\! \text{ }\!\!_{\mathfrak{b}}X
      \ar@{.>}[r]^(0.52){\exists!\text{ }\imath^{\mathfrak{b}}_{\text{ }\!\!_{\mathfrak{b}}X}}_-{\cong}
      &\text{ }X
    }
  }}
  \qquad\quad
  \begin{aligned}
    &
    (\imath^{\mathfrak{b}}_{\text{ }\!\!_{\mathfrak{b}}X})^{-1}
    =
    \mathit{cq}_{b_{\mathfrak{b}},\text{ }\!\!_{\mathfrak{b}}X}
    \circ
    (u_b\!\circledast\! \I_X)
    \circ
    \imath^{-1}_X
    \\
    &\text{ }\text{ }\text{ }:\!\!
    \xymatrix@C=18pt{
      X
      \ar[r]^-{\cong}
      &c\!\circledast\! X
      \ar[r]
      &b\!\circledast\! X
      \ar@{->>}[r]
      &b_{\mathfrak{b}}\!\circledast_{\mathfrak{b}}\! \text{ }\!\!_{\mathfrak{b}}X
      .
    }
  \end{aligned}
\end{equation}

\begin{lemma}
  \label{lem LeftModuleObj Leftadj}
  Let $\mathcal{D}$ be a tensored $\mathlcal{C}$-enriched category
  whose underlying category $\mathcal{D}_0$ has coequalizers.
  We have a well-defined functor
  \begin{equation}\label{eq LeftModuleObj Leftadj}
    \vcenter{\hbox{
      \xymatrix@R=0pt{
        \text{ }\!\!_{\mathfrak{b}}\mathcal{D}
        \ar[r]
        &{\mathlcal{C}\text{-}\mathit{Funct}(\mathpzc{Mod}\!_{\mathfrak{b}},\mathcal{D})}
        \\
        \text{ }\!\!_{\mathfrak{b}}X
        \ar@{|->}[r]
        &-\circledast_{\mathfrak{b}}\! \text{ }\!\!_{\mathfrak{b}}X:\mathpzc{Mod}\!_{\mathfrak{b}}\rightarrow\mathcal{D}
      }  
    }}
  \end{equation}
  from the category of left $\mathfrak{b}$-module objects in $\mathcal{D}$
  to the category of $\mathlcal{C}$-enriched functors
  $\mathpzc{Mod}\!_{\mathfrak{b}}\rightarrow \mathcal{D}$.
  \begin{enumerate}
    \item 
    For each object $\text{ }\!\!_{\mathfrak{b}}X=(X,\rho_X)$ in $\text{ }\!\!_{\mathfrak{b}}\mathcal{D}$,
    we have a $\mathlcal{C}$-enriched functor
    $-\circledast_{\mathfrak{b}}\text{ }\!\!_{\mathfrak{b}}X:\mathpzc{Mod}\!_{\mathfrak{b}}\rightarrow\mathcal{D}$
    which is uniquely determined as follows.
    The underlying functor of
    $-\circledast_{\mathfrak{b}}\text{ }\!\!_{\mathfrak{b}}X$
    is defined in Definition~\ref{def LeftModuleObj MbbTtoT},
    and the associated tensorial strength 
    \begin{equation*}
      \hspace*{-0.3cm}
      a_{w,z_{\mathfrak{b}},\text{ }\!\!_{\mathfrak{b}}X}
      :=t^{-\circledast_{\mathfrak{b}}\text{ }\!\!_{\mathfrak{b}}X}_{w,z_{\mathfrak{b}}}
      :
      \xymatrix@C=15pt{
        w\!\circledast\! (z_{\mathfrak{b}}\!\circledast_{\mathfrak{b}}\! \text{ }\!\!_{\mathfrak{b}}X)
        \ar[r]^-{\cong}
        &(w\!\circledast\! z_{\mathfrak{b}})\!\circledast_{\mathfrak{b}}\! \text{ }\!\!_{\mathfrak{b}}X
        ,
      }
      \quad
      w\in\Obj(\mathcal{C}),
      \text{ }
      z_{\mathfrak{b}}\in\Obj(\mathpzc{Mod}\!_{\mathfrak{b}})
    \end{equation*}
    is the unique isomorphism in $\mathcal{D}$ satisfying the relation
    \begin{equation*}
      \vcenter{\hbox{
        \xymatrix@R=15pt@C=50pt{
          w\!\circledast\! (z\!\circledast\! X)
          \ar@{->>}[d]_-{\I_w\circledast \mathit{cq}_{z_{\mathfrak{b}},\text{ }\!\!_{\mathfrak{b}}X}}
          \ar[r]^-{a_{w,z,X}}_-{\cong}
          &(w\!\circledast\! z)\!\circledast\! X
          \ar@{->>}[d]^-{\mathit{cq}_{w\circledast z_{\mathfrak{b}},\text{ }\!\!_{\mathfrak{b}}X}}
          \\
          w\!\circledast\! (z_{\mathfrak{b}}\!\circledast_{\mathfrak{b}}\! \text{ }\!\!_{\mathfrak{b}}X)
          \ar@{.>}[r]^-{\exists!\text{ }a_{w,z_{\mathfrak{b}},\text{ }\!\!_{\mathfrak{b}}X}}_-{\cong}
          &(w\!\circledast\! z_{\mathfrak{b}})\!\circledast_{\mathfrak{b}}\! \text{ }\!\!_{\mathfrak{b}}X
        }  
      }}
      \qquad
      \begin{aligned}
        &
        \mathit{cq}_{w\circledast z_{\mathfrak{b}},\text{ }\!\!_{\mathfrak{b}}X}
        \circ
        a_{w,z,X}
        \\
        &=
        a_{w,z_{\mathfrak{b}},\text{ }\!\!_{\mathfrak{b}}X}
        \circ
        (\I_w\circledast\mathit{cq}_{z_{\mathfrak{b}},\text{ }\!\!_{\mathfrak{b}}X})
        .
      \end{aligned}
    \end{equation*}

    \item 
    For each morphism $\text{ }\!\!_{\mathfrak{b}}X\rightarrow\text{ }\!\!_{\mathfrak{b}}\tilde{X}$
    in $\text{ }\!\!_{\mathfrak{b}}\mathcal{D}$,
    the following collection of morphisms in $\mathcal{D}$
    \begin{equation} \label{eq2 LeftModuleObj awzbbX}
      \bigl\{\text{ }
      z_{\mathfrak{b}}\!\circledast_{\mathfrak{b}}\!\text{ }\!\!_{\mathfrak{b}}X
      \rightarrow z_{\mathfrak{b}}\!\circledast_{\mathfrak{b}}\!\text{ }\!\!_{\mathfrak{b}}\tilde{X}        
      \text{ }\big|\text{ }
      z_{\mathfrak{b}}\in\Obj(\mathpzc{Mod}\!_{\mathfrak{b}})
      \text{ }\bigr\}
    \end{equation}
    defines a $\mathlcal{C}$-enriched natural transformation
    $-\circledast_{\mathfrak{b}}\text{ }\!\!_{\mathfrak{b}}X
    \Rightarrow-\circledast_{\mathfrak{b}}\text{ }\!\!_{\mathfrak{b}}\tilde{X}
    :\mathpzc{Mod}\!_{\mathfrak{b}}\rightarrow\mathcal{D}$.
  \end{enumerate}
\end{lemma}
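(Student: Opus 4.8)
The plan is to construct the $\mathlcal{C}$-enriched functor $-\circledast_{\mathfrak{b}}\text{ }\!\!_{\mathfrak{b}}X$ using the converse half of the reconstruction principle recorded in (\ref{eq TenEnCat strength funct}): it suffices to exhibit the underlying functor (already given in Definition~\ref{def LeftModuleObj MbbTtoT}) together with a natural family of morphisms $a_{w,z_{\mathfrak{b}},\text{ }\!\!_{\mathfrak{b}}X}\colon w\circledast(z_{\mathfrak{b}}\circledast_{\mathfrak{b}}\text{ }\!\!_{\mathfrak{b}}X)\to(w\circledast z_{\mathfrak{b}})\circledast_{\mathfrak{b}}\text{ }\!\!_{\mathfrak{b}}X$ satisfying the two coherence identities in (\ref{eq TenEnCat strength funct}). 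First I would define $a_{w,z_{\mathfrak{b}},\text{ }\!\!_{\mathfrak{b}}X}$ by the universal property of the coequalizer $\mathit{cq}_{z_{\mathfrak{b}},\text{ }\!\!_{\mathfrak{b}}X}$: the morphism $\mathit{cq}_{w\circledast z_{\mathfrak{b}},\text{ }\!\!_{\mathfrak{b}}X}\circ a_{w,z,X}\colon w\circledast(z\circledast X)\to(w\circledast z_{\mathfrak{b}})\circledast_{\mathfrak{b}}\text{ }\!\!_{\mathfrak{b}}X$ coequalizes the parallel pair defining $w\circledast(z_{\mathfrak{b}}\circledast_{\mathfrak{b}}\text{ }\!\!_{\mathfrak{b}}X)$, because $\I_w\circledast(-)$ preserves coequalizers (the tensor $w\circledast-$ on $\mathcal{D}$ is a left adjoint, hence cocontinuous) and $a_{w,z,X}$ intertwines the two pairs by naturality of the coherence isomorphism $a$ recorded in \textsection\!~\ref{subsec TenEnCat}. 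This forces $a_{w,z_{\mathfrak{b}},\text{ }\!\!_{\mathfrak{b}}X}$ to exist and be unique; it is an isomorphism because the outer square has both vertical maps epi and the top is iso, and one checks the inverse is induced symmetrically from $a_{w,z,X}^{-1}$.

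Next I would verify naturality of $a_{w,z_{\mathfrak{b}},\text{ }\!\!_{\mathfrak{b}}X}$ in $w$ and $z_{\mathfrak{b}}$. This is a diagram chase that reduces, along the epimorphisms $\mathit{cq}$, to the already-known naturality of $a_{w,z,X}$ in $\mathcal{D}$; since $\mathit{cq}$ is natural in $z_{\mathfrak{b}}$ by construction (Definition~\ref{def LeftModuleObj MbbTtoT}) and $w\circledast-$, $-\circledast X$ are functorial, the squares commute after precomposition with an epi, hence commute. The two coherence relations (\ref{eq TenEnCat strength funct}) for $a_{w,z_{\mathfrak{b}},\text{ }\!\!_{\mathfrak{b}}X}$ — the unit triangle with $\imath$ and the pentagon-type associativity hexagon — are proved the same way: precompose with the appropriate composite of $\mathit{cq}$'s (which is epi), push everything down to the level of $w\circledast(z\circledast X)$ where the identities become exactly the coherence relations for $a_{w,z,X}$ that hold because $-\circledast X\colon\mathcal{C}\to\mathcal{D}$ preserves $\mathlcal{C}$-tensors (as stated in \textsection\!~\ref{subsec TenEnCat}). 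This gives part (1). For part (2), given a morphism $\text{ }\!\!_{\mathfrak{b}}X\to\text{ }\!\!_{\mathfrak{b}}\tilde X$, I take the family $z_{\mathfrak{b}}\circledast_{\mathfrak{b}}\text{ }\!\!_{\mathfrak{b}}X\to z_{\mathfrak{b}}\circledast_{\mathfrak{b}}\text{ }\!\!_{\mathfrak{b}}\tilde X$ already produced by the functoriality in Definition~\ref{def LeftModuleObj MbbTtoT}, check it is natural in $z_{\mathfrak{b}}$ (again a chase modulo epis), and invoke the converse of (\ref{eq TenEnCat strength nat}): the family is $\mathlcal{C}$-enriched natural iff it commutes with the tensorial strengths, and this square, precomposed with $\I_w\circledast\mathit{cq}$, collapses to the naturality of the identity $a_{w,z,X}$, hence holds. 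Finally, functoriality of the assignment $\text{ }\!\!_{\mathfrak{b}}X\mapsto(-\circledast_{\mathfrak{b}}\text{ }\!\!_{\mathfrak{b}}X)$ in (\ref{eq LeftModuleObj Leftadj}) — respecting identities and composition — is immediate from uniqueness in the coequalizer universal property.

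The main obstacle I anticipate is purely bookkeeping: verifying the associativity coherence hexagon in (\ref{eq TenEnCat strength funct}) for $a_{w,z_{\mathfrak{b}},\text{ }\!\!_{\mathfrak{b}}X}$ requires simultaneously quotienting by coequalizers in \emph{three} tensor slots and keeping track of which copies of $\I_w\circledast-$, $\I_{w\circledast z}\circledast-$ preserve which coequalizers. The key technical point that makes this go through is that every functor of the form $w\circledast-\colon\mathcal{D}_0\to\mathcal{D}_0$ preserves coequalizers, so a composite of the relevant $\mathit{cq}$-maps (tensored up by such functors) is still a coequalizer — in particular epi — and one may check all identities after precomposition with it, at which point they are exactly the coherence data of the tensored category $\mathcal{C}$ and the $\mathlcal{C}$-enriched functor $-\circledast X$, already available from \textsection\!~\ref{subsec TenEnCat}. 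No genuinely new coherence needs to be established; everything descends along epimorphisms.
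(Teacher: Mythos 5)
Your overall strategy coincides with the paper's: the proof given there consists precisely of invoking the two reconstruction principles recorded in \textsection~\ref{subsec TenEnCat} --- assemble the $\mathlcal{C}$-enriched functor from the underlying functor of Definition~\ref{def LeftModuleObj MbbTtoT} together with a strength satisfying (\ref{eq TenEnCat strength funct}), and upgrade the family in part (2) to a $\mathlcal{C}$-enriched natural transformation via the criterion (\ref{eq TenEnCat strength nat}) --- with the diagram chases left to the reader. Your way of filling in those chases (define $a_{w,z_{\mathfrak{b}},\text{ }\!\!_{\mathfrak{b}}X}$ by a universal property, construct its inverse from $a_{w,z,X}^{-1}$, and verify naturality, the unit and associativity relations of (\ref{eq TenEnCat strength funct}), the condition (\ref{eq TenEnCat strength nat}), and functoriality by right-cancelling the epimorphisms obtained by tensoring up the maps $\mathit{cq}$) is exactly what is intended.

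There is, however, one point where the justification you supply is not valid under the stated hypotheses, and it is the load-bearing one. You assert that $\I_w\circledast(-)$ preserves coequalizers because $w\circledast-:\mathcal{D}\rightarrow\mathcal{D}$ is a left adjoint. A right adjoint to $w\circledast-$ is a cotensor $\{w,-\}$, and the lemma only assumes that $\mathcal{D}$ is tensored and that $\mathcal{D}_0$ has coequalizers; cotensors are not assumed. For a merely tensored $\mathcal{D}$, an ordinary coequalizer in $\mathcal{D}_0$ need not be a conical (enriched) colimit and need not be preserved by $w\circledast-$; what tensoredness gives for free is the dual fact, that ordinary \emph{limits} in $\mathcal{D}_0$ are conical. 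Since everything in your plan --- the existence half of the universal-property argument for $a_{w,z_{\mathfrak{b}},\text{ }\!\!_{\mathfrak{b}}X}$, the epimorphy of $\I_w\circledast\mathit{cq}_{z_{\mathfrak{b}},\text{ }\!\!_{\mathfrak{b}}X}$ used for uniqueness, and the descent of all coherence identities along such maps --- rests on this preservation, you need either a separate argument that these particular coequalizers are preserved by $w\circledast-$ (equivalently, that they are conical), or an explicit additional hypothesis. To be fair, the paper leans on the same fact without justification: its diagrams mark $\I_w\circledast\mathit{cq}$ as an epimorphism and later proofs right-cancel it. So your reduction is the paper's reduction; the one place where you are more explicit than the paper is the one place where the reason you give does not hold in the stated generality.
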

\begin{proof}
  We leave for the readers to check that
  such isomorphisms $a_{w,z_{\mathfrak{b}},\text{ }\!\!_{\mathfrak{b}}X}$
  in $\mathcal{D}$ uniquely exist,
  and satisfy the relations (\ref{eq TenEnCat strength funct}).
  Thus we have a unique $\mathlcal{C}$-enriched functor
  $-\circledast_{\mathfrak{b}}\text{ }\!\!_{\mathfrak{b}}X:\mathpzc{Mod}\!_{\mathfrak{b}}\rightarrow\mathcal{D}$
  as described in statement 1.
  Statement 2 is also true,
  as we can check that the collection (\ref{eq2 LeftModuleObj awzbbX}) of morphisms in $\mathcal{D}$
  satisfies the relation (\ref{eq TenEnCat strength nat}).
  We conclude that the functor (\ref{eq LeftModuleObj Leftadj}) is well-defined.
\end{proof}

We will show in \textsection~\!\ref{sec EWthm}
that the functor (\ref{eq LeftModuleObj Leftadj}) in Lemma~\ref{lem LeftModuleObj Leftadj}
is the fully faithful left adjoint functor (\ref{eq Intro EWthm})
described in Theorem~\ref{thm Intro EWthm}.

\begin{proposition} \label{prop LeftModuleObj bX Cadj}
  Let $\mathcal{D}$ be a tensored $\mathlcal{C}$-enriched category
  whose underlying category $\mathcal{D}_0$ has coequalizers.
  For each left $\mathfrak{b}$-module object $\text{ }\!\!_{\mathfrak{b}}X$ in $\mathcal{D}$,
  we have a $\mathlcal{C}$-enriched adjunction
  \begin{equation*}
    \xymatrix@C=45pt{
      \mathpzc{Mod}\!_{\mathfrak{b}}
      \ar@<0.5ex>@/^0.8pc/[r]^-{-\text{ }\!\circledast_{\mathfrak{b}}\text{ }\!\!_{\mathfrak{b}}X}
      &\mathcal{D}
      \ar@<0.5ex>@/^0.8pc/[l]^-{\mathcal{D}(\text{ }\!\!_{\mathfrak{b}}X,-)}
    }
    \qquad\quad
    -\circledast_{\mathfrak{b}}\text{ }\!\!_{\mathfrak{b}}X
    \dashv
    \mathcal{D}(\text{ }\!\!_{\mathfrak{b}}X,-)
    :\mathpzc{Mod}\!_{\mathfrak{b}}\rightarrow\mathcal{D}
  \end{equation*}
  whose unit, counit $\mathlcal{C}$-enriched natural transformations
  $\eta$, $\varepsilon$
  are described below.
  \begin{itemize}
    \item
    The component of the unit $\eta$
    at each $z_{\mathfrak{b}}=(z,\gamma_z)\in\Obj(\mathpzc{Mod}\!_{\mathfrak{b}})$
    is the unique morphism
    $\eta_{z_{\mathfrak{b}}}: z_{\mathfrak{b}} \rightarrow
    \mathcal{D}(\text{ }\!\!_{\mathfrak{b}}X,z_{\mathfrak{b}}\!\circledast_{\mathfrak{b}}\! \text{ }\!\!_{\mathfrak{b}}X)$
    in $\mathpzc{Mod}\!_{\mathfrak{b}}$,
    whose corresponding morphism in $\mathcal{C}$ is
    \begin{equation*}
      \eta_{z_{\mathfrak{b}}}:
      \xymatrix@C=40pt{
        z
        \ar[r]^-{\mathit{Cv}_{z,X}}
        &\mathcal{D}(X,z\!\circledast\! X)
        \ar[r]^-{(\mathit{cq}_{z_{\mathfrak{b}},\text{ }\!\!_{\mathfrak{b}}X})_{\star}}
        &\mathcal{D}(X,z_{\mathfrak{b}}\!\circledast_{\mathfrak{b}}\! \text{ }\!\!_{\mathfrak{b}}X)
        .
      }
    \end{equation*}

    \item
    The component of the counit $\varepsilon$
    at each $Y\in\Obj(\mathcal{D})$
    is the unique morphism
    $\varepsilon_Y:\mathcal{D}(\text{ }\!\!_{\mathfrak{b}}X,Y)\circledast_{\mathfrak{b}}\text{ }\!\!_{\mathfrak{b}}X\rightarrow Y$ in $\mathcal{D}$
    which satisfies the relation
    \begin{equation*}
      \vcenter{\hbox{
        \xymatrix@R=15pt@C=50pt{
          \mathcal{D}(X,Y)\!\circledast\! X
          \ar@{->>}[d]_-{\mathit{cq}_{\mathcal{D}(\text{ }\!\!_{\mathfrak{b}}X,Y),\text{ }\!\!_{\mathfrak{b}}X}}
          \ar@/^1.2pc/[dr]^(0.7){\mathit{Ev}_{X,Y}}
          \\
          \mathcal{D}(\text{ }\!\!_{\mathfrak{b}}X,Y)\!\circledast_{\mathfrak{b}}\! \text{ }\!\!_{\mathfrak{b}}X
          \ar@{.>}[r]^(0.6){\exists!\text{ }\varepsilon_{Y}}
          &Y
        }  
      }}
      \qquad\quad
      \mathit{Ev}_{X,Y}
      =
      \varepsilon_{Y}
      \circ
      \mathit{cq}_{\mathcal{D}(\text{ }\!\!_{\mathfrak{b}}X,Y),\text{ }\!\!_{\mathfrak{b}}X}
      .
    \end{equation*}
  \end{itemize}
\end{proposition}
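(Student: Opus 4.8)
The plan is to check, in turn, that the prescribed $\eta$ and $\varepsilon$ are well defined and natural, that they satisfy the two triangle identities, and that they are $\mathlcal{C}$-enriched natural transformations; granting this, since $-\circledast_{\mathfrak{b}}{}_{\mathfrak{b}}X$ and $\mathcal{D}({}_{\mathfrak{b}}X,-)$ are $\mathlcal{C}$-enriched functors (by Lemma~\ref{lem LeftModuleObj Leftadj} and (\ref{eq LeftModuleObj T(bX,-)}) respectively), the ordinary adjunction upgrades automatically to a $\mathlcal{C}$-enriched one. Conceptually the statement is the tensor--Hom adjunction over $\mathfrak{b}$, expressing that $z_{\mathfrak{b}}\circledast_{\mathfrak{b}}{}_{\mathfrak{b}}X$ is the $\mathlcal{C}$-weighted colimit of ${}_{\mathfrak{b}}X$ weighted by $z_{\mathfrak{b}}$, so that $\mathcal{D}(z_{\mathfrak{b}}\circledast_{\mathfrak{b}}{}_{\mathfrak{b}}X,Y)\cong\mathpzc{Mod}\!_{\mathfrak{b}}(z_{\mathfrak{b}},\mathcal{D}({}_{\mathfrak{b}}X,Y))$; since one-object $\mathlcal{C}$-categories are not developed here, I would argue directly, reducing everything to the tensoring adjunction $-\circledast X\dashv\mathcal{D}(X,-)$ (with unit $\mathit{Cv}$, counit $\mathit{Ev}$) and to the universal properties of Definition~\ref{def LeftModuleObj MbbTtoT}.

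For well-definedness of $\varepsilon_Y$, I would show that $\mathit{Ev}_{X,Y}$ coequalizes the parallel pair $\gamma_{\mathcal{D}(X,Y)}\circledast\I_X$, $\I_{\mathcal{D}(X,Y)}\circledast\rho_X$ whose coequalizer is $\mathcal{D}({}_{\mathfrak{b}}X,Y)\circledast_{\mathfrak{b}}{}_{\mathfrak{b}}X$; inserting $\gamma_{\mathcal{D}(X,Y)}=\mu_{X,X,Y}\circ(\I_{\mathcal{D}(X,Y)}\circledast\bar{\rho}_X)$ from (\ref{eq LeftModuleObj T(bX,-)}) and $\rho_X=\mathit{Ev}_{X,X}\circ(\bar{\rho}_X\circledast\I_X)$, this reduces to the standard compatibility of the enriched composition of $\mathcal{D}$ with the counit of the tensoring adjunction, namely $\mathit{Ev}_{X,Y}\circ(\mu_{X,X,Y}\circledast\I_X)=\mathit{Ev}_{X,Y}\circ(\I_{\mathcal{D}(X,Y)}\circledast\mathit{Ev}_{X,X})$ (up to the associator $a$), and the universal property of the coequalizer then produces the unique $\varepsilon_Y$. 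For $\eta_{z_{\mathfrak{b}}}$, since $\mathcal{U}_{z_{\mathfrak{b}},\tilde{z}_{\mathfrak{b}}}$ in (\ref{eq EnCat Mbdef}) is monic, it suffices to check that $(\mathit{cq}_{z_{\mathfrak{b}},{}_{\mathfrak{b}}X})_{\star}\circ\mathit{Cv}_{z,X}$ --- which is the right adjunct $\overline{\mathit{cq}_{z_{\mathfrak{b}},{}_{\mathfrak{b}}X}}$ of the quotient map --- is a morphism of right $\mathfrak{b}$-modules; comparing left adjuncts under $-\circledast X\dashv\mathcal{D}(X,-)$ turns this into the coequalizer relation $\mathit{cq}_{z_{\mathfrak{b}},{}_{\mathfrak{b}}X}\circ(\gamma_z\circledast\I_X)=\mathit{cq}_{z_{\mathfrak{b}},{}_{\mathfrak{b}}X}\circ(\I_z\circledast\rho_X)$, again by the same compatibility. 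Naturality of $\eta$ and $\varepsilon$ is then a routine diagram chase from naturality of $\mathit{Cv}$ and $\mathit{Ev}$, the functoriality relation for $l\circledast_{\mathfrak{b}}\tilde{l}$ in Definition~\ref{def LeftModuleObj MbbTtoT}, the fact that the maps $\mathit{cq}$ are epimorphisms, and faithfulness of $\mathcal{U}$.

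For the triangle identity $\mathcal{D}({}_{\mathfrak{b}}X,\varepsilon_Y)\circ\eta_{\mathcal{D}({}_{\mathfrak{b}}X,Y)}=\I$, I would apply the faithful $\mathcal{U}$ and use $\varepsilon_Y\circ\mathit{cq}=\mathit{Ev}_{X,Y}$ to rewrite the left side as $(\mathit{Ev}_{X,Y})_{\star}\circ\mathit{Cv}_{\mathcal{D}(X,Y),X}$, which is $\I$ by a triangle identity for $-\circledast X\dashv\mathcal{D}(X,-)$. For $\varepsilon_{z_{\mathfrak{b}}\circledast_{\mathfrak{b}}{}_{\mathfrak{b}}X}\circ(\eta_{z_{\mathfrak{b}}}\circledast_{\mathfrak{b}}\I_{{}_{\mathfrak{b}}X})=\I$, I would precompose with the epimorphism $\mathit{cq}_{z_{\mathfrak{b}},{}_{\mathfrak{b}}X}$; unwinding $\eta_{z_{\mathfrak{b}}}\circledast_{\mathfrak{b}}\I_{{}_{\mathfrak{b}}X}$ through the functoriality relation, then $\varepsilon\circ\mathit{cq}=\mathit{Ev}$, then naturality of $\mathit{Ev}$ along $\mathit{cq}_{z_{\mathfrak{b}},{}_{\mathfrak{b}}X}$, and finally the triangle identity $\mathit{Ev}_{X,z\circledast X}\circ(\mathit{Cv}_{z,X}\circledast\I_X)=\I_{z\circledast X}$, collapses the composite to $\mathit{cq}_{z_{\mathfrak{b}},{}_{\mathfrak{b}}X}$, which suffices since $\mathit{cq}$ is epi.

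It remains to verify the strength compatibility (\ref{eq TenEnCat strength nat}) for $\eta$ and $\varepsilon$. For $\varepsilon$, one combines the fact that $\mathit{Ev}$ is already $\mathlcal{C}$-enriched natural with the defining relation of the tensorial strength $a_{w,z_{\mathfrak{b}},{}_{\mathfrak{b}}X}$ of $-\circledast_{\mathfrak{b}}{}_{\mathfrak{b}}X$ from Lemma~\ref{lem LeftModuleObj Leftadj}(1), which relates $\mathit{cq}_{w\circledast z_{\mathfrak{b}},{}_{\mathfrak{b}}X}$, $\mathit{cq}_{z_{\mathfrak{b}},{}_{\mathfrak{b}}X}$ and the associator $a_{w,z,X}$; since $\varepsilon\circ\mathit{cq}=\mathit{Ev}$ and the maps $\mathit{cq}$ are epi, (\ref{eq TenEnCat strength nat}) for $\mathit{Ev}$ descends to $\varepsilon$, and $\eta$ inherits it from $\mathit{Cv}$ similarly. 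I expect the main obstacle to be exactly this bookkeeping of tensorial strengths --- the strength of $-\circledast_{\mathfrak{b}}{}_{\mathfrak{b}}X$ being available only implicitly through a coequalizer --- together with establishing the compatibility $\mathit{Ev}_{X,Y}\circ(\mu_{X,X,Y}\circledast\I_X)=\mathit{Ev}_{X,Y}\circ(\I_{\mathcal{D}(X,Y)}\circledast\mathit{Ev}_{X,X})$ between the enriched composition of $\mathcal{D}$ and the counit of the tensoring; granting these, the rest is a formal reduction to $-\circledast X\dashv\mathcal{D}(X,-)$.
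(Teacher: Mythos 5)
Your proposal is correct and follows essentially the same route as the paper's proof, which simply asserts that the components $\eta_{z_{\mathfrak{b}}}$, $\varepsilon_Y$ are well defined and natural, verifies $\mathlcal{C}$-enriched naturality via the strength compatibility (\ref{eq TenEnCat strength nat}), and leaves the triangle identities to the reader. You fill in exactly those details --- the coequalizer/equalizer verifications reducing to the compatibility of $\mu$ with $\mathit{Ev}$, and the reduction of both triangle identities to those of $-\circledast X\dashv\mathcal{D}(X,-)$ --- all of which are sound.
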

\begin{proof}
  The components $\eta_{z_{\mathfrak{b}}}$, $\varepsilon_Y$
  are well-defined and are natural in variables $z_{\mathfrak{b}}$, $Y$, respectively.
  As their components $\eta_{z_{\mathfrak{b}}}$, $\varepsilon_Y$
  satisfy the relation (\ref{eq TenEnCat strength nat}),
  we obtain $\mathlcal{C}$-enriched natural transformations
  $\eta$, $\varepsilon$.
  We leave for the readers to check that $\eta$, $\varepsilon$ satisfy the triangular identities.
\end{proof}

\section{The Eilenberg-Watts Theorem} \label{sec EWthm}
In this section, we prove 
Theorem~\ref{thm Intro EWthm}
which generalizes the Eilenberg-Watts theorem in enriched context.
We also give a proof of Corollary~\ref{cor Intro EWthm}.
Throughout this section,
$\mathfrak{b}=(b,u_b,m_b)$ is a monoid in $\mathlcal{C}$
and $\mathcal{D}$ is a tensored $\mathlcal{C}$-enriched category
whose underlying category $\mathcal{D}_0$ has coequalizers.  
We are going to show that the functor
\begin{equation*}
  \vcenter{\hbox{
    \xymatrix@R=0pt@C=30pt{
      \text{ }\!\!_{\mathfrak{b}}\mathcal{D}
      \ar[r]^-{(\ref{eq LeftModuleObj Leftadj})}
      &{\mathlcal{C}\text{-}\mathit{Funct}(\mathpzc{Mod}\!_{\mathfrak{b}},\mathcal{D})}
      \\
      \text{ }\!\!_{\mathfrak{b}}X
      \ar@{|->}[r]
      &-\circledast_{\mathfrak{b}}\! \text{ }\!\!_{\mathfrak{b}}X:\mathpzc{Mod}\!_{\mathfrak{b}}\rightarrow\mathcal{D}
    }  
  }}
\end{equation*}
defined in Lemma~\ref{lem LeftModuleObj Leftadj}
is left adjoint to the functor of evaluating at
$b_{\mathfrak{b}}\in\Obj(\mathpzc{Mod}\!_{\mathfrak{b}})$.
Let us explain the right adjoint functor in detail.
Using the properties (\ref{eq TenEnCat strength funct}), (\ref{eq TenEnCat strength nat})
of tensorial strengths associated to $\mathlcal{C}$-enriched functors
$\mathpzc{Mod}\!_{\mathfrak{b}}\rightarrow\mathcal{D}$,
one can check that the following are true.
\begin{itemize}
  \item 
  For each $\mathlcal{C}$-enriched functor $\mathcal{F}:\mathpzc{Mod}\!_{\mathfrak{b}}\rightarrow\mathcal{D}$,
  the object $\mathcal{F}(b_{\mathfrak{b}})$ in $\mathcal{D}$
  becomes a left $\mathfrak{b}$-module object
  $\text{ }\!\!_{\mathfrak{b}}\mathcal{F}(b_{\mathfrak{b}})
  =\bigl(\mathcal{F}(b_{\mathfrak{b}}),\rho_{\mathcal{F}(b_{\mathfrak{b}})}\bigr)$
  in $\mathcal{D}$ whose left $\mathfrak{b}$-action morphism is
  \begin{equation*}
    \rho_{\mathcal{F}(b_{\mathfrak{b}})}:
    \xymatrix@C=30pt{
      b\!\circledast\! \mathcal{F}(b_{\mathfrak{b}})
      \ar[r]^-{t^{\mathcal{F}}_{b,b_{\mathfrak{b}}}}
      &\mathcal{F}(b\!\circledast\! b_{\mathfrak{b}})
      \ar[r]^-{\mathcal{F}(\gamma_{b_{\mathfrak{b}}})}
      &\mathcal{F}(b_{\mathfrak{b}})
      .
    }
  \end{equation*}

  \item
  For each $\mathlcal{C}$-enriched natural transformation
  $\xi:\mathcal{F}\Rightarrow\widetilde{\mathcal{F}}:\mathpzc{Mod}\!_{\mathfrak{b}}\rightarrow\mathcal{D}$,
  the component
  $\xi_{b_{\mathfrak{b}}}:\mathcal{F}(b_{\mathfrak{b}})\rightarrow \widetilde{\mathcal{F}}(b_{\mathfrak{b}})$
  of $\xi$ at $b_{\mathfrak{b}}$
  becomes a morphism
  $\xi_{b_{\mathfrak{b}}}:
  \text{ }\!\!_{\mathfrak{b}}\mathcal{F}(b_{\mathfrak{b}})
  \rightarrow \text{ }\!\!_{\mathfrak{b}}\widetilde{\mathcal{F}}(b_{\mathfrak{b}})$
  of left $\mathfrak{b}$-module objects in $\mathcal{D}$.
\end{itemize}
Thus we obtain a well-defined functor
\begin{equation}\label{eq EWthm rightCadj}
  \vcenter{\hbox{
    \xymatrix@R=0pt{
      {\mathlcal{C}\text{-}\mathit{Funct}(\mathpzc{Mod}\!_{\mathfrak{b}},\mathcal{D})}
      \ar[r]
      &\text{ }\!\!_{\mathfrak{b}}\mathcal{D}
      \\
      \mathcal{F}:\mathpzc{Mod}\!_{\mathfrak{b}}\rightarrow\mathcal{D}
      \text{ }\ar@{|->}[r]
      &\text{ }\!\!_{\mathfrak{b}}\mathcal{F}(b_{\mathfrak{b}})
    }  
  }}
\end{equation}
of evaluating at $b_{\mathfrak{b}}$.

\begin{lemma} \label{lem EWthm lambdaF}
  For each $\mathlcal{C}$-enriched functor $\mathcal{F}:\mathpzc{Mod}\!_{\mathfrak{b}}\rightarrow\mathcal{D}$,
  we have a $\mathlcal{C}$-enriched natural transformation
  $\lambda^{\mathcal{F}}:
  -\!\circledast_{\mathfrak{b}}\! \text{ }\!\!_{\mathfrak{b}}\mathcal{F}(b_{\mathfrak{b}})
  \Rightarrow \mathcal{F}:\mathpzc{Mod}\!_{\mathfrak{b}}\rightarrow\mathcal{D}$
  whose component $\lambda^{\mathcal{F}}_{z_{\mathfrak{b}}}$
  at $z_{\mathfrak{b}}\in\Obj(\mathpzc{Mod}\!_{\mathfrak{b}})$ is the unique morphism in $\mathcal{D}$
  satisfying the relation
  \begin{equation*}
    \vcenter{\hbox{
      \xymatrix@R=15pt@C=30pt{
        z\!\circledast\! \mathcal{F}(b_{\mathfrak{b}})
        \ar@{->>}[d]_-{\mathit{cq}_{z_{\mathfrak{b}},\text{ }\!\!_{\mathfrak{b}}\mathcal{F}(b_{\mathfrak{b}})}}
        \ar[r]^-{t^{\mathcal{F}}_{z,b_{\mathfrak{b}}}}
        &\mathcal{F}(z\!\circledast\! b_{\mathfrak{b}})
        \ar[d]^-{\mathcal{F}(\gamma_{z_{\mathfrak{b}}})}
        \\
        z_{\mathfrak{b}}\!\circledast_{\mathfrak{b}}\! \text{ }\!\!_{\mathfrak{b}}\mathcal{F}(b_{\mathfrak{b}})
        \ar@{.>}[r]^(0.53){\exists!\text{ }\lambda^{\mathcal{F}}_{z_{\mathfrak{b}}}}
        &\mathcal{F}(z_{\mathfrak{b}})
      }  
    }}
    \qquad\quad
    \mathcal{F}(\gamma_{z_{\mathfrak{b}}})
    \circ t^{\mathcal{F}}_{z,b_{\mathfrak{b}}}
    =
    \lambda^{\mathcal{F}}_{z_{\mathfrak{b}}}
    \circ \mathit{cq}_{z_{\mathfrak{b}},\text{ }\!\!_{\mathfrak{b}}\mathcal{F}(b_{\mathfrak{b}})}
    .
  \end{equation*}
  Moreover, the component of $\lambda^{\mathcal{F}}$ at $b_{\mathfrak{b}}$ is given by
    $\lambda^{\mathcal{F}}_{b_{\mathfrak{b}}}
    =\imath^{\mathfrak{b}}_{\text{ }\!\!_{\mathfrak{b}}\mathcal{F}(b_{\mathfrak{b}})}:
    \xymatrix@C=18pt{
      b_{\mathfrak{b}}\!\circledast_{\mathfrak{b}}\! \text{ }\!\!_{\mathfrak{b}}\mathcal{F}(b_{\mathfrak{b}})
      \ar[r]^-{\cong}
      &\mathcal{F}(b_{\mathfrak{b}})
      .
    }$
\end{lemma}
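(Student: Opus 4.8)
The plan is to define $\lambda^{\mathcal{F}}$ componentwise by the universal property of the defining coequalizers, then prove ordinary naturality, upgrade this to $\mathlcal{C}$-enriched naturality using the strong-naturality criterion~(\ref{eq TenEnCat strength nat}), and finally read off the component at $b_{\mathfrak{b}}$.

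\textbf{Existence of the components.} Fix $z_{\mathfrak{b}}=(z,\gamma_z)$. By Definition~\ref{def LeftModuleObj MbbTtoT}, $z_{\mathfrak{b}}\circledast_{\mathfrak{b}}\text{ }\!\!_{\mathfrak{b}}\mathcal{F}(b_{\mathfrak{b}})$ is the coequalizer in $\mathcal{D}_0$ of the parallel pair $\gamma_z\circledast\I_{\mathcal{F}(b_{\mathfrak{b}})}$ and $\I_z\circledast\rho_{\mathcal{F}(b_{\mathfrak{b}})}$, with $\rho_{\mathcal{F}(b_{\mathfrak{b}})}=\mathcal{F}(\gamma_{b_{\mathfrak{b}}})\circ t^{\mathcal{F}}_{b,b_{\mathfrak{b}}}$. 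So it suffices to show that $\mathcal{F}(\gamma_{z_{\mathfrak{b}}})\circ t^{\mathcal{F}}_{z,b_{\mathfrak{b}}}$ coequalizes this pair; the universal property then produces the unique $\lambda^{\mathcal{F}}_{z_{\mathfrak{b}}}$ with $\lambda^{\mathcal{F}}_{z_{\mathfrak{b}}}\circ\mathit{cq}_{z_{\mathfrak{b}},\text{ }\!\!_{\mathfrak{b}}\mathcal{F}(b_{\mathfrak{b}})}=\mathcal{F}(\gamma_{z_{\mathfrak{b}}})\circ t^{\mathcal{F}}_{z,b_{\mathfrak{b}}}$. To see the coequalizing property, I would compose with $\gamma_z\circledast\I_{\mathcal{F}(b_{\mathfrak{b}})}$ and run the following rewrites: naturality of $t^{\mathcal{F}}$ in its $\mathcal{C}$-variable (applied to $\gamma_z:z\circledast b\to z$) gives $t^{\mathcal{F}}_{z,b_{\mathfrak{b}}}\circ(\gamma_z\circledast\I_{\mathcal{F}(b_{\mathfrak{b}})})=\mathcal{F}(\gamma_z\circledast\I_{b_{\mathfrak{b}}})\circ t^{\mathcal{F}}_{z\circledast b,b_{\mathfrak{b}}}$; the coequalizer presentation~(\ref{eq TenEnCat gammazcoequalizer}) of $z_{\mathfrak{b}}$, i.e. $\gamma_{z_{\mathfrak{b}}}\circ(\gamma_z\circledast\I_{b_{\mathfrak{b}}})=\gamma_{z_{\mathfrak{b}}}\circ(\I_z\circledast\gamma_{b_{\mathfrak{b}}})$, trades $\mathcal{F}(\gamma_{z_{\mathfrak{b}}})\circ\mathcal{F}(\gamma_z\circledast\I_{b_{\mathfrak{b}}})$ for $\mathcal{F}(\gamma_{z_{\mathfrak{b}}})\circ\mathcal{F}(\I_z\circledast\gamma_{b_{\mathfrak{b}}})$; the second relation in~(\ref{eq TenEnCat strength funct}) splits $t^{\mathcal{F}}_{z\circledast b,b_{\mathfrak{b}}}$ as $t^{\mathcal{F}}_{z,b\circledast b_{\mathfrak{b}}}\circ(\I_z\circledast t^{\mathcal{F}}_{b,b_{\mathfrak{b}}})$ (up to the associativity isomorphisms $a$, suppressed here as in the paper); and naturality of $t^{\mathcal{F}}$ in its $\mathpzc{Mod}\!_{\mathfrak{b}}$-variable (applied to $\gamma_{b_{\mathfrak{b}}}:b\circledast b_{\mathfrak{b}}\to b_{\mathfrak{b}}$) gives $\mathcal{F}(\I_z\circledast\gamma_{b_{\mathfrak{b}}})\circ t^{\mathcal{F}}_{z,b\circledast b_{\mathfrak{b}}}=t^{\mathcal{F}}_{z,b_{\mathfrak{b}}}\circ(\I_z\circledast\mathcal{F}(\gamma_{b_{\mathfrak{b}}}))$. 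Assembling these, the composite with $\gamma_z\circledast\I_{\mathcal{F}(b_{\mathfrak{b}})}$ becomes $\mathcal{F}(\gamma_{z_{\mathfrak{b}}})\circ t^{\mathcal{F}}_{z,b_{\mathfrak{b}}}\circ(\I_z\circledast\rho_{\mathcal{F}(b_{\mathfrak{b}})})$, which is exactly the composite with $\I_z\circledast\rho_{\mathcal{F}(b_{\mathfrak{b}})}$.

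\textbf{Naturality.} For a morphism $l:z_{\mathfrak{b}}\to\tilde z_{\mathfrak{b}}$ in $\mathpzc{Mod}\!_{\mathfrak{b}}$, I would precompose both legs of the naturality square for the underlying transformation $\lambda^{\mathcal{F}}_0$ with the regular epimorphism $\mathit{cq}_{z_{\mathfrak{b}},\text{ }\!\!_{\mathfrak{b}}\mathcal{F}(b_{\mathfrak{b}})}$ and show both reduce to $\mathcal{F}(\gamma_{\tilde z_{\mathfrak{b}}})\circ t^{\mathcal{F}}_{\tilde z,b_{\mathfrak{b}}}\circ(l\circledast\I_{\mathcal{F}(b_{\mathfrak{b}})})$: the left-bottom leg via the module-map identity $l\circ\gamma_{z_{\mathfrak{b}}}=\gamma_{\tilde z_{\mathfrak{b}}}\circ(l\circledast\I_{b_{\mathfrak{b}}})$ together with naturality of $t^{\mathcal{F}}$ in its $\mathcal{C}$-variable, the top-right leg via the defining relation of $l\circledast_{\mathfrak{b}}\I$ from Definition~\ref{def LeftModuleObj MbbTtoT}. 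Since $\mathit{cq}_{z_{\mathfrak{b}},\text{ }\!\!_{\mathfrak{b}}\mathcal{F}(b_{\mathfrak{b}})}$ is epic, the square commutes.

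\textbf{Enriched structure and the component at $b_{\mathfrak{b}}$.} By the correspondence between $\mathlcal{C}$-enriched and strong natural transformations recorded after~(\ref{eq TenEnCat strength funct}), it remains to check that $\{\lambda^{\mathcal{F}}_{z_{\mathfrak{b}}}\}$ satisfies~(\ref{eq TenEnCat strength nat}), where the tensorial strength of $-\circledast_{\mathfrak{b}}\text{ }\!\!_{\mathfrak{b}}\mathcal{F}(b_{\mathfrak{b}})$ is the isomorphism $a_{w,z_{\mathfrak{b}},\text{ }\!\!_{\mathfrak{b}}\mathcal{F}(b_{\mathfrak{b}})}$ of Lemma~\ref{lem LeftModuleObj Leftadj}. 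Precomposing the two sides of~(\ref{eq TenEnCat strength nat}) with the epimorphism $\I_w\circledast\mathit{cq}_{z_{\mathfrak{b}},\text{ }\!\!_{\mathfrak{b}}\mathcal{F}(b_{\mathfrak{b}})}$ (the coequalizer used in Lemma~\ref{lem LeftModuleObj Leftadj}), the left-hand side collapses through the defining relations of $a_{w,z_{\mathfrak{b}},\text{ }\!\!_{\mathfrak{b}}\mathcal{F}(b_{\mathfrak{b}})}$ and of $\lambda^{\mathcal{F}}$, the right-hand side through naturality of $t^{\mathcal{F}}$ in its $\mathpzc{Mod}\!_{\mathfrak{b}}$-variable and the second relation in~(\ref{eq TenEnCat strength funct}); both equal $\mathcal{F}(\gamma_{w\circledast z_{\mathfrak{b}}})\circ t^{\mathcal{F}}_{w\circledast z,b_{\mathfrak{b}}}$ after suppressing the $a$'s, so $\lambda^{\mathcal{F}}$ is $\mathlcal{C}$-enriched. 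Finally, taking $z_{\mathfrak{b}}=b_{\mathfrak{b}}$ in the defining relation gives $\lambda^{\mathcal{F}}_{b_{\mathfrak{b}}}\circ\mathit{cq}_{b_{\mathfrak{b}},\text{ }\!\!_{\mathfrak{b}}\mathcal{F}(b_{\mathfrak{b}})}=\mathcal{F}(\gamma_{b_{\mathfrak{b}}})\circ t^{\mathcal{F}}_{b,b_{\mathfrak{b}}}=\rho_{\mathcal{F}(b_{\mathfrak{b}})}$, which is precisely the relation characterizing $\imath^{\mathfrak{b}}_{\text{ }\!\!_{\mathfrak{b}}\mathcal{F}(b_{\mathfrak{b}})}$ in~(\ref{eq LeftModuleObj imathbbX}); hence $\lambda^{\mathcal{F}}_{b_{\mathfrak{b}}}=\imath^{\mathfrak{b}}_{\text{ }\!\!_{\mathfrak{b}}\mathcal{F}(b_{\mathfrak{b}})}$. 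The one genuinely laborious step is the coequalizing verification above: it is the unique place where all of the tensorial-strength axioms~(\ref{eq TenEnCat strength funct}), naturality of $t^{\mathcal{F}}$ in both variables, and the presentation~(\ref{eq TenEnCat gammazcoequalizer}) of right $\mathfrak{b}$-modules intervene simultaneously, and the main nuisance is consistently tracking (or, as here, suppressing) the associativity isomorphisms $a_{w,z,X}$.
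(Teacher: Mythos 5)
Your proposal is correct and follows essentially the same route as the paper: define each component via the universal property of the coequalizer (the coequalizing verification you spell out is exactly the commutative diagram the paper leaves to the reader), establish ordinary naturality by cancelling the regular epimorphism $\mathit{cq}_{z_{\mathfrak{b}},\text{ }\!\!_{\mathfrak{b}}\mathcal{F}(b_{\mathfrak{b}})}$, upgrade to $\mathlcal{C}$-enriched naturality by checking the strong-naturality square (\ref{eq TenEnCat strength nat}) after precomposing with $\I_w\circledast\mathit{cq}_{z_{\mathfrak{b}},\text{ }\!\!_{\mathfrak{b}}\mathcal{F}(b_{\mathfrak{b}})}$, and identify $\lambda^{\mathcal{F}}_{b_{\mathfrak{b}}}$ with $\imath^{\mathfrak{b}}_{\text{ }\!\!_{\mathfrak{b}}\mathcal{F}(b_{\mathfrak{b}})}$ from its defining relation. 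If anything, your write-up supplies more detail than the paper does for the existence step, and the details check out.
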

\begin{proof}
  We leave for the readers to check that 
  such morphism $\lambda^{\mathcal{F}}_{z_{\mathfrak{b}}}$ in $\mathcal{D}$ uniquely exists,
  and that the following diagram of morphisms in $\mathcal{D}$ commutes.
  \begin{equation}\label{eq EWthm lambdaF}
    \vcenter{\hbox{
      \xymatrix@R=20pt@C=70pt{
        z\!\circledast\! b\!\circledast\! \mathcal{F}(b_{\mathfrak{b}})
        \ar@<0.5ex>[d]^-{\I_z\circledast\rho_{\mathcal{F}(b_{\mathfrak{b}})}}
        \ar@<-0.5ex>[d]_-{\gamma_z\circledast\I_{\mathcal{F}(b_{\mathfrak{b}})}}
        \ar[r]^-{t^{\mathcal{F}}_{z\circledast b,b_{\mathfrak{b}}}}
        &\mathcal{F}(z\!\circledast\! b\!\circledast\! b_{\mathfrak{b}})
        \ar@<0.5ex>[d]^-{\mathcal{F}(\I_z\circledast \gamma_{b_{\mathfrak{b}}})}
        \ar@<-0.5ex>[d]_-{\mathcal{F}(\gamma_z\circledast\I_{b_{\mathfrak{b}}})}
        \\
        z\!\circledast\! \mathcal{F}(b_{\mathfrak{b}})
        \ar@{->>}[d]_-{\mathit{cq}_{z_{\mathfrak{b}},\text{ }\!\!_{\mathfrak{b}}\mathcal{F}(b_{\mathfrak{b}})}}
        \ar[r]^-{t^{\mathcal{F}}_{z,b_{\mathfrak{b}}}}
        &\mathcal{F}(z\!\circledast\! b_{\mathfrak{b}})
        \ar[d]^-{\mathcal{F}(\gamma_{z_{\mathfrak{b}}})}
        \\
        z_{\mathfrak{b}}\!\circledast_{\mathfrak{b}}\! \text{ }\!\!_{\mathfrak{b}}\mathcal{F}(b_{\mathfrak{b}})
        \ar@{.>}[r]^-{\exists!\text{ }\lambda^{\mathcal{F}}_{z_{\mathfrak{b}}}}
        &\mathcal{F}(z_{\mathfrak{b}})
      }
    }}
  \end{equation}
  The collection $\{\lambda^{\mathcal{F}}_{z_{\mathfrak{b}}}\}$ of morphisms in $\mathcal{D}$
  is natural in variable $z_{\mathfrak{b}}$.
  To show that the collection $\{\lambda^{\mathcal{F}}_{z_{\mathfrak{b}}}\}$
  is $\mathlcal{C}$-enriched natural in variable $z_{\mathfrak{b}}$,
  we need to verify the following relation for every pair $w\in\Obj(\mathcal{C})$,
  $z_{\mathfrak{b}}\in\Obj(\mathpzc{Mod}\!_{\mathfrak{b}})$.
  \begin{equation}\label{eq2 EWthm lambdaF}
    \vcenter{\hbox{
      \xymatrix@R=15pt@C=50pt{
        w\!\circledast\! \bigl(z_{\mathfrak{b}}\!\circledast_{\mathfrak{b}}\! \text{ }\!\!_{\mathfrak{b}}\mathcal{F}(b_{\mathfrak{b}})\bigr)
        \ar[d]_-{\I_w\circledast \lambda^{\mathcal{F}}_{z_{\mathfrak{b}}}}
        \ar[r]^-{a_{w,z_{\mathfrak{b}},\text{ }\!\!_{\mathfrak{b}}\mathcal{F}(b_{\mathfrak{b}})}}_-{\cong}
        &(w\!\circledast\! z_{\mathfrak{b}})\!\circledast_{\mathfrak{b}}\! \text{ }\!\!_{\mathfrak{b}}\mathcal{F}(b_{\mathfrak{b}})
        \ar[d]^-{\lambda^{\mathcal{F}}_{w\circledast z_{\mathfrak{b}}}}
        \\
        w\!\circledast\! \mathcal{F}(z_{\mathfrak{b}})
        \ar[r]^-{t^{\mathcal{F}}_{w,z_{\mathfrak{b}}}}
        &\mathcal{F}(w\!\circledast\! z_{\mathfrak{b}})
      }
    }}
  \end{equation}
  Consider the following commutative diagram.
  \begin{equation*}
    \hspace*{-1cm}
    \xymatrix@R=12pt@C=25pt{
      &w\!\circledast\! \bigl(z\!\circledast\! \mathcal{F}(b_{\mathfrak{b}})\bigr)
      \ar@{->>}[d]^-{\textcolor{teal}{\I_w\circledast \mathit{cq}_{z_{\mathfrak{b}},\text{ }\!\!_{\mathfrak{b}}\mathcal{F}(b_{\mathfrak{b}})}}}
      \ar@{=}[r]
      &w\!\circledast\! \bigl(z\!\circledast\! \mathcal{F}(b_{\mathfrak{b}})\bigr)
      \ar[dd]^-{a_{w,z,\mathcal{F}(b_{\mathfrak{b}})}}_-{\cong}
      \ar@{=}[r]
      &w\!\circledast\! \bigl(z\!\circledast\! \mathcal{F}(b_{\mathfrak{b}})\bigr)
      \ar[d]^-{\I_w\circledast t^{\mathcal{F}}_{z,b_{\mathfrak{b}}}}
      \ar@{=}[r]
      &w\!\circledast\! \bigl(z\!\circledast\! \mathcal{F}(b_{\mathfrak{b}})\bigr)
      \ar@{->>}[d]^-{\textcolor{teal}{\I_w\circledast \mathit{cq}_{z_{\mathfrak{b}},\text{ }\!\!_{\mathfrak{b}}\mathcal{F}(b_{\mathfrak{b}})}}}
      \\
      &w\!\circledast\! \bigl(z_{\mathfrak{b}}\!\circledast_{\mathfrak{b}}\! \text{ }\!\!_{\mathfrak{b}}\mathcal{F}(b_{\mathfrak{b}})\!\bigr)
      \ar[dd]^-{a_{w,z_{\mathfrak{b}},\text{ }\!\!_{\mathfrak{b}}\mathcal{F}(b_{\mathfrak{b}})}}_-{\cong}
      &\text{ }
      &w\!\circledast\! \mathcal{F}(z\!\circledast\! b_{\mathfrak{b}})
      \ar[d]^-{t^{\mathcal{F}}_{w,z\circledast b_{\mathfrak{b}}}}
      \ar@/^0.5pc/[dr]|-{\I_w\circledast \mathcal{F}(\gamma_{z_{\mathfrak{b}}})}
      &w\!\circledast\! \bigl(z_{\mathfrak{b}}\!\circledast_{\mathfrak{b}}\! \text{ }\!\!_{\mathfrak{b}}\mathcal{F}(b_{\mathfrak{b}})\!\bigr)
      \ar[d]^-{\I_w\circledast \lambda^{\mathcal{F}}_{z_{\mathfrak{b}}}}
      \\
      &\text{ }
      &(w\!\circledast\! z)\!\circledast\! \mathcal{F}(b_{\mathfrak{b}})
      \ar@/^0.5pc/@{->>}[dl]|-{\mathit{cq}_{w\circledast z_{\mathfrak{b}},\mathcal{F}(b_{\mathfrak{b}})}}
      \ar@/_0.5pc/[dr]|-{t^{\mathcal{F}}_{w\circledast z,b_{\mathfrak{b}}}}
      &\mathcal{F}\bigl(w\!\circledast\! (z\!\circledast\! b_{\mathfrak{b}})\bigr)
      \ar[d]^-{\mathcal{F}(a_{w,z,b_{\mathfrak{b}}})}_-{\cong}
      \ar@/^1.5pc/[ddr]|-{\mathcal{F}(\I_w\circledast \gamma_{z_{\mathfrak{b}}})}
      &w\!\circledast\! \mathcal{F}(z_{\mathfrak{b}})
      \ar[dd]^-{t^{\mathcal{F}}_{w,z_{\mathfrak{b}}}}
      \\
      &(w\!\circledast\! z_{\mathfrak{b}})\!\circledast_{\mathfrak{b}}\! \text{ }\!\!_{\mathfrak{b}}\mathcal{F}(b_{\mathfrak{b}})
      \ar[d]^-{\lambda^{\mathcal{F}}_{w\circledast z_{\mathfrak{b}}}}
      &\text{ }
      &\mathcal{F}\bigl((w\!\circledast\! z)\!\circledast\! b_{\mathfrak{b}}\bigr)
      \ar[d]^-{\mathcal{F}(\gamma_{w\circledast z_{\mathfrak{b}}})}
      &\text{ }
      \\
      &\mathcal{F}(w\!\circledast\! z_{\mathfrak{b}})
      \ar@{=}[rr]
      &\text{ }
      &\mathcal{F}(w\!\circledast\! z_{\mathfrak{b}})
      \ar@{=}[r]
      &\mathcal{F}(w\!\circledast\! z_{\mathfrak{b}})
    }
  \end{equation*}
  After right-cancelling the epimorphism
  $\textcolor{teal}{\I_w\circledast \mathit{cq}_{z_{\mathfrak{b}},\text{ }\!\!_{\mathfrak{b}}\mathcal{F}(b_{\mathfrak{b}})}}$
  in the above diagram, we obtain the relation (\ref{eq2 EWthm lambdaF}).
  Thus we have a well-defined $\mathlcal{C}$-enriched natural transformation $\lambda^{\mathcal{F}}$
  as we claimed.
  From the definition of
  $\imath^{\mathfrak{b}}_{\text{ }\!\!_{\mathfrak{b}}\mathcal{F}(b_{\mathfrak{b}})}$
  given in (\ref{eq LeftModuleObj imathbbX}),
  we obtain that
  $\lambda^{\mathcal{F}}_{b_{\mathfrak{b}}}
  =\imath^{\mathfrak{b}}_{\text{ }\!\!_{\mathfrak{b}}\mathcal{F}(b_{\mathfrak{b}})}:
  b_{\mathfrak{b}}\circledast_{\mathfrak{b}} \text{ }\!\!_{\mathfrak{b}}\mathcal{F}(b_{\mathfrak{b}})
  \xrightarrow[]{\cong}\mathcal{F}(b_{\mathfrak{b}})$.
\end{proof}

\begin{proposition} \label{prop EWthm lambdaF}
  For each $\mathlcal{C}$-enriched functor
  $\mathcal{F}:\mathpzc{Mod}\!_{\mathfrak{b}}\rightarrow\mathcal{D}$,
  the following are equivalent:
  \begin{itemize}
    \item[(i)]
    $\mathcal{F}:\mathpzc{Mod}\!_{\mathfrak{b}}\rightarrow\mathcal{D}$ is a $\mathlcal{C}$-enriched left adjoint;

    \item[(ii)]
    $\mathcal{F}:\mathpzc{Mod}\!_{\mathfrak{b}}\rightarrow\mathcal{D}$
    is $\mathlcal{C}$-enriched cocontinuous;

    \item[(iii)]
    $\mathcal{F}:\mathpzc{Mod}\!_{\mathfrak{b}}\rightarrow\mathcal{D}$ 
    preserves $\mathlcal{C}$-tensors,
    and the underlying functor $\mathcal{F}_0$
    preserves coequalizers;

    \item[(iv)]
    The $\mathlcal{C}$-enriched natural transformation
    $\lambda^{\mathcal{F}}:-\circledast_{\mathfrak{b}}\text{ }\!\!_{\mathfrak{b}}\mathcal{F}(b_{\mathfrak{b}})\Rightarrow \mathcal{F}:\mathpzc{Mod}\!_{\mathfrak{b}}\rightarrow \mathcal{D}$
    defined in Lemma~\ref{lem EWthm lambdaF}
    is invertible.
  \end{itemize}
\end{proposition}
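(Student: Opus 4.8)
The plan is to prove the cycle of implications $\mathrm{(i)}\Rightarrow\mathrm{(ii)}\Rightarrow\mathrm{(iii)}\Rightarrow\mathrm{(iv)}\Rightarrow\mathrm{(i)}$, with $\mathrm{(iii)}\Rightarrow\mathrm{(iv)}$ being the crux. For $\mathrm{(i)}\Rightarrow\mathrm{(ii)}$ one invokes only the enriched form of ``left adjoints preserve colimits'': a $\mathlcal{C}$-enriched left adjoint preserves every $\mathlcal{C}$-weighted colimit that exists in $\mathpzc{Mod}\!_{\mathfrak{b}}$, which is by definition $\mathlcal{C}$-enriched cocontinuity. For $\mathrm{(ii)}\Rightarrow\mathrm{(iii)}$ I would observe that $\mathlcal{C}$-tensors and coequalizers are particular $\mathlcal{C}$-weighted colimits — the tensor $z\circledast X$ being the colimit of the point-diagram $X$ with weight $z$, and a coequalizer in $\mathcal{D}_0$ being an enriched conical colimit (which exists because $\mathcal{D}$ is tensored and $\mathcal{D}_0$ has coequalizers) — so a $\mathlcal{C}$-enriched cocontinuous $\mathcal{F}$ preserves $\mathlcal{C}$-tensors, i.e. every tensorial strength $t^{\mathcal{F}}_{z,X}$ is invertible, and its underlying functor $\mathcal{F}_0$ preserves coequalizers.

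For $\mathrm{(iii)}\Rightarrow\mathrm{(iv)}$, fix $z_{\mathfrak{b}}=(z,\gamma_z)\in\Obj(\mathpzc{Mod}\!_{\mathfrak{b}})$ and consider diagram~$(\ref{eq EWthm lambdaF})$ from the proof of Lemma~\ref{lem EWthm lambdaF}. Its left-hand column is the coequalizer diagram defining $z_{\mathfrak{b}}\circledast_{\mathfrak{b}}\text{ }\!\!_{\mathfrak{b}}\mathcal{F}(b_{\mathfrak{b}})$ in Definition~\ref{def LeftModuleObj MbbTtoT}; its right-hand column is obtained by applying $\mathcal{F}$ to the coequalizer diagram $(\ref{eq TenEnCat gammazcoequalizer})$ in $(\mathpzc{Mod}\!_{\mathfrak{b}})_0$, hence, since $\mathcal{F}_0$ preserves coequalizers under hypothesis $\mathrm{(iii)}$, is again a coequalizer diagram, now in $\mathcal{D}_0$. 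The horizontal maps joining these two columns are the tensorial strengths $t^{\mathcal{F}}_{z\circledast b,b_{\mathfrak{b}}}$ and $t^{\mathcal{F}}_{z,b_{\mathfrak{b}}}$ at the top two levels and $\lambda^{\mathcal{F}}_{z_{\mathfrak{b}}}$ at the bottom, and commutativity of $(\ref{eq EWthm lambdaF})$ says they constitute a morphism of coequalizer diagrams. By $\mathrm{(iii)}$ the two strengths are isomorphisms, so the induced comparison map on coequalizers — which is exactly $\lambda^{\mathcal{F}}_{z_{\mathfrak{b}}}$ by the defining relation $\mathcal{F}(\gamma_{z_{\mathfrak{b}}})\circ t^{\mathcal{F}}_{z,b_{\mathfrak{b}}}=\lambda^{\mathcal{F}}_{z_{\mathfrak{b}}}\circ \mathit{cq}_{z_{\mathfrak{b}},\text{ }\!\!_{\mathfrak{b}}\mathcal{F}(b_{\mathfrak{b}})}$ — is an isomorphism. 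Since $z_{\mathfrak{b}}$ was arbitrary, $\lambda^{\mathcal{F}}$ is invertible.

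Finally, for $\mathrm{(iv)}\Rightarrow\mathrm{(i)}$: Proposition~\ref{prop LeftModuleObj bX Cadj} applied to $\text{ }\!\!_{\mathfrak{b}}\mathcal{F}(b_{\mathfrak{b}})$ exhibits $-\circledast_{\mathfrak{b}}\text{ }\!\!_{\mathfrak{b}}\mathcal{F}(b_{\mathfrak{b}})$ as a $\mathlcal{C}$-enriched left adjoint (with right adjoint $\mathcal{D}(\text{ }\!\!_{\mathfrak{b}}\mathcal{F}(b_{\mathfrak{b}}),-)$), and $\lambda^{\mathcal{F}}$ is a $\mathlcal{C}$-enriched natural isomorphism from this functor to $\mathcal{F}$; transporting the adjunction along $\lambda^{\mathcal{F}}$ shows $\mathcal{F}$ is itself a $\mathlcal{C}$-enriched left adjoint. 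I expect the main obstacle to be the bookkeeping in $\mathrm{(iii)}\Rightarrow\mathrm{(iv)}$, namely confirming that $(\ref{eq EWthm lambdaF})$ really does present a morphism of coequalizer diagrams with induced map $\lambda^{\mathcal{F}}_{z_{\mathfrak{b}}}$, together with the routine-but-technical fact used in $\mathrm{(ii)}\Rightarrow\mathrm{(iii)}$ that $\mathlcal{C}$-enriched cocontinuity unpacks into preservation of $\mathlcal{C}$-tensors plus preservation of conical colimits.
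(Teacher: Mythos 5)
Your proposal is correct and follows essentially the same route as the paper: the cycle (i)$\Rightarrow$(ii)$\Rightarrow$(iii)$\Rightarrow$(iv)$\Rightarrow$(i), with (iv)$\Rightarrow$(i) obtained from Proposition~\ref{prop LeftModuleObj bX Cadj} and (iii)$\Rightarrow$(iv) obtained by reading diagram~(\ref{eq EWthm lambdaF}) as a morphism of coequalizer diagrams whose top two horizontal maps are invertible tensorial strengths and whose right column is $\mathcal{F}_0$ applied to the coequalizer~(\ref{eq TenEnCat gammazcoequalizer}). The only difference is cosmetic (the paper states the cycle starting at (iv)), so no further comment is needed.
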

\begin{proof}      
  By Proposition~\ref{prop LeftModuleObj bX Cadj}, 
  (iv) implies (i).
  It is straightforward that (i) implies (ii),
  and (ii) implies (iii).
  We claim that (iii) implies (iv).
  Assume that the $\mathlcal{C}$-enriched functor
  $\mathcal{F}:\mathpzc{Mod}\!_{\mathfrak{b}}\rightarrow\mathcal{D}$
  preserves $\mathlcal{C}$-tensors,
  and the underlying functor
  $\mathcal{F}_0:(\mathpzc{Mod}\!_{\mathfrak{b}})_0\to \mathcal{D}_0$ preserves coequalizers.
  Recall the coequalizer diagram (\ref{eq TenEnCat gammazcoequalizer}) in $(\mathpzc{Mod}\!_{\mathfrak{b}})_0$.
  If we look at the diagram in (\ref{eq EWthm lambdaF}) we see that
  the top, middle horizontal morphisms in $\mathcal{D}$ are isomorphisms,
  and
  the right vertical morphisms also form a coequalizer diagram in the underlying category $\mathcal{D}_0$ of $\mathcal{D}$.
  This shows that $\lambda^{\mathcal{F}}_{z_{\mathfrak{b}}}$
  is an isomorphism in $\mathcal{D}$ for every $z_{\mathfrak{b}}\in\Obj(\mathpzc{Mod}\!_{\mathfrak{b}})$.
  We conclude that the $\mathlcal{C}$-enriched natural transformation $\lambda^{\mathcal{F}}$ is invertible.
\end{proof}

\begin{lemma} \label{lem EWthm lambdaF naturalinF}
  Let 
  $\mathcal{F}$, $\widetilde{\mathcal{F}}:\mathpzc{Mod}\!_{\mathfrak{b}}\rightarrow\mathcal{D}$
  be $\mathlcal{C}$-enriched functors.
  For each $\mathlcal{C}$-enriched natural transformation
  $\xi:\mathcal{F}\Rightarrow\widetilde{\mathcal{F}}:\mathpzc{Mod}\!_{\mathfrak{b}}\rightarrow\mathcal{D}$,
  we have the following relation for every $z_{\mathfrak{b}}\in \Obj(\mathpzc{Mod}\!_{\mathfrak{b}})$.
  \begin{equation} \label{eq EWthm lambdaF naturalinF}
    \vcenter{\hbox{
      \xymatrix@R=20pt@C=35pt{
        z_{\mathfrak{b}}\!\circledast_{\mathfrak{b}}\! \text{ }\!\!_{\mathfrak{b}}\mathcal{F}(b_{\mathfrak{b}})
        \ar[d]_-{\I_{z_{\mathfrak{b}}}\circledast_{\mathfrak{b}}\xi_{b_{\mathfrak{b}}}}
        \ar[r]^-{\lambda^{\mathcal{F}}_{z_{\mathfrak{b}}}}
        &\mathcal{F}(z_{\mathfrak{b}})
        \ar[d]^-{\xi_{z_{\mathfrak{b}}}}
        \\
        z_{\mathfrak{b}}\!\circledast_{\mathfrak{b}}\! \text{ }\!\!_{\mathfrak{b}}\widetilde{\mathcal{F}}(b_{\mathfrak{b}})
        \ar[r]^-{\lambda^{\widetilde{\mathcal{F}}}_{z_{\mathfrak{b}}}}
        &\widetilde{\mathcal{F}}(z_{\mathfrak{b}})
      }
    }}
  \end{equation}
\end{lemma}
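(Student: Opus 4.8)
The plan is to use that $\mathit{cq}_{z_{\mathfrak{b}},\,{}_{\mathfrak{b}}\mathcal{F}(b_{\mathfrak{b}})}\colon z\circledast\mathcal{F}(b_{\mathfrak{b}})\rightarrow z_{\mathfrak{b}}\circledast_{\mathfrak{b}}{}_{\mathfrak{b}}\mathcal{F}(b_{\mathfrak{b}})$ is a coequalizer, hence an epimorphism in $\mathcal{D}_0$, so that to verify the square (\ref{eq EWthm lambdaF naturalinF}) it suffices to show its two legs agree after precomposition with this map. First one notes that the bottom-left arrow $\I_{z_{\mathfrak{b}}}\circledast_{\mathfrak{b}}\xi_{b_{\mathfrak{b}}}$ is legitimate: by the observation recorded just before Lemma~\ref{lem EWthm lambdaF}, $\xi_{b_{\mathfrak{b}}}$ is a morphism ${}_{\mathfrak{b}}\mathcal{F}(b_{\mathfrak{b}})\rightarrow{}_{\mathfrak{b}}\widetilde{\mathcal{F}}(b_{\mathfrak{b}})$ of left $\mathfrak{b}$-module objects in $\mathcal{D}$, so $\I_{z_{\mathfrak{b}}}\circledast_{\mathfrak{b}}\xi_{b_{\mathfrak{b}}}$ is defined via Definition~\ref{def LeftModuleObj MbbTtoT}.

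For the clockwise leg, I would use the defining relation of $\lambda^{\mathcal{F}}_{z_{\mathfrak{b}}}$ in Lemma~\ref{lem EWthm lambdaF} to rewrite $\xi_{z_{\mathfrak{b}}}\circ\lambda^{\mathcal{F}}_{z_{\mathfrak{b}}}\circ\mathit{cq}_{z_{\mathfrak{b}},\,{}_{\mathfrak{b}}\mathcal{F}(b_{\mathfrak{b}})}$ as $\xi_{z_{\mathfrak{b}}}\circ\mathcal{F}(\gamma_{z_{\mathfrak{b}}})\circ t^{\mathcal{F}}_{z,b_{\mathfrak{b}}}$. Then two rewrites do the job: ordinary naturality of the underlying transformation $\xi_0$ applied to the morphism $\gamma_{z_{\mathfrak{b}}}\colon z\circledast b_{\mathfrak{b}}\rightarrow z_{\mathfrak{b}}$ in $\mathpzc{Mod}\!_{\mathfrak{b}}$ turns this into $\widetilde{\mathcal{F}}(\gamma_{z_{\mathfrak{b}}})\circ\xi_{z\circledast b_{\mathfrak{b}}}\circ t^{\mathcal{F}}_{z,b_{\mathfrak{b}}}$, and the strength-compatibility relation (\ref{eq TenEnCat strength nat}) for $\xi$ at the pair $z$, $b_{\mathfrak{b}}$ turns this into $\widetilde{\mathcal{F}}(\gamma_{z_{\mathfrak{b}}})\circ t^{\widetilde{\mathcal{F}}}_{z,b_{\mathfrak{b}}}\circ(\I_z\circledast\xi_{b_{\mathfrak{b}}})$. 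Applying the defining relation of $\lambda^{\widetilde{\mathcal{F}}}_{z_{\mathfrak{b}}}$ in the reverse direction rewrites this as $\lambda^{\widetilde{\mathcal{F}}}_{z_{\mathfrak{b}}}\circ\mathit{cq}_{z_{\mathfrak{b}},\,{}_{\mathfrak{b}}\widetilde{\mathcal{F}}(b_{\mathfrak{b}})}\circ(\I_z\circledast\xi_{b_{\mathfrak{b}}})$.

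Finally I would invoke the functoriality clause of Definition~\ref{def LeftModuleObj MbbTtoT} applied to the pair $\I_{z_{\mathfrak{b}}}$, $\xi_{b_{\mathfrak{b}}}$, namely $\mathit{cq}_{z_{\mathfrak{b}},\,{}_{\mathfrak{b}}\widetilde{\mathcal{F}}(b_{\mathfrak{b}})}\circ(\I_z\circledast\xi_{b_{\mathfrak{b}}})=(\I_{z_{\mathfrak{b}}}\circledast_{\mathfrak{b}}\xi_{b_{\mathfrak{b}}})\circ\mathit{cq}_{z_{\mathfrak{b}},\,{}_{\mathfrak{b}}\mathcal{F}(b_{\mathfrak{b}})}$, to identify the above with $\lambda^{\widetilde{\mathcal{F}}}_{z_{\mathfrak{b}}}\circ(\I_{z_{\mathfrak{b}}}\circledast_{\mathfrak{b}}\xi_{b_{\mathfrak{b}}})\circ\mathit{cq}_{z_{\mathfrak{b}},\,{}_{\mathfrak{b}}\mathcal{F}(b_{\mathfrak{b}})}$, which is exactly the counterclockwise leg of (\ref{eq EWthm lambdaF naturalinF}) precomposed with $\mathit{cq}_{z_{\mathfrak{b}},\,{}_{\mathfrak{b}}\mathcal{F}(b_{\mathfrak{b}})}$. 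Cancelling the epimorphism concludes the argument. There is no substantial obstacle here; the only care needed is to keep the two defining relations for $\lambda$ attached to the correct coequalizer maps (those for ${}_{\mathfrak{b}}\mathcal{F}(b_{\mathfrak{b}})$ versus ${}_{\mathfrak{b}}\widetilde{\mathcal{F}}(b_{\mathfrak{b}})$) and to route through $\xi_{z\circledast b_{\mathfrak{b}}}$ rather than directly through $\xi_{z_{\mathfrak{b}}}$, which is precisely what forces the combined use of ordinary naturality and (\ref{eq TenEnCat strength nat}).
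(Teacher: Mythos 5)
Your argument is correct and is essentially the paper's own proof: the paper establishes the same five identities (the two defining relations for $\lambda^{\mathcal{F}}$ and $\lambda^{\widetilde{\mathcal{F}}}$, ordinary naturality of $\xi_0$ at $\gamma_{z_{\mathfrak{b}}}$, the strength relation (\ref{eq TenEnCat strength nat}) at $z$, $b_{\mathfrak{b}}$, and the functoriality of $-\circledast_{\mathfrak{b}}-$ against the coequalizer maps) as one large commutative diagram and then right-cancels the epimorphism $\mathit{cq}_{z_{\mathfrak{b}},\,{}_{\mathfrak{b}}\mathcal{F}(b_{\mathfrak{b}})}$, exactly as you do. Your equational presentation, including the check that $\xi_{b_{\mathfrak{b}}}$ is a map of left $\mathfrak{b}$-module objects so that $\I_{z_{\mathfrak{b}}}\circledast_{\mathfrak{b}}\xi_{b_{\mathfrak{b}}}$ is defined, matches the paper's reasoning step for step.
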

\begin{proof}
  For each $z_{\mathfrak{b}}=(z,\gamma_z)\in\Obj(\mathpzc{Mod}\!_{\mathfrak{b}})$,
  we have the following diagram.
  \begin{equation*}
    \hspace*{-1.1cm}
    \xymatrix@R=15pt@C=35pt{
      &z\!\circledast\! \mathcal{F}(b_{\mathfrak{b}})
      \ar@{->>}[d]^-{\textcolor{teal}{\mathit{cq}_{z_{\mathfrak{b}},\text{ }\!\!_{\mathfrak{b}}\mathcal{F}(b_{\mathfrak{b}})}}}
      \ar@{=}[r]
      &z\!\circledast\! \mathcal{F}(b_{\mathfrak{b}})
      \ar[d]^-{t^{\mathcal{F}}_{z,b_{\mathfrak{b}}}}
      \ar@{=}[r]
      &z\!\circledast\! \mathcal{F}(b_{\mathfrak{b}})
      \ar[d]^-{\I_z\circledast \xi_{b_{\mathfrak{b}}}}
      \ar@{=}[r]
      &z\!\circledast\! \mathcal{F}(b_{\mathfrak{b}})
      \ar@{->>}[d]^-{\textcolor{teal}{\mathit{cq}_{z_{\mathfrak{b}},\text{ }\!\!_{\mathfrak{b}}\mathcal{F}(b_{\mathfrak{b}})}}}
      \\
      &z_{\mathfrak{b}}\!\circledast_{\mathfrak{b}}\! \text{ }\!\!_{\mathfrak{b}}\mathcal{F}(b_{\mathfrak{b}})
      \ar[d]^-{\lambda^{\mathcal{F}}_{z_{\mathfrak{b}}}}
      &\mathcal{F}(z\!\circledast\! b_{\mathfrak{b}})
      \ar@/^0.5pc/[dl]|-{\mathcal{F}(\gamma_{z_{\mathfrak{b}}})}
      \ar[d]^-{\xi_{z\circledast b_{\mathfrak{b}}}}
      &z\!\circledast\! \widetilde{\mathcal{F}}(b_{\mathfrak{b}})
      \ar@/^0.5pc/[dl]|-{t^{\widetilde{\mathcal{F}}}_{z,b_{\mathfrak{b}}}}
      \ar@/_0.5pc/@{->>}[dr]|-{\mathit{cq}_{z_{\mathfrak{b}},\text{ }\!\!_{\mathfrak{b}}\widetilde{\mathcal{F}}(b_{\mathfrak{b}})}}
      &z_{\mathfrak{b}}\!\circledast_{\mathfrak{b}}\! \text{ }\!\!_{\mathfrak{b}}\mathcal{F}(b_{\mathfrak{b}})
      \ar[d]^-{\I_{z_{\mathfrak{b}}}\circledast_{\mathfrak{b}} \xi_{b_{\mathfrak{b}}}}
      \\
      &\mathcal{F}(z_{\mathfrak{b}})
      \ar[d]^-{\xi_{z_{\mathfrak{b}}}}
      &\widetilde{\mathcal{F}}(z\!\circledast\! b_{\mathfrak{b}})
      \ar[d]^-{\widetilde{\mathcal{F}}(\gamma_{z_{\mathfrak{b}}})}
      &\text{ }
      &z_{\mathfrak{b}}\!\circledast_{\mathfrak{b}}\! \text{ }\!\!_{\mathfrak{b}}\widetilde{\mathcal{F}}(b_{\mathfrak{b}})
      \ar[d]^-{\lambda^{\widetilde{\mathcal{F}}}_{z_{\mathfrak{b}}}}
      \\
      &\widetilde{\mathcal{F}}(z_{\mathfrak{b}})
      \ar@{=}[r]
      &\widetilde{\mathcal{F}}(z_{\mathfrak{b}})
      \ar@{=}[rr]
      &\text{ }
      &\widetilde{\mathcal{F}}(z_{\mathfrak{b}})
    }
  \end{equation*}
  After right-cancelling the epimorphism
  $\textcolor{teal}{\mathit{cq}_{z_{\mathfrak{b}},\text{ }\!\!_{\mathfrak{b}}\mathcal{F}(b_{\mathfrak{b}})}}$
  in the above diagram, we obtain the relation (\ref{eq EWthm lambdaF naturalinF}).
\end{proof}

Let $\text{ }\!\!_{\mathfrak{b}}X$ be a left $\mathfrak{b}$-module object in $\mathcal{D}$.
The functor
$\mathlcal{C}\text{-}\mathit{Funct}(\text{$\mathpzc{Mod}\!_{\mathfrak{b}}$},\mathcal{D})
\to \text{ }\!\!_{\mathfrak{b}}\mathcal{D}$
of evaluating at $b_{\mathfrak{b}}\in\Obj(\mathpzc{Mod}\!_{\mathfrak{b}})$
defined in (\ref{eq EWthm rightCadj}) sends the $\mathlcal{C}$-enriched functor
$-\circledast_{\mathfrak{b}}\text{ }\!\!_{\mathfrak{b}} X:\mathpzc{Mod}\!_{\mathfrak{b}}\rightarrow\mathcal{D}$
to the left $\mathfrak{b}$-module object 
$\text{ }\!\!_{\mathfrak{b}}b_{\mathfrak{b}}\circledast_{\mathfrak{b}} \text{ }\!\!_{\mathfrak{b}}X
=(
  b_{\mathfrak{b}}\circledast_{\mathfrak{b}} \text{ }\!\!_{\mathfrak{b}}X,
  \rho_{b_{\mathfrak{b}}\circledast_{\mathfrak{b}}\text{ }\!\!_{\mathfrak{b}}X}
)$
in $\mathcal{D}$, where
\begin{equation*}
  \rho_{b_{\mathfrak{b}}\!\circledast_{\mathfrak{b}} \text{ }\!\!_{\mathfrak{b}}X}:\!\!
  \xymatrix@C=40pt{
    b\!\circledast\! (b_{\mathfrak{b}}\!\circledast_{\mathfrak{b}}\! \text{ }\!\!_{\mathfrak{b}}X)
    \ar[r]^-{a_{b,b_{\mathfrak{b}},\text{ }\!\!_{\mathfrak{b}}X}}_-{\cong}
    &(b\!\circledast\! b_{\mathfrak{b}})\!\circledast_{\mathfrak{b}}\! \text{ }\!\!_{\mathfrak{b}}X
    \ar[r]^-{\gamma_{b_{\mathfrak{b}}}\circledast_{\mathfrak{b}}\I_{\text{ }\!\!_{\mathfrak{b}}X}}
    &b_{\mathfrak{b}}\!\circledast_{\mathfrak{b}}\! \text{ }\!\!_{\mathfrak{b}}X
    .
  }
\end{equation*}
One can check that the isomorphism
$\imath^{\mathfrak{b}}_{\text{ }\!\!_{\mathfrak{b}}X}:b_{\mathfrak{b}}\circledast_{\mathfrak{b}}\text{ }\!\!_{\mathfrak{b}}X \xrightarrow[]{\cong} X$
in $\mathcal{D}$ defined in (\ref{eq LeftModuleObj imathbbX})
becomes an isomorphism
$\imath^{\mathfrak{b}}_{\text{ }\!\!_{\mathfrak{b}}X}:
\text{ }\!\!_{\mathfrak{b}}b_{\mathfrak{b}}\!\circledast_{\mathfrak{b}}\! \text{ }\!\!_{\mathfrak{b}}X
\xrightarrow[]{\cong}\text{ }\!\!_{\mathfrak{b}}X$
in $\text{ }\!\!_{\mathfrak{b}}\mathcal{D}$.
We are ready to prove Theorem~\ref{thm Intro EWthm}.

\begin{proof}[of Theorem~\ref{thm Intro EWthm}]
From the equivalence of statements \textit{(i)}-\textit{(iv)} in Proposition~\ref{prop EWthm lambdaF},
we conclude that the map (\ref{eq LeftModuleObj Leftadj})
induces an equivalence of categories
\begin{equation*}
\xymatrix{
\text{ }\!\!_{\mathfrak{b}}\mathcal{D}
\ar[r]^-{\simeq}
&\mathlcal{C}\text{-}\mathit{Funct}_{\mathit{cocon}}(\text{$\mathpzc{Mod}\!_{\mathfrak{b}}$},\mathcal{D})
}
\end{equation*}
between $\text{ }\!\!_{\mathfrak{b}}\mathcal{D}$
and the category of cocontinuous $\mathlcal{C}$-enriched functors
$\mathpzc{Mod}\!_{\mathfrak{b}}\to\mathcal{D}$.
The latter is a full, coreflective subcategory
 of the category
$\mathlcal{C}\text{-}\mathit{Funct}(\text{$\mathpzc{Mod}\!_{\mathfrak{b}}$},\mathcal{D})$
of all $\mathlcal{C}$-enriched functors
$\mathpzc{Mod}\!_{\mathfrak{b}}\to\mathcal{D}$
thanks to Lemma~\ref{lem EWthm lambdaF naturalinF}
by taking $\widetilde{\mathcal{F}}$ to be a general $\mathlcal{C}$-enriched functor
and $\mathcal{F}$ a $\mathlcal{C}$-cocontinuous one.
This completes the proof of Theorem~\ref{thm Intro EWthm}.

One can also directly show that the functor
  $\text{$\text{ }\!\!_{\mathfrak{b}}\mathcal{D}$}
  \rightarrow \mathlcal{C}\text{-}\mathit{Funct}(\mathpzc{Mod}\!_{\mathfrak{b}},\mathcal{D})$
  in (\ref{eq LeftModuleObj Leftadj}) is left adjoint to the functor
  $\mathlcal{C}\text{-}\mathit{Funct}(\text{$\mathpzc{Mod}\!_{\mathfrak{b}}$},\mathcal{D})
  \rightarrow \text{ }\!\!_{\mathfrak{b}}\mathcal{D}$
  in (\ref{eq EWthm rightCadj}).
    The component of the unit at
    each object $\text{ }\!\!_{\mathfrak{b}}X$ in $\text{ }\!\!_{\mathfrak{b}}\mathcal{D}$
    is the isomorphism 
    $(\imath^{\mathfrak{b}}_{\text{ }\!\!_{\mathfrak{b}}X})^{-1}
    \!:\!\!\!
    \xymatrix@C=15pt{
      \text{ }\!\!_{\mathfrak{b}}X
      \ar[r]^-{\cong}
      &\text{ }\!\!_{\mathfrak{b}}b_{\mathfrak{b}}\!\circledast_{\mathfrak{b}}\! \text{ }\!\!_{\mathfrak{b}}X
    }$
    in $\text{ }\!\!_{\mathfrak{b}}\mathcal{D}$.
    The component of the counit at
    each $\mathlcal{C}$-enriched functor
    $\mathcal{F}:\mathpzc{Mod}\!_{\mathfrak{b}}\rightarrow\mathcal{D}$
    is the $\mathlcal{C}$-enriched natural transformation
    $\lambda^{\mathcal{F}}:
-\!\circledast_{\mathfrak{b}}\! \text{ }\!\!_{\mathfrak{b}}\mathcal{F}(b_{\mathfrak{b}})
\Rightarrow\mathcal{F}$
    defined in Lemma~\ref{lem EWthm lambdaF}.
    One can check that the isomorphism 
    $(\imath^{\mathfrak{b}}_{\text{ }\!\!_{\mathfrak{b}}X})^{-1}$
    is natural in variable $\text{ }\!\!_{\mathfrak{b}}X$,
    and by Lemma~\ref{lem EWthm lambdaF naturalinF}
    $\lambda^{\mathcal{F}}$ is natural in variable $\mathcal{F}$.
    We can check the triangular identities using the relation
    $\lambda^{\mathcal{F}}_{b_{\mathfrak{b}}}=\imath^{\mathfrak{b}}_{\text{ }\!\!_{\mathfrak{b}}\mathcal{F}(b_{\mathfrak{b}})}$
and the explicit description
of $(\imath^{\mathfrak{b}}_{\text{ }\!\!_{\mathfrak{b}}X})^{-1}$
given in (\ref{eq LeftModuleObj imathbbX}).
The rest of the statements in Theorem~\ref{thm Intro EWthm}
are straightforward to check using Proposition~\ref{prop EWthm lambdaF}.
This is another proof of Theorem~\ref{thm Intro EWthm}.
\end{proof}

\begin{proof}[of Corollary~\ref{cor Intro EWthm}]
  Let $\mathfrak{b}'$ be another monoid in $\mathlcal{C}$.
  After substituting $\mathcal{D}=\mathpzc{Mod}\!_{\mathfrak{b}'}$
  in Theorem~\ref{thm Intro EWthm},
  we obtain the adjoint equivalence of categories
  \begin{equation*}
    \xymatrix@R=0pt{
      \text{ }\!\!_{\mathfrak{b}}\mathpzc{Mod}\!_{\mathfrak{b}'}
      \ar@<0.5ex>[r]^-{\simeq}
      &{\mathlcal{C}\text{-}\mathit{Funct}_{\mathit{cocon}}(\mathpzc{Mod}\!_{\mathfrak{b}},\mathpzc{Mod}\!_{\mathfrak{b}'})}
      \ar@<0.5ex>[l]^-{\simeq}
    }
  \end{equation*}
  whose right adjoint is the functor of evaluating at $b_{\mathfrak{b}}$.
\end{proof}

\section{Morita Theory} \label{sec Morita}
In this section, we prove Theorem~\ref{thm Intro CharacterizingMb}
which characterizes when a $\mathlcal{C}$-enriched category $\mathcal{D}$
is equivalent to $\mathpzc{Mod}\!_{\mathfrak{b}}$ for a given monoid $\mathfrak{b}$ in $\mathlcal{C}$.
We also give a proof of Corollary~\ref{cor Intro CosMorita}
which generalizes the result of Morita in enriched context.

\begin{definition} \label{def Morita Enrichedcompactgen}
  Let $\mathcal{D}$ be a $\mathlcal{C}$-enriched category
  and let $X\in\Obj(\mathcal{D})$.
  We say
  \begin{itemize}
    \item[(i)]
    $X$ is a \emph{$\mathlcal{C}$-enriched compact object} in $\mathcal{D}$
    if the $\mathlcal{C}$-enriched Hom functor
    $\mathcal{D}(X,-):\mathcal{D}\rightarrow \mathcal{C}$
    preserves $\mathlcal{C}$-tensors, and
    the underlying functor
    $\mathcal{D}(X,-)_0$
    preserves coequalizers;
    
    \item[(ii)]
    $X$ is a \emph{$\mathlcal{C}$-enriched generator} in $\mathcal{D}$
    if the $\mathlcal{C}$-enriched Hom functor
    $\mathcal{D}(X,-):\mathcal{D}\rightarrow \mathcal{C}$
    is conservative;
  
    \item[(iii)]
    $X$ is a \emph{$\mathlcal{C}$-enriched compact generator} in $\mathcal{D}$
    if it is both a $\mathlcal{C}$-enriched compact object and a $\mathlcal{C}$-enriched generator in $\mathcal{D}$.
  \end{itemize}    
\end{definition}

\begin{example} \label{example}
  Consider the case when $\mathlcal{C}=\mathlcal{Ab}$
  is the closed symmetric monoidal category of abelian groups.
  Let $R$ be a ring and let $\mathpzc{Mod}\!_R$ be the preadditive category of right $R$-modules.
  For each right $R$-module $N_R$,
  \begin{itemize}
    \item[(i)]
    $N_R$ is an $\mathlcal{Ab}$-enriched compact object in $\mathpzc{Mod}\!_R$
    if and only if it is a finitely generated projective right $R$-module;
    
    \item[(ii)]
    $N_R$ is an $\mathlcal{Ab}$-enriched generator in $\mathpzc{Mod}\!_R$
    if and only if it is a generator in the category of right $R$-modules;
    
    \item[(iii)]
    $N_R$ is an $\mathlcal{Ab}$-enriched compact generator in $\mathpzc{Mod}\!_R$
    if and only if it is a finitely generated projective generator
    in the category of right $R$-modules.
  \end{itemize}
Let us explain the `only if' part of statement (i).
Assume that $N_R$ is an $\mathlcal{Ab}$-enriched compact object in $\mathpzc{Mod}\!_{R}$.
By Proposition~\ref{prop EWthm lambdaF},
the $\mathlcal{Ab}$-enriched Hom functor 
$\mathpzc{Mod}\!_{R}(N_R,-)$ is $\mathlcal{Ab}$-enriched cocontinuous.
In particular, the underlying functor
$\mathpzc{Mod}\!_{R}(N_R,-)_0$
is cocontinuous.
\begin{itemize}
\item
$N_R$ is a projective right $R$-module if and only if the underlying functor
$\mathpzc{Mod}\!_{R}(N_R,-)_0$ preserves coequalizers.

\item
A projective right $R$-module $N_R$
is finitely generated if and only if the underlying functor
$\mathpzc{Mod}\!_{R}(N_R,-)_0$ preserves arbitrary sums.
This is explained in the proof of \cite[Proposition1.2(c)]{Bass1968}.
\end{itemize}
Therefore $N_R$ is a finitely generated projective right $R$-module.
\end{example}

\begin{lemma}\label{lem Morita bbcompactgen}
  Let $\mathfrak{b}=(b,u_b,m_b)$ be a monoid in $\mathlcal{C}$.
  The right $\mathfrak{b}$-module
  $b_{\mathfrak{b}}$ is a $\mathlcal{C}$-enriched compact generator in $\mathpzc{Mod}\!_{\mathfrak{b}}$,
  and 
  we have an isomorphism of monoids
  $\text{$\mathfrak{b}$}\cong\mathit{End}_{\mathpzc{Mod}\!_{\mathfrak{b}}}(b_{\mathfrak{b}})$
  in $\mathlcal{C}$.    
\end{lemma}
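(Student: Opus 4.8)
The plan is to identify the $\mathlcal{C}$-enriched Hom functor $\mathpzc{Mod}\!_{\mathfrak{b}}(b_{\mathfrak{b}},-):\mathpzc{Mod}\!_{\mathfrak{b}}\to\mathcal{C}$ with the forgetful $\mathlcal{C}$-enriched functor $\mathcal{U}:\mathpzc{Mod}\!_{\mathfrak{b}}\to\mathcal{C}$, whose relevant properties are already recorded in \textsection\ref{subsec TenEnCat}; all three assertions then follow. First I would use that the right $\mathfrak{b}$-module $b_{\mathfrak{b}}$ carries the left $\mathfrak{b}$-module object structure $\text{ }\!\!_{\mathfrak{b}}b_{\mathfrak{b}}$ inside $\mathpzc{Mod}\!_{\mathfrak{b}}$, so the factorization (\ref{eq LeftModuleObj T(bX,-)}) gives $\mathpzc{Mod}\!_{\mathfrak{b}}(b_{\mathfrak{b}},-)=\mathcal{U}\circ\mathpzc{Mod}\!_{\mathfrak{b}}(\text{ }\!\!_{\mathfrak{b}}b_{\mathfrak{b}},-)$, and by Proposition~\ref{prop LeftModuleObj bX Cadj} the middle functor $\mathpzc{Mod}\!_{\mathfrak{b}}(\text{ }\!\!_{\mathfrak{b}}b_{\mathfrak{b}},-):\mathpzc{Mod}\!_{\mathfrak{b}}\to\mathpzc{Mod}\!_{\mathfrak{b}}$ is the right $\mathlcal{C}$-enriched adjoint of $-\circledast_{\mathfrak{b}}\text{ }\!\!_{\mathfrak{b}}b_{\mathfrak{b}}$. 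Applying Proposition~\ref{prop EWthm lambdaF} to $\mathcal{F}=I_{\mathpzc{Mod}\!_{\mathfrak{b}}}$, which preserves $\mathlcal{C}$-tensors and coequalizers, identifies $\text{ }\!\!_{\mathfrak{b}}\mathcal{F}(b_{\mathfrak{b}})$ with $\text{ }\!\!_{\mathfrak{b}}b_{\mathfrak{b}}$ and shows $\lambda^{\mathcal{F}}:-\circledast_{\mathfrak{b}}\text{ }\!\!_{\mathfrak{b}}b_{\mathfrak{b}}\Rightarrow I_{\mathpzc{Mod}\!_{\mathfrak{b}}}$ is invertible. Since right $\mathlcal{C}$-enriched adjoints are unique up to $\mathlcal{C}$-enriched natural isomorphism, this forces $\mathpzc{Mod}\!_{\mathfrak{b}}(\text{ }\!\!_{\mathfrak{b}}b_{\mathfrak{b}},-)\cong I_{\mathpzc{Mod}\!_{\mathfrak{b}}}$, and hence $\mathpzc{Mod}\!_{\mathfrak{b}}(b_{\mathfrak{b}},-)\cong\mathcal{U}$ as $\mathlcal{C}$-enriched functors.

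Granting this identification, compactness of $b_{\mathfrak{b}}$ is immediate: $\mathcal{U}$ preserves $\mathlcal{C}$-tensors (its tensorial strengths are identities, \textsection\ref{subsec TenEnCat}), and $\mathcal{U}_0$ creates, hence preserves, coequalizers---the forgetful functor from right $\mathfrak{b}$-modules to $\mathcal{C}$ is monadic, with monad $-\circledast b$ which is a left adjoint since $\mathlcal{C}$ is closed, so it preserves all colimits. Both preservation properties transfer along the $\mathlcal{C}$-enriched natural isomorphism $\mathpzc{Mod}\!_{\mathfrak{b}}(b_{\mathfrak{b}},-)\cong\mathcal{U}$, the case of $\mathlcal{C}$-tensors being the compatibility (\ref{eq TenEnCat strength nat}) of tensorial strengths with $\mathlcal{C}$-enriched natural transformations; thus $\mathpzc{Mod}\!_{\mathfrak{b}}(b_{\mathfrak{b}},-)$ preserves $\mathlcal{C}$-tensors and $\mathpzc{Mod}\!_{\mathfrak{b}}(b_{\mathfrak{b}},-)_0$ preserves coequalizers. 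Moreover $\mathcal{U}$ is conservative: if the underlying $\mathcal{C}$-morphism of a morphism of right $\mathfrak{b}$-modules is invertible, its inverse is automatically compatible with the actions. Hence $\mathpzc{Mod}\!_{\mathfrak{b}}(b_{\mathfrak{b}},-)$ is conservative as well, and $b_{\mathfrak{b}}$ is a $\mathlcal{C}$-enriched compact generator.

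It remains to produce the monoid isomorphism. The left $\mathfrak{b}$-module object $\text{ }\!\!_{\mathfrak{b}}b_{\mathfrak{b}}=(b_{\mathfrak{b}},\gamma_{b_{\mathfrak{b}}})$ in $\mathpzc{Mod}\!_{\mathfrak{b}}$ corresponds, as recalled in \textsection\ref{subsec LeftmoduleObj}, to a morphism of monoids $f:=\bar{\gamma}_{b_{\mathfrak{b}}}:\mathfrak{b}\to\mathit{End}_{\mathpzc{Mod}\!_{\mathfrak{b}}}(b_{\mathfrak{b}})$ in $\mathlcal{C}$, and I claim $f$ is invertible. Its inverse is obtained by restricting a module endomorphism along $u_b$, i.e. it is $\mathcal{U}_{b_{\mathfrak{b}},b_{\mathfrak{b}}}:\mathit{End}_{\mathpzc{Mod}\!_{\mathfrak{b}}}(b_{\mathfrak{b}})\hookrightarrow[b,b]$ followed by $[u_b,b]:[b,b]\to[c,b]$ and the canonical isomorphism $[c,b]\cong b$. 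One composite reduces to the right-unit coherence $m_b\circ(\I_b\circledast u_b)=\jmath_b$ of $\mathfrak{b}$, and the other reduces, via the defining equalizer (\ref{eq EnCat Mbdef}) and the unit coherences, to the fact that a morphism of right $\mathfrak{b}$-modules out of $b_{\mathfrak{b}}$ is determined by its value at $u_b$; alternatively, one recognizes $f$ as the underlying $\mathcal{C}$-morphism of the component at $b_{\mathfrak{b}}$ of the isomorphism $\mathpzc{Mod}\!_{\mathfrak{b}}(\text{ }\!\!_{\mathfrak{b}}b_{\mathfrak{b}},-)\cong I_{\mathpzc{Mod}\!_{\mathfrak{b}}}$ produced above, which is invertible by construction. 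I expect this final check to be the only genuine computation---it is the enriched incarnation of the classical isomorphism $\mathit{End}_R(R)\cong R$ and is a short diagram chase---while everything preceding it is formal manipulation of $\mathlcal{C}$-enriched adjunctions; the sole non-formal external input used elsewhere is the standard colimit-creation property of the forgetful functor $\mathcal{U}_0$.
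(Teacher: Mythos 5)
Your proposal is correct, and it rests on the same pivot as the paper's proof: identifying the enriched Hom functor $\mathpzc{Mod}\!_{\mathfrak{b}}(b_{\mathfrak{b}},-)$ with the forgetful functor $\mathcal{U}$, whose conservativity, preservation of $\mathlcal{C}$-tensors, and preservation of coequalizers then deliver compact generation, with the monoid isomorphism given by the right adjunct $\bar{\gamma}_{b_{\mathfrak{b}}}:\mathfrak{b}\to\mathit{End}_{\mathpzc{Mod}\!_{\mathfrak{b}}}(b_{\mathfrak{b}})$. The difference is in how you reach the identification: the paper verifies directly (leaving the computation to the reader) that each right adjunct $\bar{\gamma}_{z_{\mathfrak{b}}}:z\to\mathpzc{Mod}\!_{\mathfrak{b}}(b_{\mathfrak{b}},z_{\mathfrak{b}})$ is an invertible $\mathlcal{C}$-enriched natural transformation, whereas you derive it formally by applying Proposition~\ref{prop EWthm lambdaF} to $\mathcal{F}=I_{\mathpzc{Mod}\!_{\mathfrak{b}}}$ to invert $\lambda^{I}:-\circledast_{\mathfrak{b}}\!\text{ }\!\!_{\mathfrak{b}}b_{\mathfrak{b}}\Rightarrow I$ and then invoking uniqueness of enriched right adjoints via Proposition~\ref{prop LeftModuleObj bX Cadj} and the factorization (\ref{eq LeftModuleObj T(bX,-)}). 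There is no circularity in this (those results are established independently of the lemma), and your route has the merit of replacing the ``one can check'' computation by citable machinery; indeed the mate of $\lambda^{I}$ has underlying components exactly $\bar{\gamma}_{z_{\mathfrak{b}}}$, so your ``alternative'' argument for the invertibility of $f$ coincides with the paper's. You also supply details the paper takes for granted (monadicity of $\mathcal{U}_0$ over the colimit-preserving monad $-\otimes b$, the explicit inverse of $f$ by restriction along $u_b$), which is welcome; just make sure the final unit-coherence diagram chase for $f^{-1}\circ f=\I_b$ and $f\circ f^{-1}=\I$ is actually written out, since that is the one genuinely computational step in either version.
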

\begin{proof}
  Recall that for each $z_{\mathfrak{b}}\in\Obj(\mathpzc{Mod}\!_{\mathfrak{b}})$,
  we have a morphism 
  $\gamma_{z_{\mathfrak{b}}}:z\circledast b_{\mathfrak{b}}\rightarrow z_{\mathfrak{b}}$
  in $\mathpzc{Mod}\!_{\mathfrak{b}}$.
  One can check that the corresponding right adjunct
    $\text{$\bar{\gamma}_{z_{\mathfrak{b}}}$}:\text{ }z\xrightarrow[]{\cong} \mathpzc{Mod}\!_{\mathfrak{b}}(b_{\mathfrak{b}},z_{\mathfrak{b}})$
  is an isomorphism in $\mathcal{C}$,
  and is $\mathlcal{C}$-enriched natural in variable $z_{\mathfrak{b}}$.
  Thus we have an isomorphism of $\mathlcal{C}$-enriched functors
  $\mathcal{U}\cong \mathpzc{Mod}\!_{\mathfrak{b}}(b_{\mathfrak{b}},-):\mathpzc{Mod}\!_{\mathfrak{b}}\rightarrow\mathcal{C}$.
  The forgetful $\mathlcal{C}$-enriched functor
  $\mathcal{U}:\mathpzc{Mod}\!_{\mathfrak{b}}\rightarrow\mathcal{C}$ is conservative,
  preserves $\mathlcal{C}$-tensors,
  and its underlying functor
  $\mathcal{U}_0$ preserves coequalizers.
  We conclude that $b_{\mathfrak{b}}$ is a $\mathlcal{C}$-enriched compact generator in $\mathpzc{Mod}\!_{\mathfrak{b}}$.
  We leave for the readers to check that the isomorphism
  $\text{$\bar{\gamma}_{b_{\mathfrak{b}}}$}:b\xrightarrow[]{\cong} \mathpzc{Mod}\!_{\mathfrak{b}}(b_{\mathfrak{b}},b_{\mathfrak{b}})$
  in $\mathcal{C}$
  becomes an isomorphism of monoids
  $\text{$\bar{\gamma}_{b_{\mathfrak{b}}}:\mathfrak{b}$}\cong\mathit{End}_{\mathpzc{Mod}\!_{\mathfrak{b}}}(b_{\mathfrak{b}})$
  in $\mathlcal{C}$.
\end{proof}

We are ready to prove Theorem~\ref{thm Intro CharacterizingMb}.

\begin{proof}[of Theorem~\ref{thm Intro CharacterizingMb}]
  By Lemma~\ref{lem Morita bbcompactgen}, the only if part is true.
  We prove the if part as follows.
  Let us denote
  $f:b\xrightarrow[]{\cong}\mathcal{D}(X,X)$
  as the isomorphism in $\mathcal{C}$.
  Then we have a morphism
    $\rho_X:
    \!\!
    \xymatrix@C=25pt{
      b\!\circledast\! X
      \ar[r]^-{f\circledast\I_X}_-{\cong}
      &\mathcal{D}(X,X)\!\circledast\! X
      \ar[r]^-{\mathit{Ev}_{X,X}}
      &X
    }$
  in $\mathcal{D}$
  whose right adjunct is $\bar{\rho}_X=f:b\xrightarrow[]{\cong}\mathcal{D}(X,X)$,
  and the pair $\text{ }\!\!_{\mathfrak{b}}X=(X,\rho_X)$
  is a left $\mathfrak{b}$-module object in $\mathcal{D}$.
  By Proposition~\ref{prop LeftModuleObj bX Cadj}, we have the following
  adjoint pair of $\mathlcal{C}$-enriched functors.
  \begin{equation} \label{eq Intro CharacterizingMb proof}
    \xymatrix@C=60pt{
      \mathpzc{Mod}\!_{\mathfrak{b}}
      \ar@<1ex>[r]^-{\alpha\text{ }\!:=\text{ }\!-\text{ }\!\circledast_{\mathfrak{b}}\text{ }\!\!_{\mathfrak{b}}X}
      &\mathcal{D}
      \ar@<1ex>[l]^-{\beta\text{ }\!:=\text{ }\!\mathcal{D}(\text{ }\!\!_{\mathfrak{b}}X,-)}
    }
  \end{equation}
  We are going to show that the $\mathlcal{C}$-enriched adjunction (\ref{eq Intro CharacterizingMb proof})
  is an adjoint equivalence of $\mathlcal{C}$-enriched categories.
  First, we show that
  $\beta\alpha:\mathpzc{Mod}\!_{\mathfrak{b}}\rightarrow\mathpzc{Mod}\!_{\mathfrak{b}}$
  is $\mathlcal{C}$-enriched cocontinuous as follows.
  Recall the diagram in (\ref{eq LeftModuleObj T(bX,-)}).
  \begin{itemize}
    \item 
    The $\mathlcal{C}$-enriched functor
    $\mathcal{D}(X,-):\mathcal{D}\rightarrow\mathcal{C}$
    preserves $\mathlcal{C}$-tensors,
    and the underlying functor $\mathcal{D}(X,-)_0$ preserves coequalizers.
  
    \item
    The $\mathlcal{C}$-enriched category $\mathpzc{Mod}\!_{\mathfrak{b}}$ is tensored,
    and the underlying category $(\mathpzc{Mod}\!_{\mathfrak{b}})_0$ has coequalizers.
  
    \item
    The forgetful $\mathlcal{C}$-enriched functor
    $\mathcal{U}:\mathpzc{Mod}\!_{\mathfrak{b}}\rightarrow\mathcal{C}$ 
    is conservative,
    preserves $\mathlcal{C}$-tensors,
    and the underlying functor $\mathcal{U}_0$ preserves coequalizers.  
  \end{itemize}
  Thus we obtain that the $\mathlcal{C}$-enriched functor
  $\beta=\mathcal{D}(\text{ }\!\!_{\mathfrak{b}}X,-):\mathcal{D}\rightarrow\mathpzc{Mod}\!_{\mathfrak{b}}$
  preserves $\mathlcal{C}$-tensors,
  and the underlying functor $\beta_0$
  preserves coequalizers.
  Then the $\mathlcal{C}$-enriched functor
  $\beta\alpha:\mathpzc{Mod}\!_{\mathfrak{b}}\rightarrow\mathpzc{Mod}\!_{\mathfrak{b}}$
  also has the same properties.
  By Proposition~\ref{prop EWthm lambdaF},
  we conclude that the $\mathlcal{C}$-enriched functor
  $\beta\alpha:\mathpzc{Mod}\!_{\mathfrak{b}}\rightarrow\mathpzc{Mod}\!_{\mathfrak{b}}$
  is cocontinuous.
  
  Next, we show that the adjunction (\ref{eq Intro CharacterizingMb proof})
  is an adjoint equivalence of $\mathlcal{C}$-enriched categories.
  We begin by showing that the unit
  $\eta:I_{\mathpzc{Mod}\!_{\mathfrak{b}}}\Rightarrow \beta\alpha:\mathpzc{Mod}\!_{\mathfrak{b}}\rightarrow\mathpzc{Mod}\!_{\mathfrak{b}}$
  is a $\mathlcal{C}$-enriched natural isomorphism.
  By Corollary~\ref{cor Intro EWthm},
  it suffices to show that the component
  $\eta_{b_{\mathfrak{b}}}:b_{\mathfrak{b}}\rightarrow \mathcal{D}(\text{ }\!\!_{\mathfrak{b}}X,b_{\mathfrak{b}}\circledast_{\mathfrak{b}} \text{ }\!\!_{\mathfrak{b}}X)$
  at $b_{\mathfrak{b}}$ is an isomorphism in $\mathpzc{Mod}\!_{\mathfrak{b}}$.
  Consider the following diagram.
  \begin{equation*}
    \hspace*{-0.7cm}
    \xymatrix@R=15pt@C=25pt{
      &b
      \ar@{=}[r]
      \ar[dd]^-{\eta_{b_{\mathfrak{b}}}}
      &b
      \ar[d]^-{\mathit{Cv}_{b,X}}
      \ar@{=}[rr]
      &\text{ }
      &b
      \ar[d]^-{f}_{\cong}
      \\
      &\text{ }
      &\mathcal{D}(X,b\!\circledast\! X)
      \ar@/^0.5pc/[dl]|-{(\mathit{cq}_{b_{\mathfrak{b}},\text{ }\!\!_{\mathfrak{b}}X})_{\star}}
      \ar[dd]^-{(\rho_X)_{\star}}
      \ar@/^0.5pc/[dr]^-{(f\circledast\I_X)_{\star}}_{\cong}
      &\text{ }
      &\mathcal{D}(X,X)
      \ar@/_0.5pc/[dl]_-{\mathit{Cv}_{\mathcal{D}(X,X),X}}
      \ar@{=}[dd]
      \\
      &\mathcal{D}(X,b_{\mathfrak{b}}\!\circledast_{\mathfrak{b}}\! \text{ }\!\!_{\mathfrak{b}}X)
      \ar[d]^-{(\imath^{\mathfrak{b}}_{\text{ }\!\!_{\mathfrak{b}}X})_{\star}}_-{\cong}
      &\text{ }
      &\mathcal{D}\bigl(X,\mathcal{D}(X,X)\!\circledast\! X\bigr)
      \ar[d]^-{(\mathit{Ev}_{X,X})_{\star}}
      &\text{ }
      \\
      &\mathcal{D}(X,X)
      \ar@{=}[r]
      &\mathcal{D}(X,X)
      \ar@{=}[r]
      &\mathcal{D}(X,X)
      \ar@{=}[r]
      &\mathcal{D}(X,X)
    }
  \end{equation*} 
  We obtain that the morphism
  $\eta_{b_{\mathfrak{b}}}:b\rightarrow \mathcal{D}(X, b_{\mathfrak{b}}\circledast_{\mathfrak{b}} \text{ }\!\!_{\mathfrak{b}}X)$ 
  in $\mathcal{C}$ is equal to
  $(\imath^{\mathfrak{b}}_{\text{ }\!\!_{\mathfrak{b}}X})^{-1}_{\star}\circ f
  :b\xrightarrow[]{\cong}\mathcal{D}(X,X)\xrightarrow[]{\cong}\mathcal{D}(X,b_{\mathfrak{b}}\circledast_{\mathfrak{b}} \text{ }\!\!_{\mathfrak{b}}X)$
  which is an isomorphism.
  This shows that the unit
  $\eta:I_{\mathpzc{Mod}\!_{\mathfrak{b}}}\Rightarrow \beta\alpha$
  is a $\mathlcal{C}$-enriched natural isomorphism.
  
  To conclude that the $\mathlcal{C}$-enriched adjunction
  (\ref{eq Intro CharacterizingMb proof}) is an equivalence of $\mathlcal{C}$-enriched categories,
  it suffices to show that the right adjoint
  $\beta=\mathcal{D}(\text{ }\!\!_{\mathfrak{b}}X,-):\mathcal{D}\to \mathpzc{Mod}\!_{\mathfrak{b}}$
  is conservative.
  This is because any $\mathlcal{C}$-enriched adjunction
  with fully faithful left adjoint and conservative right adjoint
  is an adjoint equivalence of $\mathlcal{C}$-enriched categories
  due to the triangular identities.
  As we assumed that $X$ is also a $\mathlcal{C}$-enriched generator in $\mathcal{D}$,
  the $\mathlcal{C}$-enriched functor $\mathcal{D}(X,-):\mathcal{D}\rightarrow\mathcal{C}$
  is conservative.
  From the relation (\ref{eq LeftModuleObj T(bX,-)}),
  we obtain that
  $\beta=\mathcal{D}(\text{ }\!\!_{\mathfrak{b}}X,-):\mathcal{D}\to \mathpzc{Mod}\!_{\mathfrak{b}}$
 is also conservative.
  This completes the proof of Theorem~\ref{thm Intro CharacterizingMb}.
\end{proof}

\begin{remark}
  Let us weaken the assumption of Theorem~\ref{thm Intro CharacterizingMb}
  and merely assume that $X$ is a $\mathlcal{C}$-enriched compact object in $\mathcal{D}$.
  Then the left adjoint $\mathlcal{C}$-enriched functor
  $\alpha:\mathpzc{Mod}\!_{\mathfrak{b}}\rightarrow\mathcal{D}$
  in (\ref{eq Intro CharacterizingMb proof})
  induces an equivalence of $\mathlcal{C}$-enriched categories
  from $\mathpzc{Mod}\!_{\mathfrak{b}}$
  to a coreflective full $\mathlcal{C}$-enriched subcategory of $\mathcal{D}$.
\end{remark}

\begin{remark}
 Theorem 1.3 is related to the result in \cite{Berger2019}
which states that the Eilenberg-Moore category of a $\mathlcal{C}$-enriched $\mathlcal{C}$-tensor preserving monad $\mathcal{T}$ on $\mathcal{C}$ is equivalent
 to the category of right $\mathcal{T}(c)$-modules.
\end{remark}

Let $\mathfrak{b}=(b,u_b,m_b)$ be a monoid in $\mathlcal{C}$.
We have a $\mathlcal{C}$-enriched natural isomorphism
\begin{equation}\label{eq Morita jmathb}
  \text{$\jmath^{\mathfrak{b}}:$}
  \xymatrix@C=18pt{
    -\!\circledast_{\mathfrak{b}}\! \text{ }\!\!_{\mathfrak{b}}b_{\mathfrak{b}}    
    \ar@2{->}[r]^-{\cong}
    &I_{\mathpzc{Mod}\!_{\mathfrak{b}}}:
    \mathpzc{Mod}\!_{\mathfrak{b}}\rightarrow\mathpzc{Mod}\!_{\mathfrak{b}}
  }    
\end{equation}
whose component at $z_{\mathfrak{b}}=(z,\gamma_z)\in\Obj(\mathpzc{Mod}\!_{\mathfrak{b}})$
is the unique isomorphism
$\jmath^{\mathfrak{b}}_{z_{\mathfrak{b}}}:z_{\mathfrak{b}}\circledast_{\mathfrak{b}}\text{ }\!\!_{\mathfrak{b}}b_{\mathfrak{b}}\xrightarrow[]{\cong}z_{\mathfrak{b}}$
in $\mathcal{D}$ satisfying the relation
\begin{equation*}
  \vcenter{\hbox{
    \xymatrix@R=15pt@C=50pt{
      z\!\circledast\! b_{\mathfrak{b}}
      \ar@{->>}[d]_-{\mathit{cq}_{z_{\mathfrak{b}},\text{ }\!\!_{\mathfrak{b}}b_{\mathfrak{b}}}}
      \ar@/^1pc/[dr]^-{\gamma_{z_{\mathfrak{b}}}}
      \\
      z_{\mathfrak{b}}\!\circledast_{\mathfrak{b}}\! \text{ }\!\!_{\mathfrak{b}}b_{\mathfrak{b}}
      \ar@{.>}[r]^(0.54){\exists!\text{ }\jmath^{\mathfrak{b}}_{z_{\mathfrak{b}}}}_-{\cong}
      &z_{\mathfrak{b}}
    }
  }}
  \qquad\quad
  \gamma_{z_{\mathfrak{b}}}=
  \jmath^{\mathfrak{b}}_{z_{\mathfrak{b}}}
  \circ
  \mathit{cq}_{z_{\mathfrak{b}},\text{ }\!\!_{\mathfrak{b}}b_{\mathfrak{b}}}
  .
\end{equation*}
Let $\mathfrak{b}'$, $\mathfrak{b}''$ be additional monoids in $\mathlcal{C}$.
For each pair of a $(\mathfrak{b},\mathfrak{b}')$-bimodule
$\text{ }\!\!_{\mathfrak{b}}x_{\mathfrak{b}'}=(x_{\mathfrak{b}'},\rho_{x_{\mathfrak{b}'}})$
and a $(\mathfrak{b}',\mathfrak{b}'')$-bimodule
$\text{ }\!\!_{\mathfrak{b}'}y_{\mathfrak{b}''}$,
we have the $(\mathfrak{b},\mathfrak{b}'')$-bimodule
\begin{equation*}
  \text{ }\!\!_{\mathfrak{b}}x_{\mathfrak{b}'}\!\circledast_{\mathfrak{b}'}\! \text{ }\!\!_{\mathfrak{b}'}y_{\mathfrak{b}''}
  =\bigl(
    x_{\mathfrak{b}'}\!\circledast_{\mathfrak{b}'}\! \text{ }\!\!_{\mathfrak{b}'}y_{\mathfrak{b}''}
    ,\text{ }\text{ }
    \rho_{x_{\mathfrak{b}'}\circledast_{\mathfrak{b}'} \text{ }\!\!_{\mathfrak{b}'}y_{\mathfrak{b}''}}\!:\!\!
    \xymatrix@C=17pt{
      b\!\circledast\! (x_{\mathfrak{b}'}\!\circledast_{\mathfrak{b}'}\! \text{ }\!\!_{\mathfrak{b}'}y_{\mathfrak{b}''})
      \ar[r]
      &x_{\mathfrak{b}'}\!\circledast_{\mathfrak{b}'}\! \text{ }\!\!_{\mathfrak{b}'}y_{\mathfrak{b}''}
    }
    \!
  \bigr)
\end{equation*}
whose left $\mathfrak{b}$-action is given by
\begin{equation*}
  \rho_{x_{\mathfrak{b}'}\circledast_{\mathfrak{b}'} \text{ }\!\!_{\mathfrak{b}'}y_{\mathfrak{b}''}}\!:\!\!
  \xymatrix@C=42pt{
    b\!\circledast\! (x_{\mathfrak{b}'}\!\circledast_{\mathfrak{b}'}\! \text{ }\!\!_{\mathfrak{b}'}y_{\mathfrak{b}''})
    \ar[r]^-{a_{\text{ }\!b\text{ }\!,\text{ }\!x_{\mathfrak{b}'},\text{ }\!\!_{\mathfrak{b}'}y_{\mathfrak{b}''}}}_-{\cong}
    &(b\!\circledast\! x_{\mathfrak{b}'})\!\circledast_{\mathfrak{b}'}\! \text{ }\!\!_{\mathfrak{b}'}y_{\mathfrak{b}''}
    \ar[r]^-{\rho_{x_{\mathfrak{b}'}}\!\circledast_{\mathfrak{b}'} \I_{\text{ }\!\!_{\mathfrak{b}'}y_{\mathfrak{b}''}}}
    &x_{\mathfrak{b}'}\!\circledast_{\mathfrak{b}'}\! \text{ }\!\!_{\mathfrak{b}'}y_{\mathfrak{b}''}
    .
  }
\end{equation*}
We have a $\mathlcal{C}$-enriched natural isomorphism 
\begin{equation} \label{eq Morita associatorisom}
 \text{$ a_{-,\text{ }\!\!_{\mathfrak{b}}x_{\mathfrak{b}}\text{ }\!\!\!\!_{\text{ }^{\pr}},\text{ }\!\!_{\mathfrak{b}}\text{ }\!\!\!\!_{\text{ }^{\pr}}y_{\mathfrak{b}}\text{ }\!\!\!\!_{\text{ }^{\pr\!\pr}}}:$}
  \xymatrix@C=20pt{
    -\!\circledast_{\mathfrak{b}}\! (\text{ }\!\!_{\mathfrak{b}}x_{\mathfrak{b}'}\!\circledast_{\mathfrak{b}'}\! \text{ }\!\!_{\mathfrak{b}'}y_{\mathfrak{b}''})
    \ar@2{->}[r]^-{\cong}
    &(-\!\circledast_{\mathfrak{b}}\! \text{ }\!\!_{\mathfrak{b}}x_{\mathfrak{b}'})\!\circledast_{\mathfrak{b}'}\! \text{ }\!\!_{\mathfrak{b}'}y_{\mathfrak{b}''}
  }
  :\mathpzc{Mod}\!_{\mathfrak{b}}\rightarrow\mathpzc{Mod}\!_{\mathfrak{b}''}
\end{equation}
whose component
$a_{z_{\mathfrak{b}},\text{ }\!\!_{\mathfrak{b}}x_{\mathfrak{b}}\text{ }\!\!\!\!_{\text{ }^{\pr}},\text{ }\!\!_{\mathfrak{b}}\text{ }\!\!\!\!_{\text{ }^{\pr}}y_{\mathfrak{b}}\text{ }\!\!\!\!_{\text{ }^{\pr\!\pr}}}$
at $z_{\mathfrak{b}}\in\Obj(\mathpzc{Mod}\!_{\mathfrak{b}})$
is the unique morphism in $\mathpzc{Mod}\!_{\mathfrak{b}''}$
which makes the following diagram commutative.
\begin{equation*}
  \xymatrix@R=18pt@C=70pt{
    z\!\circledast\! (x_{\mathfrak{b}'}\!\circledast_{\mathfrak{b}'}\! \text{ }\!\!_{\mathfrak{b}'}y_{\mathfrak{b}''})
    \ar@{->>}[d]_-{\mathit{cq}_{z_{\mathfrak{b}},\text{ }\!\!_{\mathfrak{b}}x_{\mathfrak{b}}\text{ }\!\!\!\!\!_{\text{ }^{\pr}}\circledast_{\mathfrak{b}}\text{ }\!\!\!\!\!_{\text{ }^{\pr}}\text{ }\!\!_{\mathfrak{b}}\text{ }\!\!\!\!\!_{\text{ }^{\pr}}y_{\mathfrak{b}}\text{ }\!\!\!\!\!_{\text{ }^{\pr\!\pr}}}}
    \ar[r]^-{a_{z,x_{\mathfrak{b}'},\text{ }\!\!_{\mathfrak{b}'}y_{\mathfrak{b}''}}}_-{\cong}
    &(z\!\circledast\! x_{\mathfrak{b}'})\!\circledast_{\mathfrak{b}'}\! \text{ }\!\!_{\mathfrak{b}'}y_{\mathfrak{b}''}
    \ar@{->>}[d]^-{\mathit{cq}_{z_{\mathfrak{b}},\text{ }\!\!_{\mathfrak{b}}x_{\mathfrak{b}}\text{ }\!\!\!\!\!_{\text{ }^{\pr}}}\circledast_{\mathfrak{b}'}\I_{\text{ }\!\!_{\mathfrak{b}'}y_{\mathfrak{b}''}}}
    \\
    z_{\mathfrak{b}}\!\circledast_{\mathfrak{b}}\! (\text{ }\!\!_{\mathfrak{b}}x_{\mathfrak{b}'}\!\circledast_{\mathfrak{b}'}\! \text{ }\!\!_{\mathfrak{b}'}y_{\mathfrak{b}''})
    \ar@{.>}[r]^-{\exists!\text{ }a_{z_{\mathfrak{b}},\text{ }\!\!_{\mathfrak{b}}x_{\mathfrak{b}}\text{ }\!\!\!\!\!_{\text{ }^{\pr}},\text{ }\!\!_{\mathfrak{b}}\text{ }\!\!\!\!\!_{\text{ }^{\pr}}y_{\mathfrak{b}}\text{ }\!\!\!\!\!_{\text{ }^{\pr\!\pr}}}}_-{\cong}
    &(z_{\mathfrak{b}}\!\circledast_{\mathfrak{b}}\! \text{ }\!\!_{\mathfrak{b}}x_{\mathfrak{b}'})\!\circledast_{\mathfrak{b}'}\! \text{ }\!\!_{\mathfrak{b}'}y_{\mathfrak{b}''}
  }
\end{equation*}
We are ready to prove Corollary~\ref{cor Intro CosMorita}.

\begin{proof}[of Corollary~\ref{cor Intro CosMorita}]
  By substituting $\mathcal{D}=\mathpzc{Mod}\!_{\mathfrak{b}'}$ in Theorem~\ref{thm Intro CharacterizingMb},
  we immediately obtain that statements (i), (ii) are equivalent.
  We are left to show that statements (i), (iii) are equivalent.  
  The monoids $\mathfrak{b}$, $\mathfrak{b}'$ in $\mathlcal{C}$
  are Morita equivalent if and only if
  there exist a pair of cocontinuous $\mathlcal{C}$-enriched functors
  $\alpha:\mathpzc{Mod}\!_{\mathfrak{b}}\rightarrow \mathpzc{Mod}\!_{\mathfrak{b}'}$,
  $\beta:\mathpzc{Mod}\!_{\mathfrak{b}'}\rightarrow \mathpzc{Mod}\!_{\mathfrak{b}}$
  together with a pair of $\mathlcal{C}$-enriched natural isomorphisms
  $\beta\alpha\cong I_{\mathpzc{Mod}\!_{\mathfrak{b}}}$,
  $\alpha\beta\cong I_{\mathpzc{Mod}\!_{\mathfrak{b}'}}$.
  By Corollary~\ref{cor Intro EWthm}
  and using the $\mathlcal{C}$-enriched natural isomorphisms
  (\ref{eq Morita jmathb}), (\ref{eq Morita associatorisom}),
  we obtain that the existence of such pair $\alpha$, $\beta$
  is equivalent to the existence of 
  bimodules $\text{ }\!\!_{\mathfrak{b}}x_{\mathfrak{b}'}$, $\text{ }\!\!_{\mathfrak{b}'}y_{\mathfrak{b}}$
  together with isomorphisms of bimodules
  $\text{ }\!\!_{\mathfrak{b}}x_{\mathfrak{b}'}\circledast_{\mathfrak{b'}}\text{ }\!\!_{\mathfrak{b}'}y_{\mathfrak{b}}\cong \text{ }\!\!_{\mathfrak{b}}b_{\mathfrak{b}}$
  and
  $\text{ }\!\!_{\mathfrak{b}'}y_{\mathfrak{b}}\circledast_{\mathfrak{b}}\text{ }\!\!_{\mathfrak{b}}x_{\mathfrak{b}'}\cong \text{ }\!\!_{\mathfrak{b}'}b'\text{ }\!\!\!\!_{\mathfrak{b}'}$.
\end{proof}


\begin{thebibliography}{10}
  \bibitem{Bass1968}
  H.~Bass.
  \newblock {\em Algebraic K-theory}.
  \newblock Hyman Bass. W. A. Benjamin, 1968.
  
  \bibitem{Berger2019}
  C.~Berger and K.~Ratkovic.
  \newblock {G}abriel-{M}orita theory for excisive model categories.
  \newblock {\em Applied Categorical Structures}, 27(1):23--54, 2019.
  
  \bibitem{Borceux1994}
  F.~Borceux.
  \newblock {\em Handbook of categorical algebra. 2}, volume~51 of {\em Encyclopedia of Mathematics and its Applications}.
  \newblock Cambridge University Press, Cambridge, 1994.
  \newblock Categories and structures.
  
  \bibitem{Eilenberg1960}
  S.~Eilenberg.
  \newblock Abstract description of some basic functors.
  \newblock {\em The Journal of the Indian Mathematical Society. New Series}, 24:231--234 (1961), 1960.
  
  \bibitem{Hovey2009}
  M.~Hovey.
  \newblock The {E}ilenberg-{W}atts theorem in homotopical algebra.
  \newblock {\em arXiv e-prints}, page arXiv:0910.3842, Oct. 2009.
  
  \bibitem{Kelly2005}
  G.~M. Kelly.
  \newblock Basic concepts of enriched category theory.
  \newblock {\em Reprints in Theory and Applications of Categories}, (10):vi+137, 2005.
  \newblock Reprint of the 1982 original [Cambridge Univ. Press, Cambridge; MR0651714].
  
  \bibitem{Kock1970}
  A.~Kock.
  \newblock Monads on symmetric monoidal closed categories.
  \newblock {\em Archiv der Mathematik}, 21(1):1--10, 1970.
  
  \bibitem{Lee2023}
  J.~Lee.
  \newblock Tensor enriched categorical generalization of the {E}ilenberg-{W}atts theorem.
  \newblock {\em arXiv e-prints}, page arXiv:2302.11001v4, Feb. 2023.
  
  \bibitem{Morita1958}
  K.~Morita.
  \newblock Duality for modules and its applications to the theory of rings with minimum condition.
  \newblock {\em Science Reports of the Tokyo Kyoiku Daigaku, Section A}, 6(150):83--142, 1958.
  
  \bibitem{Nyman2008}
  A.~Nyman and S.~Smith.
  \newblock A generalization of {W}atts's theorem: Right exact functors on module categories.
  \newblock {\em Communications in Algebra}, 44, 07 2008.
  
  \bibitem{SegrtRatkovic2013}
  K.~Segrt~Ratkovic.
  \newblock Morita theory in enriched context.
  \newblock {\em arXiv e-prints}, page arXiv:1302.2774, Feb. 2013.
  
  \bibitem{Watts1960}
  C.~E. Watts.
  \newblock Intrinsic characterizations of some additive functors.
  \newblock {\em Proceedings of the American Mathematical Society}, 11:5--8, 1960.
  
\end{thebibliography}
\end{document}